\documentclass[oneside,11pt]{article}
\usepackage{amsmath}
\usepackage{amssymb}
\usepackage{bbm}
\usepackage[margin=1.5cm]{geometry}
\usepackage{hyperref}
\usepackage{tikz}
\usetikzlibrary{decorations.pathmorphing,arrows}
\usepackage{braket}

\newtheorem{thm}{Theorem}[section]

\newtheorem{lemma}[thm]{Lemma}

\newtheorem{theorem}[thm]{Theorem}
\newtheorem{remark}[thm]{Remark}
\newtheorem{proposition}[thm]{Proposition}
\newtheorem{definition}[thm]{Definition}

\newenvironment{proof}{{\bf Proof:}}{\hfill$\square$\vskip.5cm}
\newenvironment{proofof}{}{\hfill$\square$\vskip.5cm}
\newcommand{\R}{\mathbb{R}}
\newcommand{\N}{\mathbb{N}}
\newcommand{\C}{\mathbb{C}}

\newcommand{\Z}{\mathbb{Z}

}

\renewcommand{\Z}{{\mathbb{Z}}}

\title{Generating Function for Pinsky's Combinatorial Second Moment Formula for the Generalized Ulam Problem}

\date{January 19, 2023}

\author{
Samen Hossein\\
\small The Bronx High School of Science\\[-2pt]
\small 75 Bronx Science Bvd\\[-2pt]
\small Bronx, NY 10468\\[7pt]
Shannon Starr\footnote{ Partially funded by Simons Collaboration Grant}\\
\small Department of Mathematics\\[-2pt]
\small University of Alabama at Birmingham\\[-2pt]
\small University Hall, Room 4005\\[-2pt]
\small 1402 Tenth Avenue South\\[-2pt]
\small Birmingham, AL 35294-1241\\[-2pt]
\small \href{mailto:slstarr@uab.edu}{slstarr@uab.edu}}

\begin{document}

\maketitle

\abstract{Given a uniform random
permutation $\pi \in S_n$, let $Z_{n,k}$ be equal to the number of increasing subsequences of length $k$:
so
$Z_{n,k}=|\{(i_1,\dots,i_k) \in \Z^k\, :\, 1\leq i_1<\dots<i_k\leq n\, ,\ \pi_{i_1}<\dots<\pi_{i_k}\}|$.
In an important paper, Ross Pinsky proved $\mathbf{E}\big[Z_{n,k}^2\big]$
is equal to $\sum_{i} A(k-i,i)B(n,2k-i)$, where for any nonnegative integers $N$ and $j$, we have $B(N,j) =  \binom{N}{j}/j!$ and $A(N,j)$ is a particular nonnegative
integer, which Pinsky characterized in two different ways. One characterization of $A(N,j)$
involves the occupation time 
of the $x$-axis prior to a first return to the origin.
Using this, he proved a law of large numbers for the sequence $Z_{n,k_n}$ when $k_n=o(n^{2/5})$
as $n \to \infty$.
In a follow-up paper, he also proved the sequence $Z_{n,k_n}$ fails to obey a law of large numbers
when $1/k_n = o(1/n^{4/9})$ as $n \to \infty$.
Here, we return to his combinatorial formula for the the second moment of $Z_{n,k}$,
and we obtain a generating function for the $A(N,j)$ triangular array.
We are motivated by the hope of applying spin glass techniques to the well-known Ulam's problem
to see if this gives a new perspective.
}

\thispagestyle{empty}

\section{Introduction}

\subsection{Extended abstract}

In two papers Ross Pinsky considered a random variable $Z_{n,k}$ equal to the number of increasing subsequences of length $k$
in a uniform random permutation $\pi \in S_n$. 
That investigation may be called the ``generalized Ulam problem,'' because: 
if $L_n$ is the length of the longest increasing subsequence for a uniform random $\pi \in S_n$, then $\left(L_n< k\right)
\Leftrightarrow \left(Z_{n,k}=0\right)$. 
The generalized Ulam problem is to investigate the distribution $\mathbf{P}(Z_{n,k}\leq j)$,
possibly asymptotically as $n \to \infty$, for both variables $j$ and $k$.

Among other things, Pinsky proved $\mathbf{E}\big[Z_{n,k}^2\big]$
is equal to $\sum_{i} A(k-i,i)B(n,2k-i)$, where for any nonnegative integers $N$ and $j$, we have $B(N,j) =  C(N,j)/j!$.
Let us use $C(N,j)$ for the binomial
coefficient $(j! (N-j)!)^{-1} N!$, throughout.
The combinatorial array $A(N,j)$, which is a nonnegative
integer for each $N,j \in \{0,1,\dots\}$, is
equal to $4^{N} \mathbf{E}[Q_{N,j} \cdot R_N]$,
where $R_N$ and $Q_{N,j}$ are random variables defined on the sample space for a $d=2$ dimensional SRW.
If a sample path of the SRW up to time $2N$ is $(0,0)=(U_0,V_0),(U_1,V_1),\dots,(U_{2N},V_{2N})$, then 
$Q_{N,j}$ is the number of choices of $(t_0,\dots,t_{j-1}) \in \Z^{j}$
such that $0=t_0\leq t_1\leq \dots\leq t_{j-1}<2N$ satisfying $U_{t_1}=\dots=U_{t_j}=0$, and 
$R_N$
is the indicator for the event that $(U_{2N},V_{2N})=(0,0)$.
If the occupation time of the $y$-axis is $\tau_{N}$ then $Q_{N,j}$ equals $C(\tau_{N}+j-1,j)$
by the ``stars-and-stripes'' problem.

Pinsky obtained rigorous general bounds which are sufficient to determine the applicability of the Paley-Zygmund method to prove a weak law of large numbers for $Z_{n,k}$: $k=k_n$ is {\fontsize{6}{7}{${\mathcal{O}}$}}$(n^{2/5})$ as $n \to \infty$.
In a follow-up paper he also proved a regime where the weak law does not hold for $Z_{n,k}$:
$k=k_n$ is such that $1/k_n$ is  {\fontsize{6}{7}{${\mathcal{O}}$}}$(1/n^{4/9})$ as $n \to \infty$.
We calculate the generating function for his $A(N,j)$ sequence. Our method is 
the complex analysis approach, as in the excellent reference of Pemantle and Wilson.
We find a closed formula for $\alpha(z,w)=\sum_{N=0}^{\infty} \sum_{j=0}^{2N} A(N,j) z^N w^j$ 
which involves elliptic integrals of the third kind $\Pi$.
Recall P\'olya's
generating function formula for the return time of a random walk: $g(z) = \sum_{N} z^{2N} \mathbf{E}[R_N]$ equals $K(z)/\pi$, where $K$ is the complete elliptic integral of the first kind.
This satisfies $g(z)=\alpha(z,0)$.
We also give some motivation.

\subsection{Set-up}

{
Ulam's problem is a problem that involves a combination of tools and techniques
from the intersection of various subfields and specialities in mathematics, statistics  and physics.
It has been studied by many top researchers.
The main results are beautiful but diffficult.

The depth of the results related to Ulam's problem is well-known.
There are good reviews. 
See, for example, the review article of Aldous and Diaconis \cite{AldousDiaconis}. 
Many new and important results have been discovered since the publication of that particular paper.
One notable recent development is the directed landscape perspective \cite{Virag}.

We will state the actual problem known as Ulam's problem.
Let $\pi$ be a random permutation in $S_n$, chosen according to the uniform measure.
So for any permutation in $S_n$, the probability for $\pi$ to equal that one permutation is 
$1/n!$.
Ulam's problem involves increasing subsequences for $\pi$.
Given a cardinality $k$ subset of $\{1,\dots,n\}$, we may write the set as 
$\{i_1,\dots,i_k\}$ for 
\begin{equation}
1\leq i_1<\dots<i_k\leq n\, . 
\end{equation}
Then we say that 
$(i_1,\dots,i_k)$ is an increasing subsequence for $\pi$ if 
\begin{equation}
\pi_{i_1}\, <\, \dots\, <\, \pi_{i_k}\, .
\end{equation}
If we fix $(i_1,\dots,i_k)$ and vary over $\pi$, then the probability of the event,
``$(i_1,\dots,i_k)$
is an increasing subsequence for $\pi$,'' is equal to $1/k!$.
This is because if we are given $k$ numbers $\{j_1,\dots,j_k\}$
satisfying
\begin{equation}
\label{eq:jIncreasing}
1\, \leq\, j_1\, <\, \dots\, <\, j_k\, \leq\,  n\, ,
\end{equation}
and we are told that we have equality of the sets
\begin{equation}
\label{eq:jpiSubset}
	\{\pi_{i_1},\dots,\pi_{i_k}\}\, =\, \{j_1,\dots,j_k\}\, ,
\end{equation}
then there are still $k!$ possible permutations $\sigma=(\sigma_1,\dots,\sigma_k) \in S_k$ 
such that 
\begin{equation}
\label{eq:jpiOrdered}
	\forall r \in \{1,\dots,k\}\, ,\ \text{ we have }\ \pi_{i_r} = j_{\sigma_r}\, .
\end{equation}
But for $(i_1,\dots,i_r)$ to be an increasing subsequence for $\pi$, then we want $\pi_{i_r} = j_r$ for all $r$.
That means $\sigma = (1,2,\dots,k)$, the identity element of $S_k$, which is just one of the $k!$ possible permutations.

Ulam's problem is to determine the distribution of $L(n)$, the length of the longest increasing subsequence for $\pi$:
the largest $k$ such that there is at least one subsequence $(i_1,\dots,i_k)$ which is increasing for $\pi$.
Hammersley used Markov's bound and Kingman's subadditive theorem to garner information about $L(n)$
in the large $n$ limit.
Let $Z_{n,k}(\pi)$ denote the number of subsequences of cardinality $k$ which are increasing for $\pi$:
\begin{equation}
\label{eq:defZnk}
	Z_{n,k}(\pi)\, =\, \sum_{i_1,\dots,i_k=1}^{n} \prod_{r=1}^{n-1} \left(\mathbf{1}_{(0,\infty)}(i_{r+1}-i_r) \cdot \mathbf{1}_{(0,\infty)}(\pi_{i_{r+1}}-\pi_{i_r})\right)\, .
\end{equation}
Then $\sum_{i_1,\dots,i_k=1}^{n}  \prod_{r=1}^{n-1} \mathbf{1}_{(0,\infty)}(i_{r+1}-i_r)$ is equal to $n$-choose-$k$,
and for each $(i_1,\dots,i_k)$ we know 
\begin{equation}
	 \mathbf{P}(\pi_{i_1}<\dots<\pi_{i_k})\, =\, \frac{1}{k!}\, ,
\end{equation}
where the probability is over the uniform measure on $\pi \in S_n$ (with $(i_1,\dots,i_k)$ held fixed).
 Therefore,
\begin{equation}
\label{eq:Zmean}
	\mathbf{E}[Z_{n,k}]\, =\, \binom{n}{k} \cdot \frac{1}{k!}\, .
\end{equation}
Then, using Markov's inequality, it is easily seen that
\begin{equation}
\label{ineq:Markov}
	\mathbf{P}(Z_{n,k}>0)\, =\, \mathbf{P}(L(n)\geq k)\, \leq\, \mathbf{E}[Z_{n,k}]\, .
\end{equation}
Then, by Stirling's formula, writing $x$ for $k/\sqrt{n}$,
\begin{equation}
\label{eq:FirstMoment}
	\binom{n}{k} \cdot \frac{1}{k!}\,  \sim\, \frac{\exp\left(-2x n^{1/2}\, \ln\left(x/e\right) -\frac{1}{2}\, x^2+ \delta_n(x)\right)}{2 \pi x n^{1/2} (1 - x n^{-1/2})^{1/2}}\, ,
\end{equation}
in the limit $k \to \infty$, $n-k \to \infty$, where
\begin{equation}
	\delta_n(x)\, =\, x n^{1/2} \ln\left(1-x n^{-1/2}\right)- 
n\, \ln\left(e^{x n^{-1/2}}\left(1-x n^{-1/2}\right)\right)+\frac{x^2}{2}\, ,
\end{equation}
which would be $o(1)$ as long as $xn^{-1/2} = o(1)$.
In \cite{Hammersley}, Hammersley used the subadditive ergodic theorem to prove that 
\begin{equation}
	\liminf_{n \to \infty} n^{-1/2}\, \mathbf{E}[L_n]\, =\, \limsup_{n \to \infty} n^{-1/2}\, \mathbf{E}[L_n]\, =\, c\, ,
\end{equation}
for some number $c$ which {\em a priori} could have been in $(0,\infty]$.
But by equation (\ref{eq:FirstMoment}), he noted $c\leq e$.
In fact, an equally simple argument using the Erd\"os-Szekeres theorem
gives $c\geq 1/2$.
See, for example, Section 1.4 of J.M.~Steele's monograph \cite{Steele}.

The actual mean is approximately $c\, n^{1/2}$ for $c=2$,
by a result of Vershik and Kerov \cite{VershikKerov}, and a partial result of Logan and Shepp \cite{LoganShepp}.
Moreover, there is a
complete characterization of the leading-order fluctuations for large $n$, according to the Baik-Deift-Johannson theorem
\cite{BDJ}.
And new results are being continually discovered for the leading-order fluctuations, such as the ``directed landscape'' picture \cite{Virag}.

\subsection{Higher moments for the generalized Ulam's problem}
Suppose one seeks information for the distribution of $Z_{n,k}$. The event that $Z_{n,k}>0$
is equal to the event that $L_n\geq k$.
Thus, finding the distribution of $Z_{n,k}$ generalizes Ulam's problem, so it can be called the
``generalized Ulam problem.''
The easy bounds above for the event $Z_{n,k}>0$ follows from Markov's bounds.
But more precise bounds could in principle be obtained if we knew higher moments of $Z_{n,k}$.

In principle, the Bonferroni inequalities would give sharp bounds if one had sufficient information
on the moments. A good reference for this  topic is the combinatorics textbook
of Charalambides \cite{Charalambides}.
In Appendix \ref{sec:Bon} we will review this.
The first author, Samen Hossein, has an earlier unpublished article, submitted to a private organization, which treats this topic in more detail. That is available upon 
request. But, we will summarize it in the appendix.
Because of all this, it is interesting to know the distribution of $Z_{n,k}$, beyond just its mean.

In \cite{Pinsky1}, Pinsky investigated the second moment.
He obtained an exact combinatorial formula.
He then rigorously analyzed the formula obtaining upper and lower bounds of matching exponential growth
rates  although with different constant
multipliers.
\begin{proposition}[Pinsky's First Result]
If there is a sequence $k_n$ such that $\lim_{n \to \infty} k_n /n^{2/5}=0$ then 
\begin{equation}
	\frac{\mathbf{E}[Z_{n,k_n}^2]}{\big(\mathbf{E}[Z_{n,k_n}]\big)^2} - 1\, =\, \mathcal{O}\left(\frac{k_n^{5/2}}{n}\right)\, .
\end{equation}
If instead $\liminf_{n \to \infty} k_n/n^{2/5} > 0$ and $\limsup_{n\to\infty} k_n/n^{2/5} < \infty$ then 
\begin{equation}
	0\, <\, \liminf_{n \to \infty} \frac{\mathbf{E}[Z_{n,k_n}^2]}{\big(\mathbf{E}[Z_{n,k_n}]\big)^2}\, \leq\, 
	\limsup_{n \to \infty} \frac{\mathbf{E}[Z_{n,k_n}^2]}{\big(\mathbf{E}[Z_{n,k_n}]\big)^2}\, < \infty\, .
\end{equation}
If $\lim_{n \to \infty} n^{2/5}/k_n = 0$ then 
\begin{equation}
	\frac{\big(\mathbf{E}[Z_{n,k_n}]\big)^2}{\mathbf{E}[Z_{n,k_n}^2]}\, =\, \mathcal{O}\left(\frac{n}{k_n^{5/2}}\right)\, .
\end{equation}
\end{proposition}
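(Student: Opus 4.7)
The plan is to work directly from Pinsky's combinatorial identity $\mathbf{E}[Z_{n,k}^2]=\sum_i A(k-i,i)B(n,2k-i)$, which already decomposes the second moment by the overlap size $i$ of the two increasing subsequences. After normalizing by $(\mathbf{E}[Z_{n,k}])^2=\binom{n}{k}^2/(k!)^2$, the proposition reduces to sharp asymptotics for each ratio $\rho_{n,k}(i):=A(k-i,i)B(n,2k-i)(k!)^2/\binom{n}{k}^2$. A convenient algebraic identity to use is $\binom{n}{2k-i}\binom{2k-i}{k}=\binom{n}{k}\binom{n-k}{k-i}$, which isolates the $n$-dependence of $\rho_{n,k}(i)$ into the clean ratio $\binom{n-k}{k-i}/\binom{n}{k}$ times $k$-and-$i$-dependent combinatorial factors, reducing the remaining work to $n$-independent asymptotics of $A(k-i,i)$.

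The next step is to estimate $A(N,j)=4^N\mathbf{E}[Q_{N,j}R_N]$ using the random-walk representation. Combine the stars-and-bars formula $Q_{N,j}=C(\tau_N+j-1,j)$ with two asymptotic inputs: (i) the two-dimensional local central limit theorem, giving $\mathbf{E}[R_N]\asymp 1/N$, and (ii) the scaling $\tau_N\asymp \sqrt{N}$ of the $y$-axis occupation time conditioned on the return event $\{R_N=1\}$, where $\tau_N/\sqrt{N}$ is known to converge to a nondegenerate variable related to Brownian local time. Together these yield bounds of the form $A(N,j)\asymp 4^N N^{(j-2)/2}/j!$ (times bounded constants) for bounded $j$, with delicate tail estimates needed when $j$ grows with $N$. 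Substituting $N=k-i$ and matching against the binomial cancellations in $\rho_{n,k}(i)$, a careful book-keeping reveals $\rho_{n,k}(i)\asymp c_i(k^{5/2}/n)^i$ for bounded $i$; the exponent $5/2$ assembles from a $\sqrt{k}$ factor (from $\tau_N$-scaling in $Q_{N,j}$) and a $k^2$ factor (from Stirling approximations of the binomial ratios), divided by the $n$-denominator inherited from $\binom{n-k}{k-i}/\binom{n}{k}$.

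With these pointwise asymptotics, the three regimes follow by summing in $i$: when $k^{5/2}/n\to 0$, the series is rapidly convergent and the leading deviation from $1$ is $\Theta(k^{5/2}/n)$; when $k\asymp n^{2/5}$, each $\rho_{n,k}(i)$ converges to a positive finite limit and the sum is sandwiched between positive finite constants; and when $k^{5/2}/n\to\infty$, the single term $\rho_{n,k}(1)\asymp k^{5/2}/n$ already provides the reciprocal bound that the third statement demands. The main obstacle I expect is the uniform-in-$i$ control of $\mathbf{E}[Q_{N,j}R_N]$ in the regime where $j$ is comparable to $\sqrt{N}$, where the Gaussian concentration of $\tau_N/\sqrt{N}$ breaks down and must be replaced by sharp large-deviation tails for the occupation time conditioned on $R_N=1$. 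Pinsky in \cite{Pinsky1} bypasses this by combinatorial bounds tailored directly to the $A(N,j)$ array; the generating function $\alpha(z,w)$ developed in the present paper is motivated in part by the hope of recovering such asymptotics via saddle-point analysis of a closed-form expression.
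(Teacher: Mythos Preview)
The paper you are reading does not prove this proposition at all: it is quoted as Pinsky's result from \cite{Pinsky1} and used as motivation, with no argument supplied here beyond the attribution. So there is no ``paper's own proof'' to compare your attempt against.

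That said, your outline is a reasonable reconstruction of the skeleton of Pinsky's original argument: decompose by overlap $i$, reduce to asymptotics of $A(N,j)$, and interpret the latter via the $y$-axis occupation time $\tau_N$ of the planar walk conditioned on return. A few remarks. First, the normalization in the abstract is a typo: as the body of the paper makes clear (see equation (\ref{eq:PrePolya})), one has $A(N,j)=4^{2N}\mathbf{E}[Q_{N,j}R_N]$, not $4^N\mathbf{E}[Q_{N,j}R_N]$; your estimate $A(N,j)\asymp 4^N N^{(j-2)/2}/j!$ inherits this slip and should read $16^N$ in place of $4^N$. Second, your claimed book-keeping identity $\rho_{n,k}(i)\asymp c_i(k^{5/2}/n)^i$ is stated but not derived; the exponent $5/2$ does indeed emerge, but getting the constants $c_i$ summable (and in the critical regime, getting matching upper and lower bounds) is exactly where Pinsky's work lies, and you correctly flag the uniform-in-$i$ control of $\mathbf{E}[Q_{N,j}R_N]$ for $j\asymp\sqrt{N}$ as the real obstacle. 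Third, your final paragraph is not part of a proof but rather an editorial comment about the motivation of the present paper; it does not belong in a proof of the proposition. In short: this is a plausible plan, not a proof, and the proof you are planning is Pinsky's, not the authors'.
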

Using this, he could determine exactly the criteria for the Paley-Zygmund second moment 
method to guarantee that $Z_{n,k}$ satisfies a weak law of large numbers type result.
Namely, that condition is that $\lim_{n \to \infty} k_n/n^{2/5}=0$.

Pinsky noted that just because the Paley-Zygmund method does not imply a weak law, that does
not mean that a weak law may not still be true, since it does not necessarily require vanishing of the variance
of the rescaled quantity.
So in \cite{Pinsky2}, Pinsky proved a threshold to be able to disprove a weak law for $Z_{n,k}$.
He showed
\begin{equation}
\forall \epsilon>0\, ,\ \lim_{n \to \infty} \frac{1}{n!} 
\left|\left\{\pi \in S_n\, :\, \left|\frac{Z_{n,k_n}(\pi)}{\mathbf{E}[Z_{n,k_n}]}
-1\right|<\epsilon\right\}\right|\, =\, 1\qquad \Rightarrow\qquad
\liminf_{n \to \infty} \frac{n^{4/9}}{k_n}\, >\, 0\, .
\end{equation}
He also  proved interesting equivalent formulations of his questions, and he has 
conjectures which remain as fascinating open problems.

We may follow the paradigm of iterating a random optimization problem.
See for example the {\em objective method} of Aldous and Steele \cite{AldousSteele}.
Especially, see the section on the ``Stalking horse,'' and distributional identities.
For the higher moments of the generalized Ulam problem,
the stalking horse is the covariance or even just the mixed moments
$$
\mathcal{Q}_n(k,\ell)\, =\, \mathbf{E}[Z_{n,k} Z_{n,\ell}]\, .
$$
This seems to be the first step towards iterating to go from the second moment to higher moments.
For example, for the third moment $\mathbf{E}[Z_{n,k} (Z_{n,k})^2]$, we may want to start
with $Q_n(k,2k)$. More precisely, the product $A(k-i,i) \cdot B(n,2k-i)$ gives the expected
number of pairs of increasing subsequence of $\pi \in S_n$, where each of the two subsequences
is of length $k$, and the number of points in the intersection is $i$. As before, $B(N,j)= C(N,j)/j!$
where $C(N,j)$ is the usual binomial coefficient $N! / (j! (N-j)!)$, throughout this article.

We would want to calculate the expectation for the pair of increasing subsequences of $\pi \in S_n$
where the first subsequence has length $k$ and the second subsequence has length $\ell$,
and the intersection has cardinality $i$.
Even though this seems like a clear goal, we will not pursue that goal in this article.
We hope to carry it out in a future article, either together or perhaps in a different collaboration
with one or both of the current authors.
Or perhaps, another researcher will do it first.
We hope that this is okay: we believe that the results already contained in the present article
are of interest to some readers, even without that further work (which is not done yet).
In this article, we just study the 2nd moment.

\subsection{Motivation for exact solutions}

In theoretical physics and in the mathematical physics of spin glasses, one is interested in the
precise form of the second moment and all higher moments, at least to leading order asymptotically
as $n \to \infty$.
Even though 
$\mathbf{E}[Z_{n,k}^2]/(\mathbf{E}[Z_{n,k}])^2$ may be too large to apply the 
Paley-Zygmund second moment method,
one may be interested in $\ln\left(\mathbf{E}[Z_{n,k}^2]\right)/\Big(2\ln\left(\mathbf{E}[Z_{n,k}]\right)\Big)$.
For example, in spin-glass theory, physicists moved from formulas for the moments
to predictions about the actual distribution.
In Appendix \ref{sec:SK}, we review this briefly.

An excellent reference for the limitations of classical mathematical tools when trying to apply
them to spin glass techniques was written by van Hemmen and Palmer \cite{vanHemmenPalmer}.
However, the spin glass techniques have proved effective in illuminating the Sherrington-Kirkpatrick
mean field spin glass problem, and may be useful beyond that.
See for example, the classical physics textbook \cite{MPV}.
The first rigorous proof of Parisi's 
ansatz for the SK model, given by Talagrand, seemed to proceed by very different techniques \cite{Talagrand},
with a starting point the beautiful approach of Guerra \cite{Guerra} and Guerra and Toninelli \cite{GuerraToninelli}.

In Appendix \ref{sec:SK} we give a rapid review of some points of that development.
Our purpose in the appendix is to demonstrate the role of calculating higher moments in the physicists' ``replica 
symmetry breaking'' approach.
In the present paper, we are interested in the higher moments of $Z_{n,k}$, starting with the second moment.
We call this the generalized Ulam problem.
Technically, the generalized Ulam problem should be the problem of stating as much as possible
about the distribution of $Z_{n,k}$ for the doubly-indexed sequence of numbers for $n$ and $k$.
But we are most hopeful to be able to calculate the positive integer moments, in some asymptotic sense.
See Section \ref{sec:Outlook} for a discussion of the relation between the 2nd moment and higher moments.

Another motivation involves Pinsky's second characterization of $A(N,k)$ in terms of the occupation
time of a $d=2$ dimensional simple random walk, where the set for the occupation time is the $y$-axis.
We discuss this more right after Remark \ref{rem:PinskySRW}.
P\'olya calculated the generating function for returns to the origin in the $d=2$ dimensional
simple random walk, in terms of the complete elliptic integral of the first kind.
The main result of our paper is related to that, in so much as it is a direct generalization.
But our formula is more complicated, so that it involves complete elliptic integrals of the third kind (which is 
the most general setting for complete elliptic integrals).

\subsection{Outline for the remainder of the paper}

The moments of the generalized Ulam problem appear to have been investigated primarily by Ross Pinsky.
We begin, in the next section, by stating Pinsky's precise combinatorial formula for the second moment.
Then we calculate a generating function for the quantities that appear in Pinsky's decomposition,
with hopes of returning later and extracting the precise asymptotics from the generating function.
For us, the calculation of the generating function was already difficult.

In Section 2, we present Pinsky's combinatorial second moment formula, and we set-up the moment generating function.
In Section 3, we present the main tools for calculating the moment generating function.
These are Cauchy's integral formula and the diagonal method, which allows one to specialize multivariate generating
functions by integrating over certain contours to restrict the terms in the double sum (or multiple sums) occuring in the series.
In Section 4 we give formulas for the generating function, with several different equivalent formulas being presented.
A number of technical lemmas and side-issues are presented in various appendices.

}

\section{Pinsky's combinatorial  second moment formula}

{
In his paper \cite{Pinsky1}, Pinsky calculated
\begin{equation}
	\mathbf{E}\left[Z_{n,k}^2\right]\, =\, \sum_{i=0}^{k}A(k-i,i)\, B(n,2k-i),
\end{equation}
where for any $N,j \in \{0,1,\dots\}$, we have
\begin{equation}
	B(N,j)\, =\, \binom{N}{j} \cdot \frac{1}{j!}\, ,
\end{equation}
and 
\begin{equation}
	A(N,j)\, =\, \sum_{\substack{\ell_0,\dots,\ell_j \in \{0,1,\dots\}\\ \ell_0+\dots+\ell_j=N}}\
	\sum_{\substack{m_0,\dots,m_j \in \{0,1,\dots\}\\ m_0+\dots+m_j=N}}\
	\prod_{r=0}^{j} \left(\binom{\ell_r+m_r}{\ell_r,m_r}\right)^2\, .
\end{equation}
See Pinsky's Proposition 2. We will not reproduce his proof, but we mention that his argument is elegant.

Let us define the new quantity
\begin{equation}
	K(L,M,j)\, =\, \sum_{\substack{\ell_0,\dots,\ell_j \in \{0,1,\dots\}\\ \ell_0+\dots+\ell_j=L}}\
	\sum_{\substack{m_0,\dots,m_j \in \{0,1,\dots\}\\ m_0+\dots+m_j=M}}\
	\prod_{r=0}^{j} \left(\binom{\ell_r+m_r}{\ell_r,m_r}\right)^2\, .
\end{equation}
Then we have $A(N,j) = K(N,N,j)$.
Let us define the generating function
\begin{equation}
	\kappa(w,x,y)\, =\, \sum_{j=0}^{\infty} \sum_{L=0}^{\infty} \sum_{M=0}^{\infty} w^j x^L y^M K(L,M,j)\, ,
\end{equation}
for  $x,y,w \in \C$.
Then for $w,x,y \in [0,\infty)$, we have
\begin{equation}
\label{eq:betaSecondFormula}
	\kappa(w,x,y)\, =\, \sum_{j=0}^{\infty}  w^j \left(\sum_{\ell=0}^{\infty}\ \sum_{m=0}^{\infty} x^{\ell} y^{m} \left(\binom{\ell+m}{\ell,m}\right)^2\right)^{j+1}\, ,
\end{equation}
by the Tonelli theorem. (See for example, Section 2.5 of Folland \cite{Folland}.) So the left-hand-side
diverges if and only if the right-hand-side diverges. If neither diverges, the numerical equality 
is valid.
Also, if neither diverges for a choice of $x=x_1$, $y=y_1$, $w=w_1$ for $x_1,y_1,w_1 \in [0,\infty)$, then for any complex numbers $x,y,w \in \C$ such that 
$|x|\leq x_1$, $|y|\leq y_1$ and $|z|\leq z_1$, the two sides of (\ref{eq:betaSecondFormula}) are convergent 
and there is equality of the two sides  by Fubini's theorem.
Given $|x|,|y| \in [0,1/4)$, let us define
\begin{equation}
\label{eq:gamma2def}
	\varkappa^{(2)}(x,y)\, =\, \sum_{\ell=0}^{\infty} \sum_{m=0}^{\infty} x^\ell y^m \left(\binom{\ell+m}{\ell,m}\right)^2\, ,
\end{equation}
so that, according to (\ref{eq:betaSecondFormula}),
\begin{equation}
\label{eq:betaSecondFormula2}
	\kappa(w,x,y)\, =\, \sum_{j=0}^{\infty}  w^j \left(\varkappa^{(2)}(x,y)\right)^{j+1}\, .
\end{equation}
The reason we have denoted the function in (\ref{eq:gamma2def}) as $\varkappa^{(2)}$, with the superscript index $2$, is that
it bears a relation to a more famous generating function, which we call $\varkappa^{(1)}$:
\begin{equation}
	\forall x,y \in \left[0\, ,\ 1/2\right)\, ,\ \text{ we define }\ \varkappa^{(1)}(x,y)\, =\, \sum_{\ell,m=0}^{\infty} \binom{\ell+m}{\ell,m} x^{\ell} y^m\, ,
\end{equation}
using the multinomial notation for the binomial coefficient 
$\binom{\ell+m}{\ell,m} = \frac{(\ell+m)!}{\ell!\, m!}$.
We have 
\begin{equation}
	\varkappa^{(1)}(x,y)\, =\, \sum_{n=0}^{\infty} (x+y)^n\, =\, \frac{1}{1-(x+y)}\, .
\end{equation}
The function that we have called $\varkappa^{(1)}$ is among the first examples in many textbooks on generating functions. See, for example, Pemantle and Wilson's Example 2.2.2 in \cite{PemantleWilson}.

The relation between $\varkappa^{(1)}$ and $\varkappa^{(2)}$ is as follows.
Taking the product of $\varkappa^{(1)}(x,y)$ and $\varkappa^{(1)}(u,v)$ and then multiplying $x$ by $\xi$ and $u$ by $\xi^{-1}$,
and multiplying $y$ by $\zeta$ and $v$ by $\zeta^{-1}$,  we have
\begin{equation}
	\varkappa^{(1)}(x\xi,y\zeta) \cdot \varkappa^{(1)}\left(\frac{u}{\xi}\, ,\ \frac{v}{\zeta}\right)\, =\, \sum_{\ell,m,k,n=0}^{\infty} \binom{\ell+m}{\ell,m} \binom{k+n}{k,n}
	x^\ell y^m u^k v^n \xi^{\ell-k} \zeta^{m-n}\, .
\end{equation}
Therefore, integrating over unit circles, 
and using the residue method:
\begin{equation}
\label{eq:gamma2contour}
	\varkappa^{(2)}(xu,yv)\, =\, \oint_{\mathcal{C}(0;1)} \oint_{\mathcal{C}(0;1)} \varkappa^{(1)}(x\xi,y\zeta) \cdot \varkappa^{(1)}\left(\frac{u}{\xi}\, ,\ \frac{v}{\zeta}\right)\, \frac{d\xi}{2\pi i \xi}\, \cdot
	\frac{d\zeta}{2\pi i \zeta}\, .
\end{equation}
\begin{definition}
Let $\mathcal{C}(z_0;r)$ denote the circle in the complex plane $\{z\, :\, |z-z_0|=r\}$,
which may be parametrized as $z = z_0 + r e^{i\theta}$ for $-\pi<\theta\leq \pi$.
\end{definition}
One can perform the integral to calculate the following.
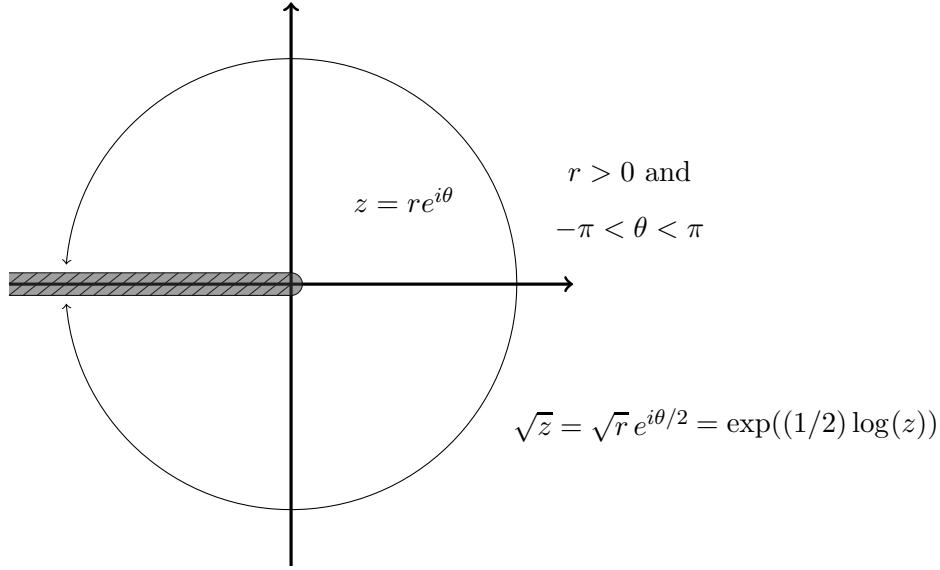
\begin{figure}
\begin{center}
\begin{tikzpicture}[xscale=1.5,yscale=1.5]
	\draw[very thick,->] (-2.5,0) -- (2.5,0);
	\draw[very thick,->] (0,-2.5) -- (0,2.5);
	\foreach \t in {-2.5,-2.375,-2.25,...,-0.25}{
		\draw[thin] (\t,-0.1) -- +(0.25,0.2);}
	\draw[thin] (-0.125,-0.1) -- (0,0);
	\draw[thin] (-2.5,0) -- (-2.375,0.1);
	\draw (-2.5,0.1) -- (0,0.1) arc (90:-90:0.1cm) --  (-2.5,-0.1);
	\fill[black!75!white, opacity=0.5] (-2.5,0.1) -- (0,0.1) arc (90:-90:0.1cm) --  (-2.5,-0.1) -- cycle;
	\draw[<->] (-175:2cm) arc (-175:175:2cm);
	\draw (1,0.75) node[] {$z=r e^{i\theta}$};
	\draw (3,1) node[] {$r>0$ and};
	\draw (3,0.5) node[] {$-\pi<\theta<\pi$};
	\draw (2.75,-1.25) node[] {$\sqrt{z}=\sqrt{r}\, e^{i\theta/2}$};
	\draw (3.5,-1.25) node[right] {$=\exp((1/2) \log(z))$};
\end{tikzpicture}
\caption{\textsl{Schematic for the standard square-root function on the open domain of $\C$ with branch cut along the non-positive
real axis: $\mathcal{D}_1 = \{r e^{i\theta}\, :\, r>0\, ,\ -\pi<\theta<\pi\}$. For $z \in \mathcal{D}_1$
we use the formula
$\sqrt{z} = \exp((1/2) \int_0^1 (z-1)(1+t(z-1))^{-1}\, dt)$. \label{fig:sqrt}
}}
\end{center}
\end{figure}
\begin{lemma}
\label{lem:gamma2}
Given $|x|,|y| \in [0,1/4)$, 
the series in (\ref{eq:gamma2def}) converges and 
\begin{equation}
\label{eq:gamma2formula}
	\varkappa^{(2)}(x,y)\, =\, \left(\sqrt{\left(1-\left(x+y\right)\right)^2 - 4 xy}\right)^{-1}\, .
\end{equation}
\end{lemma}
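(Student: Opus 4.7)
The plan is to evaluate the double contour integral in equation (\ref{eq:gamma2contour}) by two applications of the residue theorem, using the closed form $\varkappa^{(1)}(a,b)=1/(1-a-b)$. First I would verify absolute convergence of (\ref{eq:gamma2def}) on the polydisk $|x|,|y|<1/4$: the bound $\binom{\ell+m}{\ell}\leq 2^{\ell+m}$ gives $\binom{\ell+m}{\ell}^2\leq 4^{\ell+m}$, so the series is dominated term-by-term by $\sum(4|x|)^{\ell}(4|y|)^{m}$. Both sides of (\ref{eq:gamma2formula}) are then holomorphic on this polydisk (the quantity $(1-x-y)^2-4xy$ is bounded below in modulus by $(1-|x|-|y|)^2-4|x||y|>0$ there), so it suffices to prove the identity for $x,y\in(0,1/4)$ and extend by analytic continuation, using the standard square-root branch of Figure \ref{fig:sqrt}.

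For such real $x,y$, I would apply (\ref{eq:gamma2contour}) with the splitting $x_1=u_1=\sqrt{x}$ and $y_1=v_1=\sqrt{y}$; then $x_1+y_1<1$ and $u_1+v_1<1$, so both $\varkappa^{(1)}(x_1\xi,y_1\zeta)$ and $\varkappa^{(1)}(u_1/\xi,v_1/\zeta)$ converge for every $\xi,\zeta$ on the unit circle. The inner integral in $\xi$ is
$$I(\zeta)\,=\,\oint_{\mathcal{C}(0;1)}\frac{1}{\xi\,(1-x_1\xi-y_1\zeta)\,(1-u_1/\xi-v_1/\zeta)}\,\frac{d\xi}{2\pi i}.$$
Absorbing the $1/\xi$ converts the second denominator into $\xi(1-v_1/\zeta)-u_1$, so the integrand is a rational function of $\xi$ with two finite simple poles: $\xi_{\mathrm{in}}=u_1\zeta/(\zeta-v_1)$, which lies inside $\mathcal{C}(0;1)$ for $u_1$ small, and $\xi_{\mathrm{out}}=(1-y_1\zeta)/x_1$, which lies outside for $x_1$ small. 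Taking the single residue at $\xi_{\mathrm{in}}$ and simplifying, I expect the answer $I(\zeta)=-\zeta/Q(\zeta)$ with $Q(\zeta)=y_1\zeta^2-(1+y_1v_1-x_1u_1)\zeta+v_1$.

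The factor $\zeta$ in the numerator then cancels the $1/\zeta$ from the outer measure, reducing the outer integral to $-\oint d\zeta/(2\pi i\,Q(\zeta))$. Writing $x=x_1u_1$ and $y=y_1v_1$, the discriminant of $Q$ is $\Delta=(1+y-x)^2-4y=(1-x-y)^2-4xy$, a short algebraic identity. Vieta's relation $\zeta_+\zeta_-=v_1/y_1$ shows that one root of $Q$ is small (inside $\mathcal{C}(0;1)$) and the other large (outside), so the single residue gives the outer integral as $1/\sqrt{\Delta}$, which is exactly (\ref{eq:gamma2formula}).

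The main obstacle I anticipate is the bookkeeping in the inner residue computation: correctly identifying which pole lies inside $\mathcal{C}(0;1)$ under the doubled parametrization $x=x_1u_1$, $y=y_1v_1$, and simplifying the resulting rational expression down to the clean quadratic $Q(\zeta)$ so that the $\zeta$ from the numerator cancels against the $1/\zeta$ in the outer measure. The remaining ingredients — the convergence bound, Fubini for swapping sums and integrals, the residue theorem, the discriminant identity, and the analytic continuation — are routine once the inner residue is in hand.
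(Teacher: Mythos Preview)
Your approach is correct and essentially matches the contour-integral derivation the paper relegates to Appendix~\ref{sec:gamma2}: two residue computations, one in each variable, reducing to a single quadratic whose discriminant is $(1-x-y)^2-4xy$. The only cosmetic differences are that you integrate in $\xi$ first (the paper does $\zeta$ first) and you use the symmetric splitting $x_1=u_1=\sqrt{x}$, $y_1=v_1=\sqrt{y}$ rather than the paper's $x\mapsto x^2$, $y\mapsto y^2$ relabeling at the end. Your pole location checks (``for $u_1$ small'', ``for $x_1$ small'') actually hold throughout $x,y\in(0,1/4)$ once you note $|\zeta-v_1|\geq 1-|v_1|>1/2$ on $|\zeta|=1$, so no extra shrinking is needed.

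The paper's \emph{main} proof in the body, however, takes a different route: it expands the right-hand side of (\ref{eq:gamma2formula}) by Newton's binomial series, reduces the claim to the multinomial identity $\sum_n \binom{\ell+m}{n,n,\ell-n,m-n}=\binom{\ell+m}{\ell}^2$, and proves that identity by a bijective counting argument (choosing two independent $\ell$-subsets of an $(\ell+m)$-set and classifying by the size of their intersection). The authors explicitly present the combinatorial proof as preferable once the closed form is known, positioning the residue calculation as the tool for \emph{discovering} the formula. Your approach has the advantage of being self-contained and not requiring one to guess the target; the combinatorial proof is shorter and more transparent once the identity (\ref{eq:gamma2combinatorial}) has been isolated.
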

\begin{remark}
\label{rem:sqrt}
In our paper,
we use the standard branch cut for the natural logarithm along the non-positive
real axis. So the domain is $\mathcal{D}_1 = \{r e^{i\theta}\, :\, r>0\, ,\ -\pi<\theta<\pi\}$
and for each $z \in \mathcal{D}_1$, the formula is 
$\log(z) = \int_0^1 (z-1)(1+(z-1)t)^{-1}\, dt$. Then, for each $z \in \mathcal{D}_1$, we use $\sqrt{z} = \exp((1/2) \log(z))$.
See Figure \ref{fig:sqrt} for the schematic.
In the decomposition of $x=r e^{i\theta}$, we refer to the term $e^{i\theta}$ as the ``phase.''
Some complex analysis textbooks call this the argument as in Section 3.4 of \cite{Ahlfors}.
But we reserve the term ``argument'' for the input of other functions. 
For example, given a term of the form $\sqrt{f(x)}$ we will refer to $f(x)$ as the ``argument of the square-root function.''
\end{remark}

The result of the calculation in the lemma is an instance related to the fact that the diagonal of a rational bivariate generating function
is an algebraic univariate generating function. (And it is generically not rational.)
This is proved in Section 2.4 of Pemantle and Wilson,
credited to Furstenberg and to Hautus and Klarner.

The residue method may be used to derive formulas such as this, when they are unknown.
This is opposed to proof methods for conjectural formulas, which are sometimes more automatic, such as induction.
For this example, we do the contour integral as an exercise in Appendix \ref{sec:gamma2}.
But once the formula is ``known,'' whether or not it is {\em proved},
there are often simpler combinatorial methods that prove the formula.
The methods are often simpler because they bypass the need for contour integrals.
Here we present such an alternative proof.

\begin{proofof}{\bf Proof of Lemma \ref{lem:gamma2}:}
Let the right-hand-side of equation (\ref{eq:gamma2formula}) be denoted by $g^{(2)}(x,y)$. Then we may rewrite
\begin{equation}
	g^{(2)}(x,y)\, =\, \frac{1}{1-(x+y)}\, \left(1- \frac{4xy}{\left(1-(x+y)\right)^2}\right)^{-1/2}\, .
\end{equation}
Therefore, by Newton's version of the binomial theorem, this gives
\begin{align}
	g^{(2)}(x,y)\, &=\, \frac{1}{1-(x+y)}\, \sum_{n=0}^{\infty} \frac{(-1/2)_n}{n!} \frac{(-1)^n 4^n x^n y^n}{\left(1-(x+y)\right)^{2n}}\\
	&=\, \sum_{n,k=0}^{\infty} \frac{(-1/2)_n}{n!} \cdot \frac{(-2n-1)_k}{k!} (-1)^{n+k} 4^n x^n y^n (x+y)^k\, ,
\end{align}
where the Pochhammer symbol, falling factorial, is 
\begin{equation}
\label{eq:PochhammerFallingFactorial}
	(z)_n\, =\, z(z-1)(z-2)\cdots (z-n+1)\, =\, \prod_{k=0}^{n-1} (z-k)\, .
\end{equation}
Then, by the regular binomial formula, we have
\begin{equation}
	g^{(2)}(x,y)\, 
	=\, \sum_{n,k=0}^{\infty} \sum_{j=0}^{k} \frac{(-1/2)_n}{n!} \cdot \frac{(-2n-1)_k}{k!} \binom{k}{j} (-1)^{n+k} 4^n x^{n+j} y^{n+k-j}\, .
\end{equation}
Therefore, the lemma will be proved if we check that
\begin{equation}
\label{eq:LeftMostBridge}
	\sum_{n=0}^{\infty} \frac{(-1/2)_n}{n!} \cdot \frac{(-2n-1)_k}{k!} \binom{k}{j} (-1)^{n+k} 4^n \Bigg|_{\substack{j=\ell-n\\k-j=m-n}}\,
	=\, \left(\binom{\ell+m}{\ell,m}\right)^2\, ,
\end{equation}
for each $\ell,m \in \{0,1,\dots\}$.
But using well-known combinatorial formulas for the Pochhammer symbols of $-1/2$ and of negative integers, which we review in Appendix \ref{sec:Poc}, this is equivalent to checking
the multinomial identity:
\begin{equation}
\label{eq:gamma2combinatorial}
	\sum_{n=0}^{\infty} \binom{\ell+m}{n,n,\ell-n,m-n}\, =\, \left(\binom{\ell+m}{\ell,m}\right)^2\, .
\end{equation}
(We use the standard convention that if one of the indices in the subscript of a binomial or multinomial coefficient is outside the range
of $0$ to the superscript index, then that coefficient equals 0 by definition.)
Equation (\ref{eq:gamma2combinatorial}) may be proved as follows.
The key is to use {\em distinguishable permutations}.

Draw $\ell+m$ black dots on the number line from $1$ to $\ell+m$. Above each dot place a circle with either the top or bottom shaded in, such that if 
$a$ is the subset of those $r\in\{1,\dots,\ell+m\}$
whose circle above it has the top half shaded, then $|a|=\ell$ and $|a^{\complement}|=m$.
Below each black dot, place a circle with either the left or right shaded in, such that if $\mathcal{B}$ is the set of 
those $r\in\{1,\dots,\ell+m\}$
whose circle below it has the left half shaded, then $|\mathcal{B}|=m$ and $|\mathcal{B}^{\complement}|=\ell$.
We show a particular example of this mapping in the following displayed equation:
\begin{equation}
\label{eq:CombinatorialExample}
\begin{minipage}{13cm}	\begin{tikzpicture}[xscale=0.95,yscale=0.95]
		\draw[very thick,->] (0.5,0) -- (5.5,0);
		\foreach \i in {1,...,5}{
			\fill (\i,0) circle (3pt);}
		\foreach \i in {1,3,5}{
			\draw (\i,0.5) circle (3pt);
			\fill (\i,0.5) -- +(0:3pt) arc (0:180:3pt) -- cycle;}
		\foreach \i in {2,4}{
			\draw (\i,0.5) circle (3pt);
			\fill (\i,0.5) -- +(180:3pt) arc (180:360:3pt) -- cycle;}
		\foreach \i in {2,3,4}{
			\draw (\i,-0.5) circle (3pt);
			\fill (\i,-0.5) -- +(-90:3pt) arc (-90:90:3pt) -- cycle;}
		\foreach \i in {1,5}{
			\draw (\i,-0.5) circle (3pt);
			\fill (\i,-0.5) -- +(90:3pt) arc (90:270:3pt) -- cycle;}
	\begin{scope}[xshift=8.5cm]
		\draw[very thick,->] (0.5,0) -- (5.5,0);
		\foreach \i in {1,...,5}{
			\fill (\i,0) circle (3pt);}
		\foreach \i in {1,5}{
			\draw (\i,0.5) circle (3pt);
			\fill (\i,0.5) -- +(90:3pt) arc (90:180:3pt) -- cycle;}
		\foreach \i in {2,4}{
			\draw (\i,0.5) circle (3pt);
			\fill (\i,0.5) -- +(270:3pt) arc (270:360:3pt) -- cycle;}
		\foreach \i in {3}{
			\draw (\i,0.5) circle (3pt);
			\fill (\i,0.5) -- +(0:3pt) arc (0:90:3pt) -- cycle;}
	\end{scope}
	\draw[decorate, decoration={coil, aspect=0}, -stealth'] (6.5,0) -- (8,0);
	\end{tikzpicture}
\end{minipage}
\end{equation}
Then, if we let $n = |a\cap \mathcal{B}|$ we have $|a \cap \mathcal{B}^{\complement}|=\ell-n$
and $|a^{\complement} \cap \mathcal{B}|=m-n$, as well. Therefore, $|a^{\complement}\cap\mathcal{B}^{\complement}|$
is equal to $\ell+m-|a\cap \mathcal{B}|-|a \cap \mathcal{B}^{\complement}|-|a^{\complement} \cap \mathcal{B}|$,
which is $n$. The number of ways of choosing one quadrant to shade in -- (top,left), (top,right), (bottom,left), (bottom,right)  --
with a prescribed number of each $n$, $\ell-n$, $m-n$, $n$ is $N(\ell,m,n)$ where
\begin{equation}
	N(\ell,m,n)\ =\, \binom{\ell+m}{n,\ell-n,m-n,n}\, .
\end{equation}
Summing over all choices of $n$ gives all possibilities for choosing the two sets $a$ and $\mathcal{B}$.
Since they are chosen independently, that is equal to $\widetilde{N}(\ell,m) \widetilde{N}(\ell,m)$ where 
$\widetilde{N}(\ell,m)$ is the number of ways of choosing a set of cardinality $\ell$ as a subset of a larger set of cardinality
$\ell+m$, which is
\begin{equation}
\widetilde{N}(\ell,m)\, =\, \binom{\ell+m}{\ell,m}\, .
\end{equation}
Since we have proved that $\sum_{n} N(\ell,m,n)$ is equal to $\widetilde{N}(\ell,m) \widetilde{N}(\ell,m)$, this  proves equation (\ref{eq:gamma2combinatorial}).
\end{proofof}
\begin{remark}
\label{rem:Pinsky1}
The picture as in (\ref{eq:CombinatorialExample}) could have been made clearer if we had used multiple colors, as we did in a first draft of our preprint. But
we wished to obtain another picture which could be more easily printed in black-and-white. The combinatorial object is the same as one that was already
in Pinsky's paper in the first step of his proof of Lemma 2 \cite{Pinsky1}.
\end{remark}

Because of Lemma \ref{lem:gamma2} and equation (\ref{eq:betaSecondFormula}), we have
\begin{equation}
\label{eq:beta}
	\kappa(w,x,y)\, =\, \frac{\varkappa^{(2)}(x,y)}{1-w \cdot \varkappa^{(2)}(x,y)}\, =\, \frac{1}{\left(\left(1-\left(x+y\right)\right)^2-4xy\right)^{1/2}-w}\, .
\end{equation}
Now define another generating function
\begin{equation}
	\alpha(w,z)\, =\, \sum_{j,N=0}^{\infty} w^j z^N A(N,j)\, =\, \sum_{j,N=0}^{\infty} w^j z^N B(N,N,j)\, ,
\end{equation}
which is the generating function for the combinatorial numbers we want, $A(N,j)$
for $N,j \in \{0,1,\dots\}$.

}

\section{The diagonal (contour integral) method}
\label{sec:PinskyDiagonal}

We now apply the diagonal method, based on a complex  contour integral.
Firstly, we obtain
\begin{equation}
\label{eq:alphaIntegral}
	\oint_{\mathcal{C}(0;1)} \beta\left(w\, ,\ x\xi\, ,\ \frac{y}{\xi}\right)\, \frac{d\xi}{2\pi i \xi}\, =\, \alpha(w,xy)\, ,
\end{equation}
for sufficiently small $x$, $y$ and $w$.
To state it again with the detailed formula, we have
\begin{equation}
\label{eq:alphaIntegralOneHalf}
	\alpha(w,xy)\,  =\,
\oint_{\mathcal{C}(0;1)}  \frac{1}{\left(\left(1-\left(x \xi+y \xi^{-1}\right)\right)^2-4xy\right)^{1/2}-w}\, \cdot \frac{d\xi}{2\pi i \xi}\, .
\end{equation}
For simplicity, we will specialize this to $y=x$. That gives
\begin{equation}
\label{eq:alphaIntegralThreeFourths}
	\alpha(w,x^2)\,  =\,
\oint_{\mathcal{C}(0;1)}  \frac{1}{\left(\left(1-x\cdot\left(\xi+\xi^{-1}\right)\right)^2-4x^2\right)^{1/2}-w}\, \cdot \frac{d\xi}{2\pi i \xi}\, .
\end{equation}
This may then be further simplified as 
\begin{equation}
\label{eq:alphaIntegralFourFifths}
	\alpha(w,x^2)\,  =\,
\oint_{\mathcal{C}(0;1)}  \frac{1}{\Big(\big(1-x\cdot (\xi+2+\xi^{-1})\big)\big(1-x\cdot (\xi-2+\xi^{-1})\big)\Big)^{1/2}-w}\, \cdot \frac{d\xi}{2\pi i \xi}\, .
\end{equation}
We will do a contour integral for $\xi$ as in Figure \ref{fig:contour},
which requires analytically continuing this function from $\xi \in \mathcal{C}(0;1)$.
The argument of the square-root in equation (\ref{eq:alphaIntegralFourFifths}) is closely connected
to the quartic function of $\xi$
\begin{equation}
\label{eq:calQ1Def}
\begin{split}
	\mathcal{Q}_1(x;\xi)\, 
	&=\, \big(\xi - x\cdot(\xi^2+1)\big)^2-4x^2\xi^2\\ 
	&=\, \big(x \cdot(\xi^2+1) - (1+2x)\xi\big)\big(x \cdot(\xi^2+1) - (1-2x)\xi\big)\, .
\end{split}
\end{equation}
More precisely, we can rewrite equation (\ref{eq:alphaIntegralFourFifths}) as
\begin{equation}
\label{eq:alphaIntegralFiveSixths}
	\alpha(w,x^2)\,  =\,
\oint_{\mathcal{C}(0;1)}  \frac{1}{\big(\mathcal{Q}_1(x;\xi)/\xi^2\big)^{1/2}-w}\, \cdot \frac{d\xi}{2\pi i \xi}\, ,
\end{equation}
because, for $\xi \neq 0$ we have
\begin{equation}
\label{eq:sqrtArgument}
	\big(1-x\cdot (\xi+2+\xi^{-1})\big)\big(1-x\cdot (\xi-2+\xi^{-1})\big)\, 
	=\, \frac{\mathcal{Q}_1(x;\xi)}{\xi^2}\, .
\end{equation}
We will want to rationalize the denominator of the integrand. Therefore, let us also introduce
another quartic function of $\xi$, 
\begin{equation}
\label{eq:calQ2Def}
\begin{split}
	\mathcal{Q}_2(x,w;\xi)\, 
	&=\, \mathcal{Q}_1(x;\xi)-w^2 \xi^2\\
	&=\, \big(\xi - x\cdot(\xi^2+1)\big)^2-(4x^2+w^2)\xi^2\, .
\end{split}
\end{equation}
If we assume that $x$ and $w$ are both in $[0,\infty)$, then we may rewrite this as 
\begin{equation}
\label{eq:calQ2Def2}
	\mathcal{Q}_2(x,w;\xi)\, 
	=\, \left(x \cdot(\xi^2+1) - \left(1+\sqrt{4x^2+w^2}\right)\xi\right)
	\left(x \cdot(\xi^2+1) - \left(1-\sqrt{4x^2+w^2}\right)\xi\right)\, .
\end{equation}
Now we will begin to simplify the polynomials $\mathcal{Q}_2(x;\cdot)$ and $\mathcal{Q}_2(x,w;\cdot)$.
First we may a quick definition to set notation.
\begin{definition}
Let $\mathcal{U}(z_0;r)$ denote the open unit disk in the complex plane $\{z\, :\, |z-z_0|<r\}$.
Let $\overline{\mathcal{U}}(z_0;r)$ denote its closure $\{z\, :\, |z-z_0|\leq r\}$.
\end{definition}
The following four lemmas will be proved in in Appendix \ref{sec:quadratic}. They all rely on the quadratic formula
for roots of 1-variable quadratic equations.
\begin{figure}
\begin{center}
\begin{tikzpicture}[xscale=1.5,yscale=1.5]
	\draw[very thick,->] (-2.5,0) -- (5,0);
	\draw[very thick,->] (0,-2.5) -- (0,2.5);
	\draw[line width=0.3cm, black!75!white, opacity=0.5] (0.75,0) -- (1.5,0);
	\foreach \t in {0.65,0.7,0.75,...,1.6}{
		\draw[thin] (\t,-0.1) -- (\t,0.1);}
	\fill[black!75!white, opacity=0.5] (0.75,0) -- (0.75,0.1) arc (90:270:0.1cm) -- cycle;
	\fill[black!75!white, opacity=0.5] (1.5,0) -- (1.5,0.1) arc (90:-90:0.1cm) -- cycle;
	\draw (0.65,0.1) -- (0.65,-0.1) node[below] {\small $c_1$};
	\draw (1.6,0.1) -- (1.6,-0.1) node[below] {\small $c_2$};
	\draw (0,0) circle (2cm);
	\fill (0.45,0) circle (2pt) node[above=1pt] {\small $a_1$};
	\fill (1.8,0) circle (2pt) node[above=1pt] {\small $a_2$};
	\begin{scope}[xshift=2.5cm]
	\draw[line width=0.3cm, black!75!white, opacity=0.5] (0.75,0) -- (1.5,0);
	\foreach \t in {0.65,0.7,0.75,...,1.6}{
		\draw[thin] (\t,-0.1) -- (\t,0.1);}
	\fill[black!75!white, opacity=0.5] (0.75,0) -- (0.75,0.1) arc (90:270:0.1cm) -- cycle;
	\fill[black!75!white, opacity=0.5] (1.5,0) -- (1.5,0.1) arc (90:-90:0.1cm) -- cycle;
	\draw (0.65,0.1) -- (0.65,-0.1) node[below] {\small $d_1$};
	\draw (1.6,0.1) -- (1.6,-0.1) node[below] {\small $d_2$};
	\fill (0.35,0) circle (2pt) node[above=1pt] {\small $b_1$};
	\fill (1.9,0) circle (2pt) node[above=1pt] {\small $b_2$};
	\end{scope}
	\draw (45:2cm) node[rotate=135] {\small $\boldsymbol{>}$};
\end{tikzpicture}
\caption{\textsl{A schematic picture -- it is not to scale but it does accurately depict the order of points -- for the branch cuts and poles of the integral in (\ref{eq:alphaIntegralSevenEighths}).
The configuration of points and branch cuts is shown assuming $u,t \in (0,1)$ are sufficiently small.
The poles are at $a_1(x,w)$, $a_2(x,w)$, $b_1(x,w)$ and $b_2(x,w)$.
The branch cuts are the intervals $[c_1(x),c_2(x)]$ and $[d_1(x),d_2(x)]$.
\label{fig:contour}
}}
\end{center}
\end{figure}
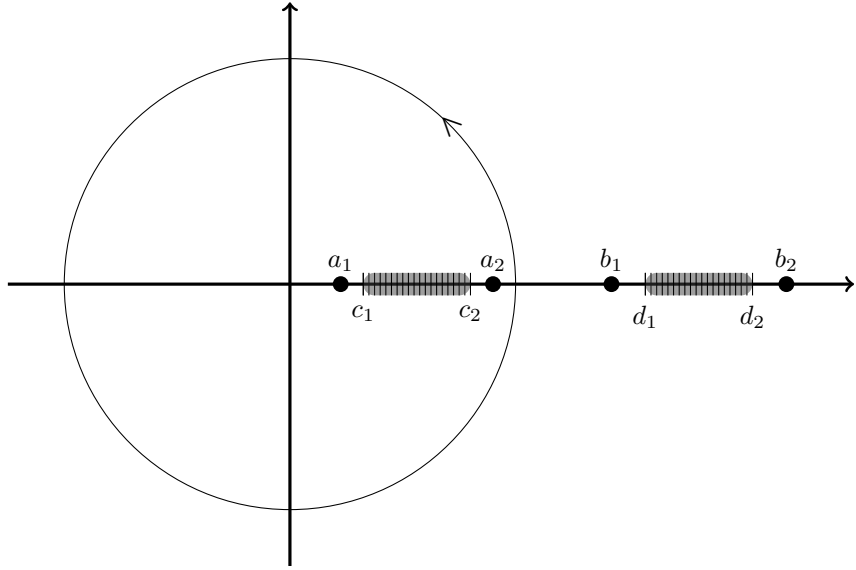
\begin{lemma}
\label{lem:calQ1roots}
Suppose that we have a number $x \in (0,1/4)$. Then, defining
\begin{equation}
\begin{split}
	c_1(x)\, &=\, \frac{1+2x-\sqrt{1+4x}}{2x}\, ,\qquad c_2(x)\, =\, \frac{1-2x-\sqrt{1-4x}}{2x}\, ,\\[5pt]
	d_1(x)\, &=\, \frac{1-2x+\sqrt{1-4x}}{2x}\, ,\qquad d_2(x)\, =\, \frac{1+2x+\sqrt{1+4x}}{2x}\, ,
\end{split}
\end{equation}
we have
\begin{equation}
\label{eq:calQ1roots}
	\mathcal{Q}_1(x;\xi)\, =\, x^2 \big(\xi  - c_1(x)\big)\big(\xi-c_2(x)\big)\big(\xi-d_1(x)\big)\big(\xi-d_2(x)\big)\, .
\end{equation}
\end{lemma}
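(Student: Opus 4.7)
The key observation is that equation (\ref{eq:calQ1Def}) already exhibits $\mathcal{Q}_1(x;\xi)$ as a product of two quadratics in $\xi$, namely
\begin{equation*}
\mathcal{Q}_1(x;\xi)\, =\, \bigl(x\xi^2 - (1+2x)\xi + x\bigr)\bigl(x\xi^2 - (1-2x)\xi + x\bigr)\, .
\end{equation*}
So the plan is simply to apply the quadratic formula to each factor and then match the four resulting roots with the candidates $c_1(x), c_2(x), d_1(x), d_2(x)$ defined in the statement.

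For the first factor, the quadratic formula gives
\begin{equation*}
\xi\, =\, \frac{(1+2x) \pm \sqrt{(1+2x)^2 - 4x^2}}{2x}\, =\, \frac{(1+2x) \pm \sqrt{1+4x}}{2x}\, ,
\end{equation*}
so I would identify the minus sign with $c_1(x)$ and the plus sign with $d_2(x)$. For the second factor,
\begin{equation*}
\xi\, =\, \frac{(1-2x) \pm \sqrt{(1-2x)^2 - 4x^2}}{2x}\, =\, \frac{(1-2x) \pm \sqrt{1-4x}}{2x}\, ,
\end{equation*}
and these roots match $c_2(x)$ (minus) and $d_1(x)$ (plus). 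Note that the assumption $x \in (0,1/4)$ guarantees $1-4x > 0$, so all four roots are real; without this hypothesis the second pair of roots would be complex conjugates. The leading coefficient in $\xi$ of $\mathcal{Q}_1(x;\xi)$ is the product of the two leading $x$'s, giving $x^2$, which accounts for the prefactor in equation (\ref{eq:calQ1roots}).

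There is no real obstacle here; the lemma is a direct computation once one recognizes the factored form (\ref{eq:calQ1Def}). The only thing worth verifying carefully is the algebra $(1 \pm 2x)^2 - 4x^2 = 1 \pm 4x$, and the bookkeeping of which root goes with which name, which is fixed by the convention that $c_i \leq d_i$ and by using the schematic in Figure \ref{fig:contour} as a sanity check on the ordering of the four roots on the positive real axis when $x$ is small.
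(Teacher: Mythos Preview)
Your proof is correct and follows essentially the same approach as the paper: recognize the factorization in (\ref{eq:calQ1Def}), apply the quadratic formula to each quadratic factor, and identify the four roots with $c_1,c_2,d_1,d_2$, noting that the leading coefficient is $x^2$. The paper routes the quadratic-formula step through the auxiliary notation $\kappa_\pm(x)=(1\pm 2x)/x$ and $\rho^{(\pm)}_\pm$ of Appendix~\ref{sec:roots}, writing each root in the form $\tfrac14(\rho_+\pm\rho_-)^2$, but this is just a packaging choice and is equivalent to your direct computation of the discriminants $(1\pm 2x)^2-4x^2=1\pm 4x$.
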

Note in particular that
\begin{equation}
\label{eq:Q1inversive}
	c_1(x)\cdot d_2(x)\, =\, c_2(x) \cdot d_1(x)\, =\, 1\, ,
\end{equation}
which can also be seen since they are the roots of the self-inversive quadratic polynomials of $\xi$ in the ultimate formula
in (\ref{eq:calQ1Def}). 
\begin{lemma}
\label{lem:calQ2roots}
Suppose that we have a number $x \in (0,1/4)$ and another number $w$ satisfying
\begin{equation}
	0\, \leq\, w\, \leq\, \sqrt{1-4x}\, .
\end{equation}
Then, defining
\begin{equation}
\begin{split}
	a_1(x,w)\, &=\, \frac{1+\sqrt{4x^2+w^2}-\sqrt{1+w^2+2\sqrt{4x^2+w^2}}}{2x}\, ,\\[5pt]
	a_2(x,w)\, &=\, \frac{1-\sqrt{4x^2+w^2}-\sqrt{1+w^2-2\sqrt{4x^2+w^2}}}{2x}\, ,\\[5pt]
	b_1(x,w)\, &=\, \frac{1-\sqrt{4x^2+w^2}+\sqrt{1+w^2-2\sqrt{4x^2+w^2}}}{2x}\, ,\\[5pt]
	b_2(x,w)\, &=\, \frac{1+\sqrt{4x^2+w^2}+\sqrt{1+w^2+2\sqrt{4x^2+w^2}}}{2x}\, ,
\end{split}
\end{equation}
we have
\begin{equation}
\label{eq:Q2factorization}
	\mathcal{Q}_2(x,w;\xi)\, =\, x^2\cdot \big(\xi  - a_1(x,w)\big)\big(\xi-a_2(x,w)\big)\big(\xi-b_1(x,w)\big)\big(\xi-b_2(x,w)\big)\, .
\end{equation}
\end{lemma}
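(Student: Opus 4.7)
The plan is to apply the quadratic formula to each of the two quadratic factors of $\mathcal{Q}_2(x,w;\xi)$ already exhibited in equation (\ref{eq:calQ2Def2}). Each factor has the form $x\xi^2 - s_\pm \xi + x$ with $s_\pm = 1 \pm \sqrt{4x^2+w^2}$, so its two roots are $\xi = (s_\pm \pm \sqrt{s_\pm^2 - 4x^2})/(2x)$. A short computation gives $s_\pm^2 - 4x^2 = 1 + w^2 \pm 2\sqrt{4x^2+w^2}$, and substituting back shows that the factor with $s_+$ has roots $a_1(x,w)$ (minus sign) and $b_2(x,w)$ (plus sign), while the factor with $s_-$ has roots $a_2(x,w)$ and $b_1(x,w)$. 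Matching the leading coefficients $x^2$ on both sides then yields equation (\ref{eq:Q2factorization}).

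The step that actually uses the hypothesis $0 \leq w \leq \sqrt{1-4x}$ is the verification that all four roots are real, equivalently that the inner radicals $\sqrt{1+w^2 \pm 2\sqrt{4x^2+w^2}}$ are real. The $+$ case is automatic. For the $-$ case, non-negativity of $1+w^2 - 2\sqrt{4x^2+w^2}$ squares (legitimately, since $w^2 \leq 1-4x \leq 1$) to $(1-w^2)^2 \geq 16 x^2$, which rearranges to $1 - w^2 \geq 4x$, i.e., $w \leq \sqrt{1-4x}$, exactly the hypothesis.

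I expect the only real obstacle to be the bookkeeping. The factorization (\ref{eq:calQ2Def2}) hands us the two quadratics on a platter, and the quadratic formula then does all the work. A mild extra observation worth recording, though not strictly required by the statement of the lemma, is that at $w=0$ the four roots $a_1, a_2, b_1, b_2$ reduce respectively to $c_1, c_2, d_1, d_2$ from Lemma \ref{lem:calQ1roots}, while at the other endpoint $w = \sqrt{1-4x}$ the pair $\{a_2, b_1\}$ collides. The hypothesis is therefore sharp, and in the open range $0 < w < \sqrt{1-4x}$ one has $a_1 < a_2 < b_1 < b_2$ on the positive real axis, consistent with the schematic placement of poles in Figure \ref{fig:contour} needed for the contour deformation in Section \ref{sec:PinskyDiagonal}.
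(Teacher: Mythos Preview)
Your proposal is correct and takes essentially the same approach as the paper: both start from the factorization (\ref{eq:calQ2Def2}) and apply the quadratic formula to each quadratic factor, with the hypothesis $w \leq \sqrt{1-4x}$ used to ensure the discriminant $1+w^2-2\sqrt{4x^2+w^2}$ is nonnegative. The only cosmetic difference is that the paper routes the computation through the notation $\widetilde{\kappa}_{\pm}(x,w)=(1\pm\sqrt{4x^2+w^2})/x$ and the squared-sum form $\frac{1}{4}(\sqrt{\kappa+2}\pm\sqrt{\kappa-2})^2$ for the roots set up in Appendix~\ref{sec:roots}, whereas you apply the quadratic formula directly.
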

Of course,
we also have
\begin{equation}
\label{eq:Q2inversive}
	a_1(x,w)\cdot b_2(x,w)\, =\, a_2(x,w) \cdot b_1(x,w)\, =\, 1\, ,
\end{equation}
because these are the roots of the self-inversive quadratic polynomials of $\xi$ 
in (\ref{eq:calQ2Def}). 
Let us note the following ordering of the roots for the two quartic equations,
as in Figure \ref{fig:contour}.
\begin{lemma}
\label{lem:rootsOrder}
If we have $x \in (0,1/4)$ and $w \in (0,\sqrt{1-4x})$ then we have
\begin{equation}
	0<a_1(x,w)<c_1(x)<c_2(x)<a_2(x,w)<1<b_1(x,w)<d_1(x)<d_2(x)<b_2(x,w)\, .
\end{equation}
\end{lemma}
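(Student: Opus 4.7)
The plan is to exploit the self-inversive structure already noted in equations (\ref{eq:Q1inversive}) and (\ref{eq:Q2inversive}): each of the four paired factors of $\mathcal{Q}_1$ and $\mathcal{Q}_2$ is a quadratic of the form $x\xi^2 - s\xi + x$ for a specific $s > 0$, and its two roots are reciprocal to each other. Parametrize the four relevant values of $s$ by
\begin{equation*}
s_1 \,=\, 1 - \sqrt{4x^2+w^2},\quad s_2 \,=\, 1-2x,\quad s_3 \,=\, 1+2x,\quad s_4 \,=\, 1 + \sqrt{4x^2+w^2}.
\end{equation*}
Applying the quadratic formula to $x\xi^2 - s_i\xi + x$ and matching against the definitions of $a_i,b_i,c_i,d_i$ shows that for $i=1,2,3,4$ the corresponding pairs of roots are $\{a_2,b_1\}$, $\{c_2,d_1\}$, $\{c_1,d_2\}$, $\{a_1,b_2\}$ respectively.

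First I would establish $2x < s_1 < s_2 < s_3 < s_4$ under the hypotheses. The inequality $s_2 < s_3$ is trivial; both $s_1 < s_2$ and $s_3 < s_4$ reduce to $2x < \sqrt{4x^2+w^2}$, which holds because $w > 0$; and $2x < s_1$, i.e.\ $\sqrt{4x^2+w^2} < 1-2x$, is equivalent upon squaring to $w^2 < 1-4x$, which is precisely the upper bound on $w$. This same calculation yields $s_i^2 - 4x^2 > 0$ for all $i$, so each quadratic has two distinct real roots, and these roots are positive since both their sum $s_i/x$ and their product $1$ are positive. I would then introduce on $s \in (2x,\infty)$ the smaller- and larger-root maps $\phi(s) = (s - \sqrt{s^2-4x^2})/(2x)$ and $\psi(s) = (s + \sqrt{s^2-4x^2})/(2x)$. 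A one-line derivative calculation gives $\phi'(s) < 0 < \psi'(s)$, and $\phi(s)\psi(s) = 1$ together with positivity forces $0 < \phi(s) < 1 < \psi(s)$ throughout the interval.

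Applied to the strictly increasing sequence $s_1<s_2<s_3<s_4$, this produces the two strictly monotone chains
\begin{equation*}
0 \,<\, a_1 = \phi(s_4) \,<\, c_1 = \phi(s_3) \,<\, c_2 = \phi(s_2) \,<\, a_2 = \phi(s_1) \,<\, 1,
\end{equation*}
\begin{equation*}
1 \,<\, b_1 = \psi(s_1) \,<\, d_1 = \psi(s_2) \,<\, d_2 = \psi(s_3) \,<\, b_2 = \psi(s_4),
\end{equation*}
whose concatenation is exactly the ordering asserted by the lemma. The argument has no genuine obstacle beyond careful bookkeeping of which root of each quadratic gets which name; the hypothesis $w < \sqrt{1-4x}$ is used in exactly one place, namely to secure $s_1 > 2x$ so that the quadratic with the smallest parameter still possesses two distinct positive roots. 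Once that is in place, strict monotonicity of $\phi$ and $\psi$ in $s$ does all the remaining work.
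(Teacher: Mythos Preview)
Your proof is correct and follows essentially the same strategy as the paper. The paper parametrizes by $\kappa=s/x$ rather than $s$, writes the smaller root as $\tfrac{1}{4}(\sqrt{\kappa+2}-\sqrt{\kappa-2})^2$, proves that this function is decreasing on $(2,\infty)$, and then establishes the same ordering $2<\widetilde{\kappa}_-<\kappa_-<\kappa_+<\widetilde{\kappa}_+$ of parameters; for the larger roots it appeals to the reciprocal relations (\ref{eq:Q1inversive}), (\ref{eq:Q2inversive}) rather than introducing a separate $\psi$. Your packaging via the pair $\phi,\psi$ is slightly more symmetric, but the underlying argument is the same monotonicity-in-the-quadratic-coefficient idea.
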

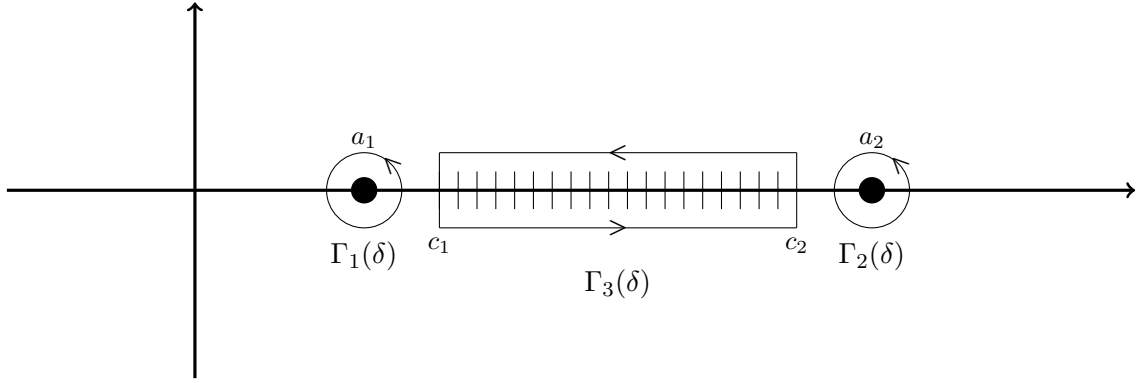
\begin{figure}
\begin{center}
\begin{tikzpicture}[xscale=5,yscale=5]
	\draw[very thick,->] (-0.5,0) -- (2.5,0);
	\draw[very thick,->] (0,-0.5) -- (0,0.5);
	\foreach \t in {0.65,0.7,0.75,...,1.6}{
		\draw[thin] (\t,-0.05) -- (\t,0.05);}
	\draw (0.65,0.1) -- (0.65,-0.1) node[below] {\small $c_1$};
	\draw (0.65,-0.1) -- (1.6,-0.1);
	\draw (1.125,-0.1) node[rotate=0] {\scriptsize $\boldsymbol{>}$};
	\draw (1.6,0.1) -- (1.6,-0.1) node[below] {\small $c_2$};
	\draw (1.6,0.1) -- (0.65,0.1);
	\draw (1.125,0.1) node[rotate=0] {\scriptsize $\boldsymbol{<}$};
	\fill (0.45,0) circle (1pt) node[above=12pt] {\small $a_1$};
	\draw (0.45,0) circle (0.1cm);
	\draw (0.45,-0.175) node[] {$\Gamma_1(\delta)$};
	\draw (0.45,0) +(45:0.1cm) node[rotate=135] {\scriptsize $\boldsymbol{>}$};
	\fill (1.8,0) circle (1pt) node[above=12pt] {\small $a_2$};
	\draw (1.8,0) circle (0.1cm);
	\draw (1.8,-0.175) node[] {$\Gamma_2(\delta)$};
	\draw (1.8,0) +(45:0.1cm) node[rotate=135] {\scriptsize $\boldsymbol{>}$};
	\draw (1.125,-0.25) node[] {$\Gamma_3(\delta)$};
\end{tikzpicture}
\caption{\textsl{A schematic of the contours after deformation as used in the proof of Lemma \ref{lem:residue1}. The  positively oriented circular contours 
$\Gamma_1(\delta)$ and $\Gamma_2(\delta)$ are of radius $\delta$, centered at the poles $a_1(x,w)$
and $a_2(x,w)$. The branch cut along the real axis interval from $c_1(x)$ to $c_2(x)$ is enclosed by
the contour $\Gamma_3(\delta)$ which is the rectangle traversing the vertices: $c_1(x)-i\delta$, $c_2(x)-i\delta$, $c_2(x)+i\delta$ 
and $c_1(x)+i\delta$, in that order.
\label{fig:Deformation}
}}
\end{center}
\end{figure}
Let us introduce an analytic continuation of the function from (\ref{eq:sqrtArgument}),
$\left(\left(1-x\xi-x\xi^{-1}\right)-4x^2\right)^{1/2}$.
\begin{lemma}
\label{lem:sqrtContinuation}
Assume we have $x \in (0,1/4)$. Define the domain $\mathcal{D}_2(x)$ as 
\begin{equation}
	\mathcal{D}_2(x)\, =\, \mathcal{U}(0;d_1(x)) \setminus \overline{\mathcal{U}}(0;c_1(x))\, .
\end{equation}
Then for all $\xi \in \mathcal{D}_2(x)$, we have $\mathcal{Q}_1(x;\xi)/\xi^2$ is in $\mathcal{D}_1$
from Remark \ref{rem:sqrt}.
So, defining the function,
\begin{equation}
f(x;\cdot) : \mathcal{D}_2(x) \to \C\, ,
\end{equation}
by the formula
\begin{equation}
\label{eq:fDef}
	f(x;\xi)\, =\, \sqrt{\mathcal{Q}_2(x;\xi)/\xi^2}\, =\, \big(\left(1 - x \cdot ( \xi +\xi^{-1})\right)^2-4x^2\big)^{1/2}\, ,
\end{equation}
we have that $f(x;\cdot)$ is analytic.
\end{lemma}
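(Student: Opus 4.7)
The plan is to reduce analyticity of $f(x;\cdot)$ to the single claim that $\mathcal{Q}_1(x;\xi)/\xi^2$ avoids the non-positive real axis on $\mathcal{D}_2(x)$. Granting that, $f$ is the composition of the rational map $\xi \mapsto \mathcal{Q}_1(x;\xi)/\xi^2$ (analytic on $\C\setminus\{0\}$, and in particular on $\mathcal{D}_2(x)$) with the principal branch of $\sqrt{\cdot}$, which is analytic on $\mathcal{D}_1$ by Remark \ref{rem:sqrt}; the chain rule then delivers analyticity of $f(x;\cdot)$.

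The algebraic heart of the argument is the factorization
\begin{equation*}
\mathcal{Q}_1(x;\xi)/\xi^2 \;=\; \bigl(g(\xi) - 2x\bigr)\bigl(g(\xi) + 2x\bigr), \qquad g(\xi) \;=\; 1 - x(\xi + \xi^{-1}).
\end{equation*}
Writing $g(\xi) = u + iv$, the product equals $(u^2 - v^2 - 4x^2) + 2iuv$, and this lies in $(-\infty,0]$ in exactly two situations: (A) $u = 0$, i.e., $g(\xi)$ is purely imaginary (forcing value $-v^2 - 4x^2 < 0$); or (B) $v = 0$ with $u^2 \leq 4x^2$, i.e., $g(\xi)$ is real with $|g(\xi)| \leq 2x$. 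The entire proof now reduces to ruling out both situations on the annulus.

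For case (B), $g$ is real iff $\xi + \xi^{-1}$ is real iff $\xi$ is real or $|\xi| = 1$. The equations $g(\xi) = \pm 2x$ are precisely the two self-inversive quadratics $x(\xi^2+1) = (1\pm 2x)\xi$ displayed in (\ref{eq:calQ1Def}), whose roots Lemma \ref{lem:calQ1roots} identifies as $\{c_1,d_2\}$ and $\{c_2,d_1\}$. Hence on the positive real axis $|g| \leq 2x$ holds exactly on $[c_1,c_2] \cup [d_1,d_2]$; on the negative real axis, $\xi + \xi^{-1} \leq -2$ gives $g \geq 1 + 2x > 2x$; and on $|\xi| = 1$, $g = 1 - 2x\cos\theta \geq 1 - 2x > 2x$ since $x < 1/4$. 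For case (A), parametrizing $\xi = re^{i\theta}$ yields $\operatorname{Re} g(\xi) = 1 - x(r + r^{-1})\cos\theta$, so $\operatorname{Re} g = 0$ demands $x(r + r^{-1}) \geq 1$, equivalently $r \notin (r_-, r_+)$, where $r_\pm = (1 \pm \sqrt{1 - 4x^2})/(2x)$ are the roots of $xr^2 - r + x = 0$ and satisfy $r_- r_+ = 1$.

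The final step is the root-interleaving $c_1 < r_- < c_2 < 1 < d_1 < r_+ < d_2$, which I would verify by short squaring arguments; for example, $r_- > c_1$ is equivalent to $\sqrt{1+4x} - 2x > \sqrt{1 - 4x^2}$, and after squaring and simplifying collapses to $4x^2 > 0$. Combined with Lemma \ref{lem:rootsOrder}, this places every "forbidden" $\xi$ into $\{|\xi| \in [c_1,c_2] \cup [d_1,d_2]\}$, so the annulus strictly between the two branch-cut intervals misses the bad locus and the image of $\mathcal{D}_2(x)$ sits inside $\mathcal{D}_1$. I expect the main obstacle to be the bookkeeping in this interleaving step: organizing the eight real parameters ($c_1,c_2,d_1,d_2,r_\pm$, and the two radii bounding the annulus) and showing each relevant inequality flows cleanly from $x \in (0,1/4)$. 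Once the interleaving is in hand, the case analysis closes and analyticity of $f(x;\cdot)$ is immediate.
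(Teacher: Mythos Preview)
Your approach mirrors the paper's almost exactly: the same reduction to $h(x;\xi)=g(\xi)^2-4x^2$ avoiding $(-\infty,0]$, the same dichotomy into $g$ real versus $g$ purely imaginary, and the same analysis of $r+r^{-1}\ge 1/x$ via the auxiliary roots $r_\pm$ (the paper calls them $\mathfrak{R}_\pm(x)$).

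There is, however, a genuine gap in your final step, and it is the same gap present in the paper's own argument. Your case analysis already shows that the forbidden locus contains (i) every real $\xi\in[c_1,c_2]$ from case~(B), and (ii) every $\xi$ with $|\xi|\le r_-$ at the angle $\cos\theta=1/(x(r+r^{-1}))$ from case~(A). Since you yourself establish $c_1<r_-<c_2$, both of these produce forbidden points whose modulus lies strictly between $c_1$ and $c_2$, hence inside $\mathcal{D}_2(x)=\{c_1<|\xi|<d_1\}$. Concretely, for $x=0.1$ one has $c_1\approx 0.084$ and $r_-\approx 0.101$; taking $r=0.09$ and $\theta=\arccos\!\big(1/(x(r+r^{-1}))\big)$ gives $g(\xi)\in i\R$ and $h(x;\xi)\approx -0.29<0$. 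So the lemma, read literally with inner radius $c_1(x)$, is false.

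What your argument (and the paper's) actually establishes is that $h$ maps the \emph{smaller} annulus $\{c_2<|\xi|<d_1\}$ into $\mathcal{D}_1$; the paper's proof even ends with the sentence ``equation \dots\ reduces to just $|\xi|\in(c_2(x),d_1(x))$.'' Your phrase ``the annulus strictly between the two branch-cut intervals'' is precisely this smaller annulus, not $\mathcal{D}_2(x)$, so the closing identification is a non sequitur. The fix is to take the inner radius to be $c_2(x)$: that annulus still contains $\mathcal{C}(0;1)$ and is all that is needed downstream in equation~(\ref{eq:alphaIntegralSixSevenths}).
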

We will prove Lemmas \ref{lem:calQ1roots}, \ref{lem:calQ2roots}, \ref{lem:rootsOrder} 
and \ref{lem:sqrtContinuation} in Appendix \ref{sec:quadratic}.
They are all elementary, and essentially follow by applying the quadratic formula.

We want to deform the contour from the integral in (\ref{eq:alphaIntegralFiveSixths})
as in Figure \ref{fig:Deformation}.
For equation (\ref{eq:alphaIntegralFiveSixths}) we rationalize the denominator.
From equation (\ref{eq:calQ2Def2}) and Lemma \ref{lem:sqrtContinuation}, we may 
rewrite that equation as
\begin{equation}
\label{eq:alphaIntegralSixSevenths}
	\alpha(w,x^2)\,  =\,
\oint_{\mathcal{C}(0;1)}  \frac{\xi\cdot f(x;\xi)+w\cdot \xi}
	{\mathcal{Q}_2(x,w;\xi)}\, \cdot \frac{d\xi}{2\pi i}\, .
\end{equation}
We multiplied numerator and denominator of the first fraction by $\xi^2$,
and then canceled one power from the numerator with the $\xi$ that had been
in the denominator of $d\xi/(2\pi i \xi)$, which is now absent.

To deform this as in as in Figure \ref{fig:Deformation},
we need a more optimal analytic continuation of the function $f(x;\cdot)$.
We start with the domain as in the right-hand picture in Figure \ref{fig:sqrtNew}.
\begin{lemma}
\label{lem:sqrtContinuationB}
Assume we have $x \in (0,1/4)$. Define the domain $\mathcal{D}_3(x)$ as 
\begin{equation}
	\mathcal{D}_3(x)\, =\, \C \setminus \big((-\infty,c_2(x)] \cup [d_1(x),\infty)\big)\, .
\end{equation}
Then define the  function
\begin{equation}
g(x,\cdot) : \mathcal{D}_3(x) \to \C\, ,
\end{equation}
by the formula
\begin{equation}
\label{eq:gDefinition}
	g(x;\xi)\, =\, x\, \cdot \sqrt{\xi-c_1(x)}\, \cdot \sqrt{\xi-c_2(x)}\,
\cdot \sqrt{d_1(x)-\xi}\, \cdot \sqrt{d_2(x)-\xi}\, ,
\end{equation}
where the square-root is as in Remark \ref{rem:sqrt}.
Then $g(x;\cdot)$ is analytic and 
\begin{equation}
\label{eq:gEQUALSf}
	\forall \xi \in \mathcal{D}_2(x)\cap \mathcal{D}_3(x)= \big(\mathcal{U}(0;d_1(x))\setminus \overline{\mathcal{U}}(0;c_1(x))\big)
	\setminus [-d_1(x),-c_1(x)]\, ,\
	\text{ we have }\ g(x;\xi)\, =\, \xi\cdot f(x;\xi)\, . 
\end{equation}
\end{lemma}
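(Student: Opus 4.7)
The plan has three stages: establish analyticity of $g(x;\cdot)$ on $\mathcal{D}_3(x)$, show that squaring both sides of the claimed identity gives the same polynomial, and pin down the resulting sign ambiguity via a single test-point evaluation. For analyticity, each factor $\sqrt{\xi - c_i(x)}$ is analytic off the preimage of the standard branch cut $(-\infty, 0]$, namely off $(-\infty, c_i(x)]$, and similarly $\sqrt{d_i(x) - \xi}$ is analytic off $[d_i(x), \infty)$. By Lemma \ref{lem:rootsOrder} the ordering $c_1(x) < c_2(x) < d_1(x) < d_2(x)$ collapses the union of all four cuts to $(-\infty, c_2(x)] \cup [d_1(x), \infty)$, whose complement in $\C$ is exactly $\mathcal{D}_3(x)$; on $\mathcal{D}_3(x)$ the product of four analytic functions is analytic.

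For the squaring identity, a direct computation using $(d_i(x) - \xi) = -(\xi - d_i(x))$ together with Lemma \ref{lem:calQ1roots} yields
$$g(x;\xi)^2 \,=\, x^2 (\xi - c_1(x))(\xi - c_2(x))(d_1(x) - \xi)(d_2(x) - \xi) \,=\, \mathcal{Q}_1(x;\xi),$$
since the two sign flips cancel. Lemma \ref{lem:sqrtContinuation} gives $(\xi f(x;\xi))^2 = \xi^2 \cdot \mathcal{Q}_1(x;\xi)/\xi^2 = \mathcal{Q}_1(x;\xi)$ as well. Hence $g(x;\xi) = \varepsilon(\xi) \cdot \xi f(x;\xi)$ with $\varepsilon(\xi) \in \{+1, -1\}$ locally constant on the overlap $\mathcal{D}_2(x) \cap \mathcal{D}_3(x)$ by continuity.

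The main subtlety is fixing the sign. I first argue connectedness of the overlap: after removing the relevant real-axis segments from the annulus $\mathcal{D}_2(x)$, the real subinterval $(c_2(x), d_1(x))$ still lies in the overlap and bridges the upper and lower halves of the annulus, so the overlap is path-connected and one test point suffices. Take $\xi \in (c_2(x), d_1(x))$ real. All four quantities $\xi - c_1(x)$, $\xi - c_2(x)$, $d_1(x) - \xi$, $d_2(x) - \xi$ are positive reals, so by the convention in Remark \ref{rem:sqrt} each square root is a positive real and $g(x;\xi) > 0$. Likewise $\mathcal{Q}_1(x;\xi) > 0$ (two positive and two negative factors in Lemma \ref{lem:calQ1roots}), so $f(x;\xi) > 0$ and $\xi f(x;\xi) > 0$. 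Two positive reals with equal squares must coincide, so $\varepsilon \equiv +1$ on the overlap, which completes the proof. The principal obstacle is really just the branch-cut bookkeeping and the connectedness check; the rest is mechanical.
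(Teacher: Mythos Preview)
Your proof is correct and follows essentially the same approach as the paper's: factor-by-factor analyticity of $g(x;\cdot)$ from the standard branch cut, the squaring identity $g^2=(\xi f)^2=\mathcal{Q}_1$, a sign check at a real point of $(c_2(x),d_1(x))$ (the paper picks $\xi=1$ specifically), and then connectedness of the overlap to propagate the sign. The only cosmetic difference is that the paper phrases the last step as analytic continuation on a connected domain, whereas you phrase it via a locally constant $\varepsilon(\xi)\in\{\pm1\}$; these are equivalent, and your observation that neither side vanishes on the overlap (since the roots $c_1,c_2,d_1,d_2$ lie outside $\mathcal{D}_2(x)\cap\mathcal{D}_3(x)$) is implicitly needed and worth stating.
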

\begin{lemma}
\label{lem:sqrtContinuationC}
Assume we have $x \in (0,1/4)$. Define the domain $\mathcal{D}_4(x)$ as 
\begin{equation}
	\mathcal{D}_4(x)\, =\, \C \setminus \big([c_1(x),c_2(x)]\cup[d_1(x),d_2(x)]\big)\, .
\end{equation}
Then define the  function
\begin{equation}
\label{eq:tildegDef1}
	\widetilde{g}(x;\cdot) : \mathcal{D}_4(x) \to \C\, ,
\end{equation}
such that
\begin{equation}
\label{eq:tildegDef2}
	\forall \xi \in \mathcal{D}_3(x) = \Big(\C \setminus \big((-\infty,c_2(x)] \cup [d_1(x),\infty)\big)\Big)\, ,\ \text{ we have }\
	\widetilde{g}(x;\xi)\, =\, g(x;\xi)\, ,
\end{equation}
and
\begin{equation}
\label{eq:tildegDef3}
\forall \xi \in (-\infty,c_1(x)) \cup (d_2(x),\infty)\, ,\ 
\text{ we have }\ \widetilde{g}(x;\xi)\, =\, \lim_{\substack{\zeta \to \xi\\ \zeta \in  \C \setminus \R}}
 g(x;\zeta)\, .
\end{equation}
Then $\widetilde{g}(x;\cdot)$ is analytic and 
\begin{equation}
\label{eq:signCheck}
	\forall \xi \in \mathcal{D}_2(x)\, ,\
	\text{ we have }\ \widetilde{g}(x;\xi)\, =\, \xi \cdot f(x;\xi)\, .
\end{equation}
\end{lemma}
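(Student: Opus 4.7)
The plan is to promote the function $g(x;\cdot)$ of Lemma \ref{lem:sqrtContinuationB} to an analytic function on the larger domain $\mathcal{D}_4(x)$, by showing that its would-be branch cuts on the real intervals $(-\infty,c_1(x))$ and $(d_2(x),\infty)$ are in fact removable. The key observation is that on each of those two intervals, exactly two of the four square-root factors in (\ref{eq:gDefinition}) have simultaneous branch discontinuities across the real axis, and the two sign flips combine to leave the product single-valued.

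First I would fix $t\in(-\infty,c_1(x))$ and read off the boundary values of each factor. Using the principal branch from Remark \ref{rem:sqrt}, one has
\begin{equation*}
\lim_{\eta\to 0^+}\sqrt{(t\pm i\eta)-c_j(x)}\, =\, \pm i\,\sqrt{c_j(x)-t}\quad (j=1,2),
\end{equation*}
while $\sqrt{d_j(x)-(t\pm i\eta)}\to \sqrt{d_j(x)-t}$ with the same sign from both sides. Multiplying, the two $\pm i$'s contribute the same factor of $-1$ whether $\eta\to 0^+$ or $\eta\to 0^-$, so the limit in (\ref{eq:tildegDef3}) exists and equals $-x\sqrt{(c_1(x)-t)(c_2(x)-t)(d_1(x)-t)(d_2(x)-t)}$ independently of the direction of approach. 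The symmetric computation on $t\in(d_2(x),\infty)$, where now the two $\sqrt{d_j(x)-\xi}$ factors jump simultaneously, delivers the same cancellation.

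Next I would upgrade continuity to analyticity by the standard removable-segment argument. For each $t_0\in(-\infty,c_1(x))\cup(d_2(x),\infty)$, pick a small open disk $U\subset \mathcal{D}_4(x)$ centered at $t_0$; then $\widetilde{g}(x;\cdot)$ is continuous on $U$ and holomorphic on $U\setminus\R$, and Morera's theorem applied to triangles straddling $\R$ (opposite traversals of the real diameter cancel by continuity) forces holomorphy on all of $U$. Combined with the analyticity of $g(x;\cdot)$ on $\mathcal{D}_3(x)$ from Lemma \ref{lem:sqrtContinuationB}, this yields analyticity of $\widetilde{g}(x;\cdot)$ on the full domain $\mathcal{D}_4(x)$.

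Finally, for the identity (\ref{eq:signCheck}), observe that both $\widetilde{g}(x;\xi)$ and $\xi\cdot f(x;\xi)$ are holomorphic on the connected open set $\mathcal{D}_2(x)\cap\mathcal{D}_4(x)$ -- the former by the previous step, the latter by Lemma \ref{lem:sqrtContinuation} -- and they coincide on the open subset $\mathcal{D}_2(x)\cap\mathcal{D}_3(x)$ by (\ref{eq:gEQUALSf}) together with (\ref{eq:tildegDef2}). The identity theorem then propagates the equality to all of $\mathcal{D}_2(x)\cap\mathcal{D}_4(x)$. The main obstacle is nothing deep but the careful sign bookkeeping in the first step: one must confirm that on each of the two extension intervals, exactly two (rather than one or three) of the four square roots flip across $\R$, so that the product is even under the involution $\eta\mapsto -\eta$. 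Once that combinatorial fact is isolated, the rest of the argument is routine complex analysis.
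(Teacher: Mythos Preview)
Your proposal is correct and follows essentially the same route as the paper's proof: both compute the boundary values of $g(x;\cdot)$ on $(-\infty,c_1(x))$ and $(d_2(x),\infty)$ to see that exactly two of the four principal square-root factors flip sign across the real axis so that the product is continuous there, both invoke Morera's theorem to upgrade continuity across those segments to analyticity on $\mathcal{D}_4(x)$, and both finish by analytic continuation from $\mathcal{D}_2(x)\cap\mathcal{D}_3(x)$ using (\ref{eq:gEQUALSf}). Your conclusion lands on $\mathcal{D}_2(x)\cap\mathcal{D}_4(x)$ rather than on all of $\mathcal{D}_2(x)$; the paper instead asserts $\mathcal{D}_2(x)\subseteq\mathcal{D}_4(x)$, but since the two sets differ only on the real segment $(c_1(x),c_2(x)]$ where neither $f(x;\cdot)$ nor $\widetilde{g}(x;\cdot)$ is defined, your formulation is arguably the more careful one and nothing is lost.
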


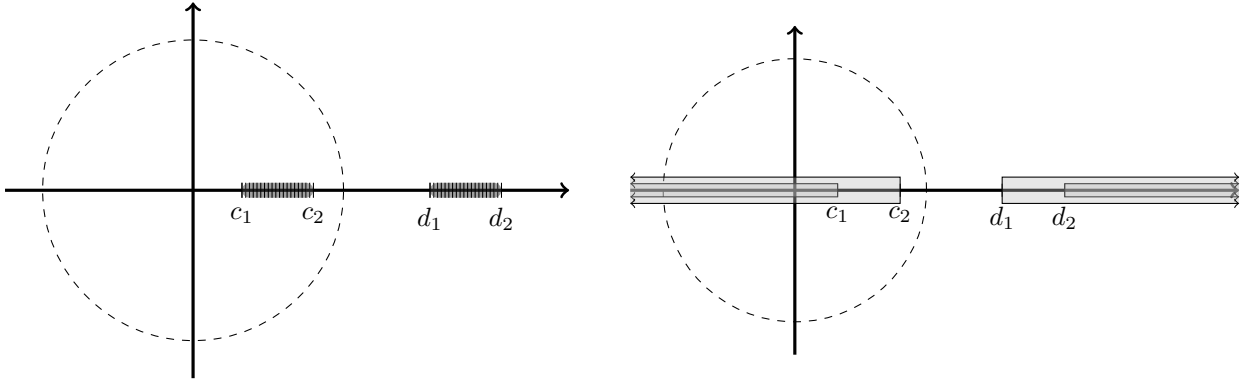
\begin{figure}
\begin{center}
\begin{tikzpicture}[xscale=1,yscale=1]
	\draw[very thick,->] (-2.5,0) -- (5,0);
	\draw[very thick,->] (0,-2.5) -- (0,2.5);
	\draw[line width=0.2cm, black!75!white, opacity=0.5] (0.75,0) -- (1.5,0);
	\foreach \t in {0.65,0.7,0.75,...,1.6}{
		\draw[thin] (\t,-0.1) -- (\t,0.1);}
	\fill[black!75!white, opacity=0.5] (0.75,0) -- (0.75,0.1) arc (90:270:0.1cm) -- cycle;
	\fill[black!75!white, opacity=0.5] (1.5,0) -- (1.5,0.1) arc (90:-90:0.1cm) -- cycle;
	\draw (0.65,0.1) -- (0.65,-0.1) node[below] {\small $c_1$};
	\draw (1.6,0.1) -- (1.6,-0.1) node[below] {\small $c_2$};
	\draw[dashed] (0,0) circle (2cm);
	\begin{scope}[xshift=2.5cm]
	\draw[line width=0.2cm, black!75!white, opacity=0.5] (0.75,0) -- (1.5,0);
	\foreach \t in {0.65,0.7,0.75,...,1.6}{
		\draw[thin] (\t,-0.1) -- (\t,0.1);}
	\fill[black!75!white, opacity=0.5] (0.75,0) -- (0.75,0.1) arc (90:270:0.1cm) -- cycle;
	\fill[black!75!white, opacity=0.5] (1.5,0) -- (1.5,0.1) arc (90:-90:0.1cm) -- cycle;
	\draw (0.65,0.1) -- (0.65,-0.1) node[below] {\small $d_1$};
	\draw (1.6,0.1) -- (1.6,-0.1) node[below] {\small $d_2$};
	\end{scope}
\begin{scope}[xshift=8cm,xscale=0.875,yscale=0.875]
	\draw[dashed] (0,0) circle (2cm);
	\draw[very thick,->] (-2.5,0) -- (6.75,0);
	\draw[very thick,->] (0,-2.5) -- (0,2.5);
	\draw (0.65,0.1) -- (0.65,-0.1) node[below] {\small $c_1$};
	\fill[black!25!white,opacity=0.375] (-2.5,0.1) -- (0.65,0.1) -- (0.65,-0.1) -- (-2.5,-0.1) -- cycle;
	\draw[<->,thin] (-2.5,0.1) -- (0.65,0.1) -- (0.65,-0.1) -- (-2.5,-0.1);
	\draw (1.6,0.1) -- (1.6,-0.1) node[below] {\small $c_2$};
	\fill[black!25!white,opacity=0.375] (-2.5,0.2) -- (1.6,0.2) -- (1.6,-0.2) -- (-2.5,-0.2) -- cycle;
	\draw[<->,thin] (-2.5,0.2) -- (1.6,0.2) -- (1.6,-0.2) -- (-2.5,-0.2);
	\begin{scope}[xshift=2.5cm]
	\draw (1.6,0.1) -- (1.6,-0.1) node[below] {\small $d_2$};
	\fill[black!25!white,opacity=0.375] (4.25,0.1) -- (1.6,0.1) -- (1.6,-0.1) -- (4.25,-0.1) -- cycle;
	\draw[<->,thin] (4.25,0.1) -- (1.6,0.1) -- (1.6,-0.1) -- (4.25,-0.1);
	\draw (0.65,0.1) -- (0.65,-0.1) node[below] {\small $d_1$};
	\fill[black!25!white,opacity=0.375] (4.25,0.2) -- (0.65,0.2) -- (0.65,-0.2) -- (4.25,-0.2) -- cycle;
	\draw[<->,thin] (4.25,0.2) -- (0.65,0.2) -- (0.65,-0.2) -- (4.25,-0.2);
	\end{scope}
\end{scope}
\end{tikzpicture}
\caption{\textsl{The branch cuts for the function $\widetilde{g}(x;\cdot)$ are shown on the left,
which is an analytic continuation of the function $f(x;\xi)=\sqrt{\mathcal{Q}_1(\xi)/\xi^2}$
defined in a neighborhood of $\mathcal{C}(0;1)$.
In the figure on the right we show the {\em a priori} overlapping branch-cuts of the original
analytic continuation $g(x;\cdot)$.
\label{fig:sqrtNew}
}}
\end{center}
\end{figure}
The domain for the function $\widetilde{g}(x;\cdot)$ is as in the left-hand picture in Figure \ref{fig:sqrtNew}.
We prove Lemmas \ref{lem:sqrtContinuationB} and \ref{lem:sqrtContinuationC} in 
Appendix \ref{sec:Deform}.
Then equation (\ref{eq:alphaIntegralSixSevenths}) may be rewritten as 
\begin{equation}
\label{eq:alphaIntegralSevenEighths}
	\alpha(w,x^2)\,  =\,
\oint_{\mathcal{C}(0;1)}  \frac{\widetilde{g}(x;\xi)+w\cdot \xi}
	{\mathcal{Q}_2(x,w;\xi)}\, \cdot \frac{d\xi}{2\pi i}\, ,
\end{equation}
where the roots of $\mathcal{Q}_2(x,w;\cdot)$ are $a_1(x;w)$, $a_2(x;w)$, $b_1(x;w)$ and $b_2(x;w)$,
and where the domain of $\widetilde{g}(x;\cdot)$ is all of $\C$ except for the union 
of the two intervals $[c_1(x),c_2(x)]$ and $[d_1(x),d_2(x)]$.
Thus we obtain the region  as in Figure \ref{fig:contour}.
This is the region where we may deform the contour $\mathcal{C}(0;1)$.

\section{Main result}
\label{sec:Main}
Using equations (\ref{eq:Q2factorization}) and (\ref{eq:alphaIntegralSevenEighths}), we know
\begin{equation}
\label{eq:alphaIntegralSevenEights}
	\alpha(w,x^2)\,  =\,
\oint_{\mathcal{C}(0;1)}  \frac{w\cdot \xi+\widetilde{g}(x;\xi)}
{x^2\cdot \big(\xi  - \mathcal{A}_1(x,w)\big)\big(\xi-\mathcal{A}_2(x,w)\big)\big(\xi-b_1(x,w)\big)\big(\xi-b_2(x,w)\big)}\, 
\cdot \frac{d\xi}{2\pi i}\, .
\end{equation}
We will deform the contour from $\mathcal{C}(0;1)$ into the open disk $\mathcal{U}(0;1)$ shrinking to contours around the poles
$\mathcal{A}_1(x,w)$ and $\mathcal{A}_2(x,w)$ and also around the
branch cut $[c_1(x),c_2(x)]$ as in Figure \ref{fig:Deformation}.
\begin{lemma}
\label{lem:residue1}
For $x,w \in (0,\infty)$ satisying $4x+w^2<1$ we have
\begin{equation}
	\alpha(w,x^2)\, =\, \mathcal{A}_1(x,w) + \mathcal{A}_2(x,w)\, ,
\end{equation}
for the function $\mathcal{A}_1(x,w)$ given by the residue theorem
\begin{equation}
\label{eq:A1formula}
\begin{split}
	\mathcal{A}_1(x,w)\, &=\, \frac{w\cdot a_1(x,w)+\widetilde{g}\big(x;a_1(x,w)\big)}
{x^2\cdot \big(a_1(x,w)-a_2(x,w)\big)\big(a_1(x,w)-b_1(x,w)\big)\big(a_1(x,w)-b_2(x,w)\big)}\\[5pt]
&\qquad
+\frac{w\cdot a_2(x,w)+\widetilde{g}\big(x;a_2(x,w)\big)}
{x^2\cdot \big(a_2(x,w)-a_1(x,w)\big)\big(a_2(x,w)-b_1(x,w)\big)\big(a_2(x,w)-b_2(x,w)\big)}\, ,
\end{split}
\end{equation}
and for the function $\mathcal{A}_2(x,w)$ given by the integral
\begin{equation}
\label{eq:A2formula}
	\mathcal{A}_2(x,w)\, =\, \frac{1}{\pi}\cdot \int_{c_1(x)}^{c_2(x)} \mathcal{I}_2(x,w;r)\, dr\, ,
\end{equation}
where
\begin{equation}
	\forall r \in [c_1(x),c_2(x)]\, ,\ \text{ we have }\ \mathcal{I}_2(x,w;r)\, =\, \frac{i\cdot \widetilde{g}_{+}(x;r)-i\cdot \widetilde{g}_-(x;r)}
	{2\mathcal{Q}_2(x,w;r)}\, , 
\end{equation}
for the functions $\widetilde{g}_+(x;\cdot)$ and $\widetilde{g}_-(x;\cdot)$ defined such that
\begin{equation}
	\forall r \in [c_1(x),c_2(x)]\, ,\ \text{ we have }\ \widetilde{g}_{\pm}(x;r)\, =\, \lim_{\epsilon \to 0^+} \widetilde{g}(x;r\pm i \epsilon)\, .
\end{equation}
\end{lemma}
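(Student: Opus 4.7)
The plan is to deform the contour $\mathcal{C}(0;1)$ in equation~(\ref{eq:alphaIntegralSevenEights}) inward through the open unit disk and collect the pieces that make up $\mathcal{A}_1(x,w)$ and $\mathcal{A}_2(x,w)$. The hypothesis $4x+w^2<1$ is equivalent to $x \in (0,1/4)$ and $w \in (0,\sqrt{1-4x})$, so Lemma~\ref{lem:rootsOrder} applies and gives the ordering $0<a_1<c_1<c_2<a_2<1<b_1<d_1<d_2<b_2$. Hence only $a_1(x,w),a_2(x,w)$ among the four zeros of $\mathcal{Q}_2(x,w;\cdot)$, and only $[c_1(x),c_2(x)]$ among the two branch cuts of $\widetilde{g}(x;\cdot)$, lie inside $\mathcal{C}(0;1)$. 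Moreover, $\mathcal{Q}_2(x,w;\cdot)$ has no zero on $[c_1(x),c_2(x)]$, so the integrand of~(\ref{eq:alphaIntegralSevenEights}) is meromorphic on the open set $\mathcal{U}(0;1)\setminus\bigl([c_1(x),c_2(x)]\cup\{a_1(x,w),a_2(x,w)\}\bigr)$. This justifies replacing $\mathcal{C}(0;1)$, by a homotopy inside this open set, with the union of the three positively oriented contours $\Gamma_1(\delta),\Gamma_2(\delta),\Gamma_3(\delta)$ drawn in Figure~\ref{fig:Deformation}, for any sufficiently small $\delta>0$.

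The circles $\Gamma_1(\delta)$ and $\Gamma_2(\delta)$ each enclose one simple pole. Because $\widetilde{g}(x;\cdot)$ is analytic in a neighborhood of both $a_1(x,w)$ and $a_2(x,w)$ (these points are interior to $\mathcal{D}_4(x)$ by Lemma~\ref{lem:rootsOrder}), applying the residue theorem with the factorization~(\ref{eq:Q2factorization}) of $\mathcal{Q}_2$ produces the closed formula~(\ref{eq:A1formula}) for $\mathcal{A}_1(x,w)$ directly as the sum of two simple-pole residues.

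For the rectangular contour $\Gamma_3(\delta)$ I then send $\delta\to 0^{+}$. The bottom edge, traversed from $c_1(x)-i\delta$ to $c_2(x)-i\delta$, contributes $(2\pi i)^{-1}\int_{c_1(x)}^{c_2(x)}\bigl(wr+\widetilde{g}_{-}(x;r)\bigr)/\mathcal{Q}_2(x,w;r)\,dr$ in the limit, while the top edge, traversed in the reverse direction, contributes the analogous integral with $\widetilde{g}_{+}$ and an overall minus sign. The polynomial term $wr$ and the rational factor $1/\mathcal{Q}_2(x,w;r)$ extend analytically across the cut $[c_1(x),c_2(x)]$ and so cancel between the two horizontal edges; only the jump $\widetilde{g}_{-}(x;r)-\widetilde{g}_{+}(x;r)$ survives. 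Rewriting $1/(2\pi i)=i/(2\pi)$ then gives exactly formula~(\ref{eq:A2formula}) for $\mathcal{A}_2(x,w)$. The main technical point worth flagging is the justification that the two vertical sides of $\Gamma_3(\delta)$ contribute $o(1)$ as $\delta\to 0^{+}$; this reduces to bounding $\widetilde{g}(x;\xi)$ in a small disk around each endpoint $c_j(x)$, which by the definition~(\ref{eq:gDefinition}) of $g$ (and hence of $\widetilde{g}$) behaves like $O\bigl(\sqrt{|\xi-c_j(x)|}\bigr)$, so the vertical side contributions are $O(\sqrt{\delta})$ as required.
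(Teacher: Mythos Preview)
Your argument is correct and follows essentially the same route as the paper's proof in Appendix~\ref{sec:residue}: deform $\mathcal{C}(0;1)$ to $\Gamma_1(\delta)\cup\Gamma_2(\delta)\cup\Gamma_3(\delta)$, read off $\mathcal{A}_1$ from the two simple-pole residues, and obtain $\mathcal{A}_2$ from the jump of $\widetilde{g}$ across the cut after noting that the analytic part $w\xi/\mathcal{Q}_2$ cancels and the short vertical sides contribute $o(1)$. One small slip to fix: you write $1/(2\pi i)=i/(2\pi)$, but in fact $1/(2\pi i)=-i/(2\pi)$; with the correct sign, $\tfrac{1}{2\pi i}\bigl(\widetilde{g}_{-}-\widetilde{g}_{+}\bigr)=\tfrac{1}{\pi}\cdot\tfrac{i\widetilde{g}_{+}-i\widetilde{g}_{-}}{2}$, which is indeed formula~(\ref{eq:A2formula}).
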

This lemma is elementary and will be proved in Appendix \ref{sec:residue}.
Note that since $w\cdot \xi/\mathcal{Q}_2(x,\xi)$ is analytic for $\xi$ in a neighborhood
of $[c_1(x),c_2(x)]$, its contribution to the integral along $\Gamma_3(\delta)$ vanishes.
See Figure \ref{fig:Deformation}.
That is why it is not included in $\mathcal{I}_2(x,w;r)$.

Note that $\mathcal{A}_1(x,w)$ and $\mathcal{A}_2(x,w)$ are roots of $\mathcal{Q}_2(x,w;\xi)$ which is 
$\mathcal{Q}_1(x;\xi)-w^2 \xi^2$ as in (\ref{eq:calQ2Def}) and (\ref{eq:Q2factorization}). 
But $\big(\widetilde{g}(x;\xi)\big)^2 = \mathcal{Q}_1(x;\xi)$ 
by (\ref{eq:fDef}) and (\ref{eq:signCheck}).
Therefore, $w\cdot a_j(x,w)+\widetilde{g}(x;a_j(x,w))$ may be simplified for $j=1,2$
in the formula for $\mathcal{A}_1(x,w)$.
\begin{lemma} For $x,w \in (0,\infty)$ satisfying $4x+w^2<1$ we have
\begin{equation}
\label{eq:A1formula2}
\begin{split}
	\mathcal{A}_1(x,w)\, &=\, \frac{2w\cdot a_1(x,w)}
{x^2\cdot \big(a_1(x,w)-a_2(x,w)\big)\big(a_1(x,w)-b_1(x,w)\big)\big(a_1(x,w)-b_2(x,w)\big)}\\[5pt]
&\qquad
+\frac{2w\cdot a_2(x,w)}
{x^2\cdot \big(a_2(x,w)-a_1(x,w)\big)\big(a_2(x,w)-b_1(x,w)\big)\big(a_2(x,w)-b_2(x,w)\big)}\, ,
\end{split}
\end{equation}
or simplifying
\begin{equation}
\label{eq:A1formula3}
	a_1(x,w)\, =\, \frac{2w\cdot \big(b_1(x,w)\cdot b_2(x,w) - a_1(x,w) \cdot a_2(x,w)\big)}
{x^2\prod_{j=1}^{2} \prod_{k=1}^{2} \big(b_j(x,w)-a_k(x,w)\big)}\, .
\end{equation}
\end{lemma}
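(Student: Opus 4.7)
The strategy is to simplify $\widetilde{g}(x;a_j(x,w))$ using that $a_j$ is a root of $\mathcal{Q}_2$, plug the simplification into (\ref{eq:A1formula}), and then combine the resulting pair of partial fractions into a single ratio.

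First, because $a_j(x,w)$ is by construction a root of $\mathcal{Q}_2(x,w;\xi)=\mathcal{Q}_1(x;\xi)-w^2\xi^2$ (equation (\ref{eq:calQ2Def})), one has $\mathcal{Q}_1(x;a_j(x,w))=w^2 a_j(x,w)^2$. On the other hand, equations (\ref{eq:fDef}) and (\ref{eq:signCheck}) give $\widetilde{g}(x;\xi)^2=\xi^2 f(x;\xi)^2=\mathcal{Q}_1(x;\xi)$ on the annulus $\mathcal{D}_2(x)$, and by the identity theorem this identity between analytic functions extends to all of $\mathcal{D}_4(x)$. In particular, $\widetilde{g}(x;a_j(x,w))^2=w^2 a_j(x,w)^2$, so $\widetilde{g}(x;a_j(x,w))=\pm w\,a_j(x,w)$. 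For $a_2(x,w)\in(c_2(x),1)\subset\mathcal{D}_3(x)$, Lemma \ref{lem:sqrtContinuationC} gives $\widetilde{g}(x;a_2)=g(x;a_2)$, and the explicit product formula (\ref{eq:gDefinition}) together with the ordering in Lemma \ref{lem:rootsOrder} shows each of the four radicands $a_2-c_1,a_2-c_2,d_1-a_2,d_2-a_2$ is a positive real, forcing $\widetilde{g}(x;a_2(x,w))=+w\,a_2(x,w)$. For $a_1(x,w)\in(0,c_1(x))$, one uses the limit definition (\ref{eq:tildegDef3}) together with a winding-number argument that tracks the phase of the analytic continuation of $\widetilde{g}$ along a path in $\mathcal{D}_4(x)$ joining the annulus to $a_1$ and going around the branch cut $[c_1(x),c_2(x)]$; this pins down the sign and yields $\widetilde{g}(x;a_1(x,w))=+w\,a_1(x,w)$. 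Substituting $\widetilde{g}(x;a_j(x,w))=+w\,a_j(x,w)$ into (\ref{eq:A1formula}) gives $w\,a_j(x,w)+\widetilde{g}(x;a_j(x,w))=2w\,a_j(x,w)$ for each $j$, which is (\ref{eq:A1formula2}).

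For the collapse of (\ref{eq:A1formula2}) into (\ref{eq:A1formula3}), I would combine the two partial fractions over the common denominator $x^2(a_1-b_1)(a_1-b_2)(a_2-b_1)(a_2-b_2)$, writing $(a_2-a_1)=-(a_1-a_2)$ to extract a common prefactor $2w/[x^2(a_1-a_2)]$ with opposite signs from the two terms. The resulting bracketed numerator is
\[
a_1(a_2-b_1)(a_2-b_2)-a_2(a_1-b_1)(a_1-b_2).
\]
Expanding each product as $(a_k-b_1)(a_k-b_2)=a_k^2-(b_1+b_2)a_k+b_1b_2$, the mixed $\pm(b_1+b_2)a_1a_2$ terms cancel and the rest collects to $a_1a_2(a_2-a_1)+b_1b_2(a_1-a_2)=(a_1-a_2)(b_1b_2-a_1a_2)$. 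The factor $(a_1-a_2)$ cancels against the prefactor, leaving $2w(b_1b_2-a_1a_2)$ in the numerator. Since $(a_k-b_j)=-(b_j-a_k)$ and there are four such factors in the denominator, $(a_1-b_1)(a_1-b_2)(a_2-b_1)(a_2-b_2)=\prod_{j,k}(b_j-a_k)$ (four sign flips produce no net sign), yielding (\ref{eq:A1formula3}).

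The main obstacle is fixing the sign $\widetilde{g}(x;a_1(x,w))=+w\,a_1(x,w)$, because $a_1(x,w)$ lies inside the inner hole of the annulus $\mathcal{D}_2(x)$ on which $\widetilde{g}=\xi f$ is naturally defined, so its value must be obtained by analytic continuation through $\mathcal{D}_4(x)$ around the branch cut $[c_1(x),c_2(x)]$; careful bookkeeping of the winding number of $\mathcal{Q}_1(x;\xi)$ around zero along the continuation path is needed to determine the sign of the square root. Once both signs are in hand, the algebraic simplification from (\ref{eq:A1formula}) through (\ref{eq:A1formula2}) to (\ref{eq:A1formula3}) is a routine calculation.
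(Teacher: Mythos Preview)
Your approach is essentially the paper's: the text immediately preceding the lemma already points out that $a_j(x,w)$ is a root of $\mathcal{Q}_2(x,w;\cdot)=\mathcal{Q}_1(x;\cdot)-w^2\xi^2$ and that $\widetilde{g}(x;\xi)^2=\mathcal{Q}_1(x;\xi)$, so that $w\,a_j+\widetilde{g}(x;a_j)$ simplifies. The paper defers the details to Appendix~\ref{sec:residue} but never spells them out there, so your write-up is in fact more complete. Your treatment of $a_2$ via the explicit product (\ref{eq:gDefinition}) and the ordering of Lemma~\ref{lem:rootsOrder} is clean, and your algebra collapsing (\ref{eq:A1formula2}) into (\ref{eq:A1formula3}) is correct.

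The genuine gap is exactly the one you flag: the sign of $\widetilde{g}(x;a_1(x,w))$. You assert $+w\,a_1$ and wave at a winding-number argument, but you never execute it. In fact the paper already computes $\widetilde{g}$ on $(-\infty,c_1(x))$ in the proof of Lemma~\ref{lem:sqrtContinuationC}: for such $\xi$ one has
\[
\widetilde{g}(x;\xi)\,=\,-x\,\sqrt{c_1(x)-\xi}\,\sqrt{c_2(x)-\xi}\,\sqrt{d_1(x)-\xi}\,\sqrt{d_2(x)-\xi}\,<\,0,
\]
which, taken at $\xi=a_1(x,w)\in(0,c_1(x))$, gives $\widetilde{g}(x;a_1)=-w\,a_1$ rather than $+w\,a_1$, and then the first residue in (\ref{eq:A1formula}) would vanish instead of double. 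Either your winding argument has to be carried out in full and reconciled with that explicit formula, or you should use the formula directly and explain why it yields the sign the lemma requires. As written, the crucial sign at $a_1$ is asserted, not proved, and appears to conflict with the paper's own earlier computation; this is precisely the step where the argument must be made airtight.
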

We will prove this lemma in Appendix \ref{sec:residue}.
Recall from equation (\ref{eq:Q2inversive}) that $b_2(x,w) = 1/a_1(x,w)$ and $b_1(x,w)=1/a_2(x,w)$.
From this one may simplify a bit more.
\begin{lemma} For $x,w \in (0,\infty)$ satisfying $4x+w^2<1$ we have
\begin{equation}
\label{eq:A1formula4}
\begin{split}
	\mathcal{A}_1(x,w)\, =\, \frac{2w\cdot a_1(x,w) \cdot a_2(x,w) \Big(1+ a_1(x,w) \cdot a_2(x,w)\Big)}
{x^2 \cdot\Big(1-a_1(x,w)\cdot a_2(x,w)\Big) \Big(1-\big(a_1(x,w)\big)^2\Big)
\Big(1-\big(a_2(x,w)\big)^2\Big)}\, .
\end{split}
\end{equation}
\end{lemma}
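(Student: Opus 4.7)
The plan is to start from equation (\ref{eq:A1formula3}) and perform a direct algebraic simplification, using the inversive relations (\ref{eq:Q2inversive}), namely $b_2(x,w)=1/a_1(x,w)$ and $b_1(x,w)=1/a_2(x,w)$. To keep notation light, I will write $a_j$ for $a_j(x,w)$ and $b_j$ for $b_j(x,w)$ throughout.

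First I would simplify the numerator of (\ref{eq:A1formula3}). Since $b_1 b_2 = 1/(a_1 a_2)$, we have
\begin{equation*}
b_1 b_2 - a_1 a_2 \,=\, \frac{1-(a_1 a_2)^2}{a_1 a_2}\,=\,\frac{(1-a_1 a_2)(1+a_1 a_2)}{a_1 a_2}\, .
\end{equation*}
Next I would simplify each of the four factors in the denominator by the same substitution:
\begin{equation*}
b_1 - a_1 = \frac{1-a_1 a_2}{a_2}\, ,\quad b_1 - a_2 = \frac{1-a_2^2}{a_2}\, ,\quad b_2 - a_1 = \frac{1-a_1^2}{a_1}\, ,\quad b_2 - a_2 = \frac{1-a_1 a_2}{a_1}\, .
\end{equation*}
Multiplying these four factors gives
\begin{equation*}
\prod_{j=1}^{2}\prod_{k=1}^{2}(b_j - a_k)\,=\,\frac{(1-a_1 a_2)^2\,(1-a_1^2)\,(1-a_2^2)}{a_1^2\, a_2^2}\, .
\end{equation*}

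Then I would substitute both expressions back into (\ref{eq:A1formula3}) and cancel. The factor $(1-a_1 a_2)$ appears once in the numerator and squared in the denominator, so one power cancels; the factor $a_1 a_2$ appears once in the numerator's denominator while $a_1^2 a_2^2$ appears in the denominator's denominator, yielding an overall $a_1 a_2$ in the numerator after combining. The resulting expression is exactly (\ref{eq:A1formula4}).

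The argument is entirely routine once one observes that the inversive relations turn every difference $b_j - a_k$ into a factor of the form $(1 - a_j a_k)/a_k$ (up to relabeling). I do not anticipate any genuine obstacle; the only place requiring minor care is bookkeeping the powers of $a_1 a_2$ and verifying that the cancellation of one copy of $(1-a_1 a_2)$ is valid, which it is because $a_1 a_2 \in (0,1)$ under the hypothesis $4x + w^2 < 1$ (by Lemma \ref{lem:rootsOrder}, since $0 < a_1 < 1$ and $0 < a_2 < 1$), so the factor is nonzero and the division is legitimate.
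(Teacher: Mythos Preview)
Your proposal is correct and follows essentially the same approach as the paper: the text immediately preceding the lemma instructs the reader to use the inversive relations $b_2=1/a_1$ and $b_1=1/a_2$ from (\ref{eq:Q2inversive}) to ``simplify a bit more'' from (\ref{eq:A1formula3}), and your computation carries this out cleanly. The only thing to note is that the paper defers the proof to Appendix~\ref{sec:residue} without actually spelling out the algebra there, so your write-up is in fact more explicit than what appears in the paper.
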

\begin{figure}
\begin{center}
\begin{tikzpicture}[xscale=4,yscale=4]
	\draw[dashed] (2,0) arc (0:15:2cm);
	\draw[dashed] (2,0) arc (0:-15:2cm);
	\draw[very thick,->] (-0.5,0) -- (2.5,0);
	\draw[very thick,->] (0,-0.675) -- (0,0.675);
	\draw (0.65,0.1) -- (0.65,-0.1) node[below] {\small $c_1$};
	\draw (1.6,0.1) -- (1.6,-0.1) node[below] {\small $c_2$};
	\draw (0.65,0) -- (1,0.5) -- (1.6,0);
	\draw (1,0.5) node[] {$\bullet$};
	\draw[] (0.65,0) +(0:0.2cm) arc (0:55:0.2cm);
	\draw (0.65,0) +(47.5:0.2cm) node[rotate=-45] {\small $<$};
	\draw[] (1.6,0) +(0:0.2cm) arc (0:140:0.2cm);
	\draw (1.6,0) +(135:0.2cm) node[rotate=45] {\small $<$};
	\draw[dotted] (1,0) -- (1,0.5);
	\draw (1,0.5) node[above] {$\xi = r + i \epsilon$};
	\draw (0.65,0) +(0:0.2cm) node[above left] {\small $\theta_1$};
	\draw (1.6,0) +(0:0.2cm) node[above left] {\small $\theta_2$};
\end{tikzpicture}
\caption{\textsl{Consideration of the phase of the function,  $\widetilde{g}(x;\xi) = \sqrt{\mathcal{Q}_1(x;\xi)}$, for $\xi=r+i\epsilon$
with $c_1(x)\leq r\leq c_2(x)$.
If we write $\widetilde{g}(x;\xi)$ as $e^{i \varphi} R$ for $R \geq 0$ and $\varphi \in \R$
then by ``phase'' we mean $e^{i\varphi}$. For $r \in \R \setminus \{c_1(x),c_2(x),d_1(x),d_2(x)\}$, and taking 
$\xi = r + i\delta$ the phase $e^{i\varphi}$
converges to a number $\Omega_+(r) \in \{1,i,-1,-i\}$ in the $\delta \to 0^+$ limit. Similarly for the limit of the phase from the lower half-plane $\Omega_-(r)$.
\label{fig:sqrtNew2}
}}
\end{center}
\end{figure}
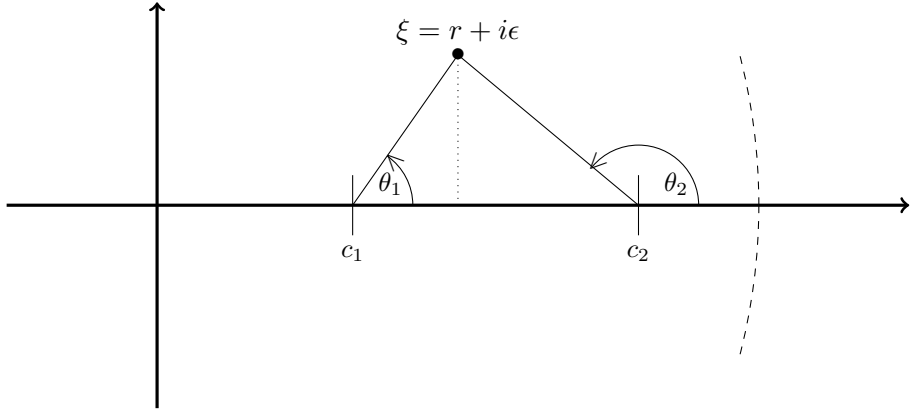
This elementary lemma will also be proved in Appendix \ref{sec:residue}.

\begin{lemma}
\label{lem:calA2penultimate}
For $x,w \in (0,\infty)$ satisfying $4x+w^2<1$, it is the case that
\begin{equation}
	\forall r \in [c_1(x),c_2(x)]\, ,\ \text{ we have }\	\pm i \cdot \widetilde{g}_{\pm}(x,r)\,
	=\, -\sqrt{-\mathcal{Q}_1(x;r)}\, ,
\end{equation}
where note that $\mathcal{Q}_1(x;r)\leq 0$ for all $r \in [c_1(x),c_2(x)]$. In particular 
\begin{equation}
\label{eq:A2formula2}
	\mathcal{A}_2(x,w)\, =\, \frac{1}{\pi}\, \int_{c_1(x)}^{c_2(x)} \frac{\sqrt{-\mathcal{Q}_1(x;r)}}{-\mathcal{Q}_2(x,w;r)}\, dr\, ,
\end{equation}
where also  $\mathcal{Q}_2(x,w;r)<0$
for $r \in [c_1(x),c_2(x)]$, since $a_1(x,r)<c_1(x)<c_2(x)<a_2(x,r)$.
\end{lemma}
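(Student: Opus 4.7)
The plan is to compute the boundary values $\widetilde{g}_{\pm}(x;r)$ explicitly by tracking the phase of each of the four square-root factors in the product formula for $g(x;\xi)$ from Lemma \ref{lem:sqrtContinuationB}, and then substitute into the expression for $\mathcal{I}_2$ from Lemma \ref{lem:residue1}.

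First I would fix $r \in [c_1(x),c_2(x)]$ and take $\xi = r + i\epsilon$ with $\epsilon > 0$ small. Because the upper half-plane is contained in $\mathcal{D}_3(x)$, the extension $\widetilde{g}$ agrees with $g$ on this approach, so
\begin{equation*}
\widetilde{g}(x;\xi) \, =\, x\, \sqrt{\xi - c_1(x)}\, \sqrt{\xi - c_2(x)}\, \sqrt{d_1(x) - \xi}\, \sqrt{d_2(x) - \xi}\, .
\end{equation*}
I would then examine the phase of each factor in the $\epsilon \to 0^+$ limit, using the convention from Remark \ref{rem:sqrt}. The factor $\xi - c_1(x) = (r-c_1(x)) + i\epsilon$ has positive real part and small positive imaginary part, so its phase tends to $0$ and the square root tends to $\sqrt{r-c_1(x)}$. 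Similarly, $d_1(x) - \xi$ and $d_2(x) - \xi$ both have positive real part (since $r < 1 < d_1(x) < d_2(x)$) and small negative imaginary part, so the square roots tend to $\sqrt{d_1(x)-r}$ and $\sqrt{d_2(x)-r}$. The nontrivial factor is $\xi - c_2(x) = (r - c_2(x)) + i\epsilon$: its real part is non-positive and its imaginary part is a small positive number, so the argument tends to $\pi$ from below, and the square root tends to $e^{i\pi/2}\sqrt{c_2(x)-r} = i\sqrt{c_2(x)-r}$.

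Collecting the phases gives $\widetilde{g}_+(x;r) = i\cdot x\sqrt{(r-c_1(x))(c_2(x)-r)(d_1(x)-r)(d_2(x)-r)}$. Squaring and comparing with the factorization in Lemma \ref{lem:calQ1roots} shows that the quantity under the radical is exactly $-\mathcal{Q}_1(x;r)/x^2 \cdot x^2 = -\mathcal{Q}_1(x;r)$ (the three sign flips from $(r-c_2(x))$, $(r-d_1(x))$, $(r-d_2(x))$ combine with the overall $x^2$ to yield $-\mathcal{Q}_1(x;r) \geq 0$), so $\widetilde{g}_+(x;r) = i\sqrt{-\mathcal{Q}_1(x;r)}$. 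The same analysis for $\xi = r - i\epsilon$ reverses only the phase of $\sqrt{\xi-c_2(x)}$, sending its argument to $-\pi$ from above, so the square root tends to $-i\sqrt{c_2(x)-r}$, and hence $\widetilde{g}_-(x;r) = -i\sqrt{-\mathcal{Q}_1(x;r)}$. Multiplying by $\pm i$ establishes the first identity.

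Substituting into the formula for $\mathcal{I}_2(x,w;r)$ from Lemma \ref{lem:residue1} gives
\begin{equation*}
\mathcal{I}_2(x,w;r)\, =\, \frac{i\widetilde{g}_+(x;r) - i\widetilde{g}_-(x;r)}{2\mathcal{Q}_2(x,w;r)}\, =\, \frac{-2\sqrt{-\mathcal{Q}_1(x;r)}}{2\mathcal{Q}_2(x,w;r)}\, =\, \frac{\sqrt{-\mathcal{Q}_1(x;r)}}{-\mathcal{Q}_2(x,w;r)}\, ,
\end{equation*}
which yields equation (\ref{eq:A2formula2}) after integrating from $c_1(x)$ to $c_2(x)$ and dividing by $\pi$. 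For the sign of $\mathcal{Q}_2(x,w;r)$ on this interval I would invoke the factorization (\ref{eq:Q2factorization}) together with the ordering from Lemma \ref{lem:rootsOrder}: for $r \in [c_1(x),c_2(x)]$ the factor $r - a_1(x,w)$ is positive while $r-a_2(x,w)$, $r-b_1(x,w)$, and $r-b_2(x,w)$ are all negative, so $\mathcal{Q}_2(x,w;r) = x^2 \cdot (+)(-)(-)(-) < 0$, justifying the final assertion of the lemma. The only delicate step is the phase tracking across the branch cut, but once the convention from Remark \ref{rem:sqrt} is applied consistently the signs fall out automatically.
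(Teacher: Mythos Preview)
Your proof is correct and follows essentially the same approach as the paper: both arguments track the phase of each of the four square-root factors in the product formula for $g(x;\xi)$, identify $\sqrt{\xi-c_2(x)}$ as the single factor whose limiting phase is $\pm i$ (the others being real and positive), and then substitute the resulting boundary values $\widetilde{g}_{\pm}(x;r)=\pm i\sqrt{-\mathcal{Q}_1(x;r)}$ into the expression for $\mathcal{I}_2$ from Lemma~\ref{lem:residue1}. Your additional explicit sign check for $\mathcal{Q}_2(x,w;r)$ via the factorization~(\ref{eq:Q2factorization}) and Lemma~\ref{lem:rootsOrder} is a nice touch that the paper only states without spelling out.
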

We will give the details of the proof in Appendix \ref{sec:residue}. But we may indicate the key to the calculation by referring to
Figure \ref{fig:sqrtNew2}.
Recall from equation (\ref{eq:calQ1roots}) and (\ref{eq:gDefinition}) that $\widetilde{g}(x;\cdot)$ is an 
analytic continuations of $\sqrt{\mathcal{Q}_1(x,\cdot)}$. 
More precisely, we had a function $f(x;\cdot)$ defined on $\mathcal{D}_2(x)$
such that $f(x;\xi) = \sqrt{\mathcal{Q}_1(x;\xi)/\xi^2}$.
Then $\widetilde{g}(x;\cdot)$ is defined on the region $\mathcal{D}_4(x)$ and $\widetilde{g}(x;\xi) = \xi\cdot f(x;\xi)$ on $\mathcal{D}_2(x)$  (which is a subset of $\mathcal{D}_4(x)$).
It is easy to see that
\begin{equation}
	\forall r \in [c_1(x),c_2(x)]\, ,\ \text{ we have } \lim_{\epsilon \to 0^+} \left|\widetilde{g}(x;r+i\epsilon)\right|\,
	=\, 	\lim_{\epsilon \to 0^+} \left|\widetilde{g}(x;r-i\epsilon)\right|\,
	=\, \sqrt{-\mathcal{Q}_1(x;r)}\, ,
\end{equation}
because $\mathcal{Q}_1(x;r)\leq 0$ for all $r \in [c_1(x),c_2(x)]$.
So all that remain is to determine the phase, meaning $\widetilde{g}_{\pm}(x;r)/|\widetilde{g}_{\pm}(x;r)|$.
Let us call that $\Omega_{\pm}(x;r)$. Then as in Figure 5, we may see that $\Omega_{\pm}(x;r) = \pm i$, mainly because
in the product of the four square roots, exactly one has an argument which is ``approximately negative,'' being such that the phase of the argument of that square root is equal to:
$e^{i (\pi - \delta)}$ for $\xi=r+i\epsilon$,
and $e^{-i(\pi-\delta)}$ for $\xi = r-i\epsilon$, where the quantity $\delta$
is such that $\delta \to 0^+$ when $\epsilon \to 0^+$.
Taking the square-roots we get  $\Omega_{\pm}(x;r) = \pm i$.
We give more careful details in Appendix \ref{sec:residue}.

Note that the integral equation (\ref{eq:A2formula2}) is an elliptic integral:  $\mathcal{A}_2(x,w)$ can be put into the form
\begin{equation}
\label{eq:A2formula2}
\begin{split}
&\hspace{-1cm}
	\frac{1}{\pi}\cdot \int_{c_1(x)}^{c_2(x)} \frac{x\cdot \sqrt{r-c_1(x)}\, \cdot \sqrt{c_2(x)-r}\,
\cdot \sqrt{d_1(x)-r}\, \cdot \sqrt{d_2(x)-r}}{-\mathcal{Q}_2(x,w;r)}\, dr\\[3pt]
&\hspace{2cm} =\,  \int_{c_1(x)}^{c_2(x)} \frac{\mathcal{R}(x,w;r)}{\sqrt{r-c_1(x)}\, \cdot \sqrt{c_2(x)-r}\,
\cdot \sqrt{d_1(x)-r}\, \cdot \sqrt{d_2(x)-r}}\, dr\, ,
\end{split}
\end{equation}
where $\mathcal{R}(x,w;\cdot) : \C \to \C \cup \{\infty\}$ is a rational (meromorphic function), and we note that the 
denominator on the left-hand-side is the square-root of a quartic function.
But for the Legendre normal form for elliptic integrals, we prefer to use a linear fractional transformation
to transform to a quartic of the form
\begin{equation}
\label{eq:quarticNormalForm}
(1-r^2)(1-k^2 r^2)\, =\, k^2 \cdot (r-1)(r+1)(r-k^{-1})(r+k^{-1})\, ,
\end{equation}
for some number $k \in (0,1)$.
In other words, for the normal form, the roots of the quartic should come in pairs which are additive inverses.
For $\mathcal{Q}_1(x,w;r)$, the roots come in pairs which are multiplicative reciprocals (\ref{eq:Q1inversive}).

To reduce an elliptic integral to normal form one generally applies one or more
linear fractional transfomations $L(r) = (ar+b)/(cr+d)$, for example with $ad-bc=1$
and $a,b,c,d \in \R$ having inverse $L^{-1}(s) = (ds-b)/(-cs+a)$, using a substitution $r=L^{-1}(s)$.
This is a good type of transformation for such a problem.
Precomposing with $L^{-1}$ has good properties for rational functions.
Such a mapping also takes the quartic function inside the square-root to another quartic function, 
modulo multiplying by an even power of the denominator $(-cs+a)$.
It should be mentioned that $dr=dL^{-1}(s)$ also produces an even power of $(-cx+a)$.

Therefore we note, that within the family of linear fractional transformations there is one (and its inverse)
which intertwines between pairs of numbers which are multiplicative reciprocals
and pairs of numbers which are additive inverses.
It is 
\begin{equation}
\label{eq:calLdef}
	\mathcal{L}(z)\, =\, \frac{z-1}{z+1}\, ,\ \text{ with }\ \mathcal{L}^{-1}(\xi)\, =\, \frac{1+\xi}{1-\xi}\, .
\end{equation}
This satisfies $\mathcal{L}(1/z) = -\mathcal{L}(z)$ and $\mathcal{L}(-z)=1/\mathcal{L}(z)$.
After performing this transformation and rescaling, we do replace the quartic $\mathcal{Q}_1(x;r)$
by the normal form of (\ref{eq:quarticNormalForm}), with
\begin{equation}
	k^2\, =\, \big(K_1(x)\big)^2\, ,\ \text{ where }\ K_1(x)\, =\, \sqrt{1-4x^2}\, .
\end{equation}
Then the path of integration is the interval between two of the roots of (\ref{eq:quarticNormalForm}),
$1$ and $1/k$. 
We do want the path of integration to be an invterval joining two of the roots of (\ref{eq:quarticNormalForm})
to get a complete elliptic integral, which is a bit simpler than the incomplete integrals.
Unfortunately, for the Legendre normal form of an elliptic integral, it is preferred that the 
path of integration is between the pair of roots $-1$ and $1$ (instead of between the pair $1$ and $1/k$). 
So, we use another linear fractional transformation to permute the roots cyclically, while also changing $k$
from $K_1(x)$ to another value. 
The linear fractional transformation is
\begin{equation}
	\Lambda(k;z)\, =\, \frac{\left(k^{-1/2}+1\right)\left(z-k^{-1/2}\right)}
{\left(k^{-1/2}-1\right)\left(z+k^{-1/2}\right)}\, ,\ \text{ with }\
	\Lambda^{-1}(k;\xi)\, =\, k^{-1/2}\, \cdot \frac{k^{-1/2}+1+\left(k^{-1/2}-1\right)\xi}
{k^{-1/2}+1-\left(k^{-1/2}-1\right)\xi}\, ,
\end{equation}
which has the property
\begin{equation}
	\Lambda(k;k^{-1})\, =\, 1\, ,\ \Lambda(k;1)\, =\, -1\, ,\ 
	\Lambda(k;-1)\, =\, -1/\mathcal{J}(k)\ \text{ and }\
	\Lambda(k;-k^{-1})\, =\, 1/\mathcal{J}(k)\, ,
\end{equation}
for the involution $\mathcal{J} : (0,1) \to (0,1)$ given by
\begin{equation}
	\mathcal{J}(k)\, =\, (1-k^{1/2})^2/(1+k^{1/2})^2\, .
\end{equation}
It may be worthwhile to note that $\mathcal{J}$ is a decreasing function on $(0,1)$ with fixed point $(\sqrt{2}-1)^2$.

After performing the second linear fractional transformation using
$\Lambda(K_1(x);\cdot)$ and $\Lambda^{-1}(K_1(x);\cdot)$,
we do obtain another elliptic integral with the quartic of the form of (\ref{eq:quarticNormalForm})
now with
\begin{equation}
	k^2\, =\, (K_2(x))^2\, ,\ \text{ where }\ K_2(x)\, =\,\mathcal{J}(K_1(x))\,
	=\, \left(\frac{1-(1-4x^2)^{1/4}}{1+(1-4x^2)^{1/4}}\right)^2\, .
\end{equation}
And the path of integration is the interval from $-1$ to $1$, as desired.
\begin{theorem}
\label{thm:rewrite}
As usual, denote the complete elliptic integral of the first kind as 
\begin{equation}
\label{eq:FirstKind}
	K(k)\, =\, \int_0^1 \frac{1}{\sqrt{(1-t^2)(1-k^2t^2)}}\, dt\, ,
\end{equation}
and let the complete elliptic integral of the third kind be
\begin{equation}
	\Pi(k;\lambda)\, =\, \int_0^1 \frac{1}{\sqrt{(1-t^2)(1-k^2 t^2)}}\cdot \frac{1}{1-\lambda t}\, dt\, .
\end{equation}
Then we have that $\mathcal{A}_2(x,w)$ can be written as a linear combination of terms of the form
$\phi_i(x,w) \Pi(\psi_i(x,w))$ (with at most 4 terms) where each $\phi_i$ and $\psi_i$ are algebraic
expressions.
Hence the generating function $\alpha(w,z)$ is an expression involving algebraic functions
and a linear combination of several $\Pi$-functions, viewed as a vector over the field of algebraic functions
(other than the identically zero function).
\end{theorem}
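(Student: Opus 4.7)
The strategy is to continue the elliptic-integral reduction begun at the end of Section~\ref{sec:Main}: starting from (\ref{eq:A2formula2}), I first split off a piece that will contribute a first-kind elliptic integral, then invoke the two Möbius transformations $\mathcal{L}$ and $\Lambda(K_1(x);\cdot)$ already set up in the paper to bring the rest into Legendre normal form, and finally apply partial fractions to identify the poles coming from $\mathcal{Q}_2$ with the characteristic parameters of $\Pi$-functions. The first step uses the algebraic identity $-\mathcal{Q}_1=-\mathcal{Q}_2-w^2 r^2$ to write
\[
	\mathcal{A}_2(x,w)\,=\,\frac{1}{\pi}\int_{c_1(x)}^{c_2(x)}\frac{dr}{\sqrt{-\mathcal{Q}_1(x;r)}}\,+\,\frac{w^2}{\pi}\int_{c_1(x)}^{c_2(x)}\frac{r^2\,dr}{-\mathcal{Q}_2(x,w;r)\,\sqrt{-\mathcal{Q}_1(x;r)}}\, ,
\]
so that the first piece depends only on $x$ and will contribute (after the two Möbius changes) an algebraic multiple of $K(K_2(x))$, while the second piece carries all the $\Pi$-dependence through the four simple poles of $r^2/(-\mathcal{Q}_2)$ at $r=a_1(x,w),a_2(x,w),b_1(x,w),b_2(x,w)$.

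Next I apply the composition $\Lambda(K_1(x);\cdot)\circ\mathcal{L}$. By construction it maps the four roots of $\mathcal{Q}_1$ to $\{\pm 1,\pm 1/K_2(x)\}$ and the path $[c_1(x),c_2(x)]$ to $[-1,1]$, with $-\mathcal{Q}_1$ becoming a constant multiple of $(1-t^2)(1-K_2(x)^2 t^2)$. For the second integral, the same substitutions together with the Jacobians transform $r^2\,dr/(-\mathcal{Q}_2(x,w;r))$ into a proper rational 1-form $\widetilde{\mathcal{R}}(x,w;t)\,dt$, whose denominator has four simple roots $t_1(x,w),\dots,t_4(x,w)$ equal to the images of $a_1,a_2,b_1,b_2$. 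Properness holds because $r^2/(-\mathcal{Q}_2)$ vanishes like $1/r^2$ at $r=\infty$, so the composed 1-form vanishes at $t=\infty$. Partial-fraction decomposition then gives $\widetilde{\mathcal{R}}(x,w;t)=\sum_{i=1}^{4}\phi_i(x,w)/(t-t_i(x,w))$ with the $\phi_i$ and $t_i$ algebraic in $(x,w)$, since Möbius substitutions, the quadratic formulas of Lemmas~\ref{lem:calQ1roots}--\ref{lem:calQ2roots}, and partial-fraction decomposition all preserve algebraicity.

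Each simple-pole term $\phi_i/(t-t_i)$, integrated against $1/\sqrt{(1-t^2)(1-K_2(x)^2 t^2)}$ over $[-1,1]$, is expressed in $\Pi$-form by splitting the interval at $0$, performing $t\mapsto -t$ on the left half, and applying the identity $1/(t-t_i)=-t_i^{-1}/(1-(1/t_i)\,t)$. Summing the contributions pole-by-pole produces the claimed representation of $\mathcal{A}_2(x,w)$ as an algebraic linear combination of at most four $\Pi$-function terms of the form $\phi_i(x,w)\,\Pi(\psi_i(x,w))$, and hence of $\alpha(w,z)=\mathcal{A}_1(x,w)+\mathcal{A}_2(x,w)$ as an algebraic linear combination of $\Pi$-functions. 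The main obstacle will be the bookkeeping: tracking the square-root signs through both Möbius changes (using the branch cuts fixed in Lemmas~\ref{lem:sqrtContinuationB}--\ref{lem:sqrtContinuationC}), confirming the orientation of the transformed path, and writing out the $\phi_i$ and $\psi_i$ explicitly. A useful structural input that simplifies the final count is the self-inversive symmetry $a_1 b_2=a_2 b_1=1$ of~(\ref{eq:Q2inversive}) together with $\mathcal{L}(1/z)=-\mathcal{L}(z)$: at the intermediate stage the rational factor is an \emph{even} function of $s=\mathcal{L}(r)$, so its four poles come in two additive-inverse pairs and its partial fractions combine naturally as $1/(s^2-s_i^2)$, which is what keeps the number of independent $\Pi$-terms down to four after $\Lambda(K_1(x);\cdot)$.
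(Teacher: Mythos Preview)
Your proposal is correct and follows essentially the same route as the paper: both rewrite $\sqrt{-\mathcal{Q}_1}/(-\mathcal{Q}_2)$ as $(\mathcal{Q}_1/\mathcal{Q}_2)\cdot 1/\sqrt{-\mathcal{Q}_1}$, use the identity $\mathcal{Q}_1=\mathcal{Q}_2+w^2 r^2$ to split off a constant term (your first-kind piece) plus four simple-pole contributions, then push everything through the two M\"obius changes $\mathcal{L}$ and $\Lambda(K_1(x);\cdot)$ to reach Legendre normal form with modulus $K_2(x)$, and finally identify each pole term with a $\Pi$-integral. The paper organizes this as a direct partial-fraction expansion of $\mathcal{Q}_1/\mathcal{Q}_2$ (its final lemma in Appendix~\ref{sec:Elliptic}) together with an explicit transformation rule for $1/(\Phi^{-1}(\xi)-\rho)$ under a M\"obius map, which is exactly what your pole-by-pole tracking amounts to; your observation about the self-inversive symmetry producing even rational functions in $s=\mathcal{L}(r)$ is a nice structural point not made explicit in the paper. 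One small slip: in your first display the second integral should carry a minus sign, since $-\mathcal{Q}_1=-\mathcal{Q}_2-w^2 r^2$ gives $(-\mathcal{Q}_1)/(-\mathcal{Q}_2)=1-w^2 r^2/(-\mathcal{Q}_2)$.
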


We note that occasionally leading theoretical physicists make contributions to algebra and number theory
by determining certain physical quantities in terms of algebraic or number theoretic expressions.
For example, Boos, Korepin, Nishiyama and Shiroishi found that certain correlation functions of the XXZ ground
state (for the quantum spin chain) are expressible in terms of rational linear combinations of the products of
$\ln(2)$ and factors of the Riemann zeta functions $\zeta$ evaluated at the first $n$ odd positive integers
\cite{KorepinEtAl}.
Our main theorem may be viewed as following their model.

The following is an {important remark.}
\begin{remark}
\label{rem:PinskySRW}
In \cite{Pinsky1}, Ross Pinsky proved the combinatorial fact
\begin{equation}
\label{eq:Pinsky2}
	\frac{A(N,j)}{\binom{2N}{N}\binom{2N}{N}}\, =\, 
	\sum_{\substack{t_1,\dots,t_j \in \Z\\0\leq t_1\leq\dots\leq t_j\leq 2N}} 
\mathbf{P}\big( U_{t_1}=\dots=U_{t_j}=0\, \big|\, U_{2N}=V_{2N}=0\big)\, ,
\end{equation}
where $(U_t,V_t)$ performs the standard $d=2$ dimensional SRW starting from $(0,0)$.
See his equation 4.2 in the first part of his proof of his Lemma 2.
\end{remark}

The standard $d=2$ dimensional SRW is such that $(U_{t+1}-U_t,V_{t+1}-V_t)$ are iid increments equally likely
to be any of the four elements from $\{(1,0),(0,1),(-1,0),(0,-1)\}$.
The transformation that Pinsky gives in his paper is $(X_t,Y_t) = (U_t+V_t,U_t-V_t)$,
so that $X_t$ and $Y_t$ independently perform $d=1$ dimensional SRWs.
Then it is seen that $\mathbf{P}(U_{2N}=V_{2N}=0)=2^{-4N} \binom{2N}{N}\binom{2N}{N}$. Therefore, Pinsky's formula (\ref{eq:Pinsky2}) can be rewritten as
\begin{equation}
\label{eq:PrePolya}
	2^{-4N} A(N,j)\, =\, \sum_{\substack{t_1,\dots,t_j \in \Z\\0<t_1<\dots<t_j<2N}} 
\mathbf{P}\big( U_{t_1}=\dots=U_{t_j}=0\ \text{ and }\ U_{2N}=V_{2N}=0\big)\, ,
\end{equation}
This is also related to P\'olya's $d=2$ dimensional result.
We point to McKean's textbook, in which he discusses
P\'olya's result in Section 4.3, \cite{McKean}. 
That is more accessible than P\'olya's original \cite{Polya}.
The precise formula of P\'olya is the following
\begin{equation}
	\sum_{N=0}^{\infty} \mathbf{P}(U_{2N}=V_{2N}=0) z^N\, =\, \frac{1}{\pi}\, K(z)\, ,
\end{equation}
for $|z|<1$, where $K(z)$ is as in (\ref{eq:FirstKind}).
Let us perform a consistency check for our main formulas by 
noting that when $w=0$ we have $\mathcal{A}_1(x,w)=\mathcal{A}_1(x,0)=0$ due to formula 
(\ref{eq:A2formula2}).
But in Lemma \ref{lem:calA2penultimate}, if we send $w \to 0$, we have 
\begin{equation}
-\mathcal{Q}_2(x,w;r)\, =\, 
-\mathcal{Q}_2(x,0;r)\, =\, -\mathcal{Q}_1(x;r)\, .
\end{equation}
But then, by Lemma \ref{lem:residue1}, we have
\begin{equation}
	\alpha(0,x^2)\, =\, 0 + \mathcal{A}_2(x,0)\, =\, \frac{1}{ \pi x}\, \int_{c_1(x)}^{c_2(x)} 
\frac{1}{\sqrt{\big(r-c_1(x)\big)\big(c_2(x)-r\big)\big(\frac{1}{c_1(x)}-r\big)\big(\frac{1}{c_2(x)}-r\big)}}\, dr\, .
\end{equation}
Using the methods of Appendix \ref{sec:Elliptic}, this can be seen to give the desired specialization $\alpha(x,w=0)=K(x)/\pi$.

\section{Summary and outlook}

We have presented some explicit formula for the generating function of the combinatorial array $A(N,j)$.
This arrray was introduced by Ross Pinsky in his beautiful combinatorial formula for the 2nd moment
of $Z_{n,k}$ from the generalized Ulam problem.
Pinsky then obtained rigorous upper and lower bounds, sufficient for his purpose of determining when the sequence $Z_{n,{k_n}}$
satisfies a weak law of large numbers or does not satisfy such, depending on the behavior of the sequence $k_n$
as $n \to \infty$.
But for  reasons discussed in Appendix \ref{sec:SK} and in the introduction, we believe
that more precise asymptotics are desirable.

We presented several formulas for the generating function, but not for the precise asymptotics. Using the Chebyshev
inequality applied to 
\begin{equation}
\alpha(w,x^2)\, =\, \sum_{j,N=0}^{\infty} w^j x^{2N} A(N,j)\, ,
\end{equation}
since all the combinatorial terms $A(N,j)$ are nonnegative, we may obtain an upper bound
\begin{equation}
	A(N,j)\, \leq\, \min_{w>0} \min_{z>0} \frac{\alpha(w,x^2)}{w^j x^{2N}}\, =\,
\min_{w>0} \min_{z>0} \frac{\mathcal{A}_1(x,w)+\mathcal{A}_2(x,w)}{w^j x^{2N}}\, .
\end{equation}
We have not yet carried out this optimization problem.
We mention that the collaboration resulting in the present paper took place after one of the authors
worked on his own (with mentorship) for  precisely one summer, and then both coauthors collaborated
on the present paper after that work was carried out.
This paper represents the end product of a certain collaboration. 
Even if further results seem like they should naturally be contained herein, we have limited this paper to 
the products of that research experience.

So the above calculation represents our primary interest in future research in this area.
As we said before, a next natural question is to calculate $\mathbf{E}[Z_{n,k} Z_{n,\ell}]$ for $k$ and $\ell$
being allowed to differ.
We expect that is an important first step towards calculating all higher moments by iterating Pinsky's
procedure for the 2nd moment.
But as with the problem above, we have not carried that out yet.
Either we will return to this problem, possibly in collaboration with other researchers.
Or else perhaps some other researchers will resolve this, indepenent from ourselves.

\label{sec:Outlook}

\section*{Acknowledgments}

S.S.~is grateful to Dongsheng Wu for a  helpful conversation. The work of S.S.~was partly supported by 
a Simons collaboration grant.

\appendix 

\vspace{1cm}

\noindent
{\LARGE \bf Appendices}

\small

\section{Bonferroni inequalities and Bell polynomials}
\label{sec:Bon}

The principle of inclusion-exclusion (PIE) and the associated inequalities related to the PIE
are a well-known topic at the intersection of combinatorics and probability theory.
We will follow the exposition from the textbook of Charalambides \cite{Charalambides}
which we appreciate, greatly. The descriptions there are clear, and also sufficiently complete
to be useful to newcomers. For us, it is nice that he discusses the Bonferroni inequalities
and the Bell polynomials in some detail.

Recall that we are interested in the number of cardinality $k$ subsequences of $\{1,\dots,n\}$
which are increasing for a uniform random permutation $\pi \in S_n$. Let us denote by $C_{n,k}$
the collection of $k$-tuples $\boldsymbol{i} = (i_1,\dots,i_k) \in \Z^k$ such that
\begin{equation}
	1\, \leq\, i_1\, <\, i_2\, <\, \dots\, <\, i_k\, \leq\, n\, .
\end{equation}
So, for instance, the cardinality of this set is 
\begin{equation}
	|C_{n,k}|\, =\, \binom{n}{k}\, .
\end{equation}
Given a permutation $\pi \in S_n$ and $(i_1,\dots,i_k) \in C_{n,k}$, let us denote the 
indicator for $(i_1,\dots,i_k)$ being an increasing subsequence of $\pi$:
\begin{equation}
\label{eq:calPdef}
	\mathcal{P}_{n,k}(\pi;(i_1,\dots,i_k))\, =\, 
	\prod_{a=1}^{k-1} \mathbf{1}_{(0,\infty)}(\pi_{i_{a+1}} - \pi_{i_a})\, .
\end{equation}
So, for example, we know that
\begin{equation}
	\forall (i_1,\dots,i_k) \in C_{n,k}\, ,\ \text{ we have }\ 
	\sum_{\pi \in S_n} 	\mathcal{P}_{n,k}(\pi;(i_1,\dots,i_k))\, =\, 
	\frac{n!}{k!}\, ,
\end{equation}
by the discussion around equations (\ref{eq:jIncreasing}), (\ref{eq:jpiSubset})
and (\ref{eq:jpiOrdered}).
Now let $\widetilde{C}^{(r)}_{n,k}$ denote the collection of all sets 
\begin{equation}
	\left\{\boldsymbol{i}^{(1)},\dots,\boldsymbol{i}^{(r)}\right\}\, ,
\end{equation}
satisfying
\begin{itemize}
\item[$\bullet$] for each $s \in \{1,\dots,r\}$ we have $\boldsymbol{i}^{(s)} = (i^{(s)}_1,\dots,i^{(s)}_k) \in C_{n,k}$,
\item[$\bullet$] for each $s,t \in \{1,\dots,r\}$, if $s\neq t$ then there is some $a \in \{1,\dots,k\}$
such that $i^{(s)}_a \neq i^{(t)}_a$.
\end{itemize}
Therefore, we have, for instance, that the cardinality is equal to
\begin{equation}
	\left|\widetilde{C}^{(r)}_{n,k}\right|\, =\, \binom{\binom{n}{k}}{r}\, 
	=\, \frac{N(N-1)\cdots (N-r+1)}{r!} \Bigg|_{N=\binom{n}{k}}\, .
\end{equation}
Now, following the notation from Chapter 4 of Charalambides, let us define
\begin{equation}
\label{eq:calS}
	\mathcal{S}_{n,k}^{(r)}\, =\, \sum_{\pi \in S_n}\, \sum_{\{\boldsymbol{i}^{(1)},\dots,\boldsymbol{i}^{(r)}\} \in
	\widetilde{C}^{(r)}_{n,k}}\,  \prod_{s=1}^{r} \mathcal{P}_{n,k}(\pi;\boldsymbol{i}^{(s)})\, .
\end{equation}
This is equal to the sum over all 
$(\boldsymbol{i}^{(1)},\dots,\boldsymbol{i}^{(r)}) \in \widetilde{C}^{(r)}_{n,k}$
of the number of permutations in $S_n$ such that each of $\boldsymbol{i}^{(1)}$, \dots,
$\boldsymbol{i}^{(r)}$ is an increasing subsequence for $\pi$.
So, for example, 
\begin{equation}
	\frac{1}{n!}\, \mathcal{S}_{n,k}^{(1)}\, =\, \binom{n}{k} \cdot \frac{1}{k!}\, ,
\end{equation}
as in equation (\ref{eq:Zmean}). Note that we are following the notation of the {\em combinatorics}
textbook of Charalambides, so that we focus on cardinalities, instead of expectations and probabilities.
(That is what we needed to noramlize by $1/n!$ on the left-hand-side of the equation above.)
Now, following the notation from Charalambides, let us define
\begin{equation}
	\mathcal{L}^{(r)}_{n,k}\, =\, \left|\left\{ \pi \in S_n\, :\, 
	\exists \{\boldsymbol{i}^{(1)},\dots,\boldsymbol{i}^{(r)}\} \in \widetilde{C}^{(r)}_{n,k}\, ,\ 
	\text{ s.t. } \ \prod_{s=1}^{r} \mathcal{P}_{n,k}(\pi,\boldsymbol{i}^{(s)})=1
\right\}\right|\, .
\end{equation}
This is equal to the number of permutations $\pi \in S_n$ such that there are at least $r$
distinct subsequences $\boldsymbol{i}^{(1)},\dots,\boldsymbol{i}^{(r)} \in C_{n,k}$,
each of cardinality $k$, such that all of the $r$ subsequences are increasing for $\pi$.
Then note that we have, using the definition of $Z_{n,k}(\pi)$ from (\ref{eq:defZnk}),
\begin{equation}
	\frac{1}{n!}\, \mathcal{L}^{(r)}_{n,k}\,
	=\, \mathbf{P}(\{\pi \in S_n\, :\, Z_{n,k}(\pi) \geq r\})\, .
\end{equation}
In principle, we are most interested in $\mathcal{L}^{(1)}_{n,k}$ because this is the cardinality which 
arises in the formula for the length of the longest increasing subsequence:
\begin{equation}
\label{eq:ApplicationL1}
	\frac{1}{n!}\, \mathcal{L}^{(1)}_{n,k}\, 
	=\,  \mathbf{P}(\{\pi \in S_n\, :\, Z_{n,k}(\pi) \geq 1\})\,
	=\, \mathbb{P}(\{\pi \in S_n\, :\, L_n(\pi) \geq k\})\, .
\end{equation}
But Charalambides states the Bonferroni inequalities for general $r$. Therefore, we defined
the terms for general $r$, in order to most closely match his notation. (Note that that variable which he calls $k$ is what
we call $r$, and there are other minor changes {\em mutatis mutandis}.)
\begin{proposition}[Version of the Bonferroni inequalities]
Suppose we have $n \in \N$ and we have chosen $k \in \{1,\dots,n\}$.
Then, for each $r \in \{1,\dots,\binom{n}{k}\}$ and for each
$R \in \{0,1,\dots,|\widetilde{C}^{(r)}_{n,k}|\}$ 
we have
\begin{equation}
	(-1)^{R-r+1} \left(\mathcal{L}_{n,k}^{(r)} - \sum_{s=r}^R (-1)^{s-r} \binom{s-1}{r-1} \mathcal{S}_{n,k}^{(s)} \right)\, \geq\, 0\, .
\end{equation}
\end{proposition}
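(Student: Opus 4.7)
The plan is to reduce the claimed inequality to a pointwise statement for each permutation, and then handle the resulting alternating-sum identity via a Taylor-remainder argument. For fixed $\pi \in S_n$, the inner double sum in the definition (\ref{eq:calS}) counts $s$-element subsets of the set of increasing $k$-subsequences of $\pi$, a set of cardinality $Z_{n,k}(\pi)$. Hence
\begin{equation*}
\mathcal{S}^{(s)}_{n,k}\, =\, \sum_{\pi \in S_n}\binom{Z_{n,k}(\pi)}{s}\, ,\qquad \mathcal{L}^{(r)}_{n,k}\, =\, \sum_{\pi \in S_n}\mathbf{1}_{\{Z_{n,k}(\pi)\geq r\}}\, .
\end{equation*}
Consequently it suffices to prove the pointwise inequality
\begin{equation*}
(-1)^{R-r+1}\Bigg(\mathbf{1}_{\{Z\geq r\}}\, -\, \sum_{s=r}^{R} (-1)^{s-r}\binom{s-1}{r-1}\binom{Z}{s}\Bigg)\, \geq\, 0
\end{equation*}
for every nonnegative integer $Z$, and then to sum over $\pi \in S_n$.

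As a preliminary step, I would establish the exact Waring-type identity
\begin{equation*}
\mathbf{1}_{\{Z\geq r\}}\, =\, \sum_{s=r}^{Z} (-1)^{s-r}\binom{s-1}{r-1}\binom{Z}{s}\qquad \text{for } Z\geq 0\, ,
\end{equation*}
which follows by induction on $Z$ using Pascal's identity $\binom{Z}{s} = \binom{Z-1}{s-1}+\binom{Z-1}{s}$, or by generating-function bookkeeping applied to $(1-x)^Z$. Subtracting the truncated partial sum from this identity reduces the pointwise inequality to the claim that the tail
\begin{equation*}
\mathcal{T}(Z)\, :=\, \sum_{s=R+1}^{Z}(-1)^{s-r}\binom{s-1}{r-1}\binom{Z}{s}
\end{equation*}
has sign $(-1)^{R-r+1}$ (with $\mathcal{T}(Z) = 0$ when $R \geq Z$). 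The edge cases $Z<r$ and $R<r$ are trivial by inspection of the empty sums.

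The main obstacle is the sign analysis of $\mathcal{T}(Z)$, since the summand magnitudes are not monotone and a naive alternating-series estimate fails. I propose to handle this via an integral representation. Using the identity $\binom{s-1}{r-1}\binom{Z}{s} = \tfrac{r}{s}\binom{Z}{r}\binom{Z-r}{s-r}$ together with $\tfrac{r}{s} = r\int_0^1 x^{s-1}\, dx$, one rewrites
\begin{equation*}
\mathcal{T}(Z)\, =\, r\binom{Z}{r}\int_0^1 x^{r-1}\Bigg[(1-x)^{Z-r}\, -\, \sum_{u=0}^{R-r}(-1)^u\binom{Z-r}{u}x^u\Bigg]\, dx\, .
\end{equation*}
The bracketed expression is precisely the Taylor remainder of $(1-x)^{Z-r}$ expanded about $x=0$ to order $R-r$; by the integral form of the remainder it equals
\begin{equation*}
(-1)^{R-r+1}(R-r+1)\binom{Z-r}{R-r+1}\int_0^x (x-t)^{R-r}(1-t)^{Z-R-1}\, dt\, ,
\end{equation*}
which is manifestly $(-1)^{R-r+1}$ times a nonnegative quantity on $[0,1]$ whenever $Z>R$. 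Substituting back yields $(-1)^{R-r+1}\mathcal{T}(Z)\geq 0$, and summing the pointwise inequality over $\pi \in S_n$ completes the proof of the proposition.
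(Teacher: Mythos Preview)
The paper does not actually prove this proposition: immediately after stating it, the authors write ``We will not provide the proof of this proposition, since we have copied it from \cite{Charalambides} where it is Theorem 4.5, equation (4.23).'' So there is no in-paper proof to compare against; you are supplying a proof where the paper simply cites a textbook.

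Your argument is correct and self-contained. The reduction to a pointwise inequality in $Z = Z_{n,k}(\pi)$ is exactly the right move (and the paper itself notes, a few lines below the proposition, that $\mathcal{S}^{(s)}_{n,k} = \sum_\pi \binom{Z_{n,k}(\pi)}{s}$). The binomial identity $\binom{s-1}{r-1}\binom{Z}{s} = \tfrac{r}{s}\binom{Z}{r}\binom{Z-r}{s-r}$ checks, and the integral representation via $\tfrac{1}{s}=\int_0^1 x^{s-1}\,dx$ correctly rewrites the tail as $r\binom{Z}{r}\int_0^1 x^{r-1}$ times the Taylor remainder of $(1-x)^{Z-r}$ at order $R-r$. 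The Lagrange integral form of that remainder has the stated sign on $[0,1]$ whenever $Z>R\geq r$, which is what you need. This is a clean and arguably more transparent route than the standard inclusion--exclusion bookkeeping one finds in Charalambides, since the sign control is reduced to a single positivity statement about a Taylor remainder rather than a combinatorial induction.

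One small caveat: your remark that ``the edge cases $Z<r$ and $R<r$ are trivial by inspection of the empty sums'' is fine for $Z<r$ and for $R=r-1$, but for $R\le r-2$ the inequality as literally stated in the proposition would read $(-1)^{R-r+1}\mathcal{L}^{(r)}_{n,k}\ge 0$ with a potentially negative sign, which need not hold. This is an imprecision in the paper's stated range $R\in\{0,1,\dots\}$ rather than a flaw in your proof; the standard Bonferroni range is $R\ge r-1$ (and the paper's own specialization to $r=1$ takes $R\ge 1$). You might flag this explicitly rather than sweep it under ``trivial.''
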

We will not provide the proof of this proposition, since we have copied it from \cite{Charalambides}
where it is Theorem 4.5, equation (4.23).
Our main interest is in the case of $r=1$. So, summarizing that case, the proposition implies
that for each $R \in \{1,\dots,\binom{n}{k}\}$, we have:
\begin{itemize}
\item[$\bullet$] if $R$ is even then
\begin{equation}
	\mathcal{L}_{n,k}^{(1)}\, \geq\, \sum_{s=1}^{R} (-1)^{s-1} \mathcal{S}_{n,k}^{(s)}\, ;
\end{equation}
\item[$\bullet$] whereas, if $R$ is odd then
\begin{equation}
	\mathcal{L}_{n,k}^{(1)}\, \leq\, \sum_{s=1}^{R} (-1)^{s-1} \mathcal{S}_{n,k}^{(s)}\, .
\end{equation}
\end{itemize}
Of course, PIE also gives (as in Theorem 4.2 of Charalambides for example)
\begin{equation}
	\mathcal{L}_{n,k}^{(1)}\, =\, \sum_{s=1}^{|C_{n,k}|} (-1)^{s-1} \mathcal{S}_{n,k}^{(s)}\, .
\end{equation}
So, if one could calculate $\mathcal{S}_{n,k}^{(s)}$ for all $s$ up to $|C_{n,k}|=\binom{n}{k}$,
then this gives a method of calculating $\mathcal{L}_{n,k}^{(1)}$ from them, in principle.
But the goal of the Bonferroni inequalities is to hopefully obtain
inequalities that are useful without requiring all $\mathcal{S}_{n,k}^{(s)}$ for $s$ going up
to $\binom{n}{k}$, but merely those for $s \in \{1,\dots,R\}$.
It is hoped that for some $R\leq \binom{n}{k}$ this gives 
an easier method of bracketing $\mathcal{L}_{n,k}^{(1)}$ in order to calculate
$\mathbb{P}(L_n \geq k)$.

The next step is to simplify the calculation of the quantities $\mathcal{S}_{n,k}^{(r)}$.
What Pinsky's method allow us to calculate is moments of $Z_{n,k}$.
One may note that $Z_{n,k}(\pi)$ is the total number of cardinality $k$ subsequences, $\boldsymbol{i} \in C_{n,k}$
such that $\boldsymbol{i}$ is increasing for $\pi$.
Then it is easy to see that 
\begin{equation}
	 \sum_{\{\boldsymbol{i}^{(1)},\dots,\boldsymbol{i}^{(r)}\} \in
	\widetilde{C}^{(r)}_{n,k}}\,  \prod_{s=1}^{r} \mathcal{P}_{n,k}(\pi;\boldsymbol{i}^{(s)})\, 
	=\, \binom{Z_{n,k}(\pi)}{r}\, =\, \frac{1}{r!}\, Z_{n,k}(\pi) \big(Z_{n,k}(\pi)-1\big) \cdots \big(Z_{n,k}-r+1\big)\, .
\end{equation}
In other words, combining the earlier calculations, we have for $R \in \{1,\dots,\binom{n}{k}\}$:
\begin{itemize}
	\item if $R$ is even, then 
\begin{equation}
	\mathbf{P}(L_n \geq k)\, =\, \mathbf{P}(Z_{n,k}\geq 1)\, \geq\, \sum_{s=1}^{R} (-1)^{s-1} \mathbf{E}\left[\binom{Z_{n,k}}{s}\right]\, ;
\end{equation}
	\item while if $R$ is odd, then
\begin{equation}
	\mathbf{P}(L_n \geq k)\, =\, \mathbf{P}(Z_{n,k}\geq 1)\, \leq\, \sum_{s=1}^{R} (-1)^{s-1} \mathbf{E}\left[\binom{Z_{n,k}}{s}\right]\, .
\end{equation}
\end{itemize}
But more generally, for each $r \in \{1,\dots,\binom{n}{k}\}$ and for each
$R \in \{0,1,\dots,|\widetilde{C}^{(r)}_{n,k}|\}$ 
we have
\begin{equation}
	(-1)^{R-r+1} \left(\mathbf{P}(Z_{n,k}\geq r) - \sum_{s=r}^R (-1)^{s-r} \binom{s-1}{r-1}  \mathbf{E}\left[\binom{Z_{n,k}}{s}\right] \right)\, \geq\, 0\, .
\end{equation}
\begin{remark}
In an earlier version of this article, we promoted the idea of the Bell polynomials, not realizing the simple connection to the factorial moments.
\end{remark}
Precisely, the important fact is that, defining exponential Bell partition polynomials as $B_0\equiv 0$ and for $r \in \N = \{1,2,\dots\}$
\begin{equation}	
\label{eq:BellDef}
	B_r(x_1,\dots,x_r)\, =\, \sum_{s=0}^{r-1} \binom{r+1}{s} x_{s+1} B_{r-1-s}(x_1,\dots,x_{r-1-s})\, ,
\end{equation}
we have that the following 1-variable specialization gives the identity 
\begin{equation}
	\forall r \in \{0,1,\dots\}\, ,\ \text{ we have }\ B_r(-0!z,-1!z,\dots,-(r-1)!z)\, =\, \frac{1}{r!}\, (z)_{r}\, ,
\end{equation}
where $(z)_r$ is the Pochhammer symbol, falling factorial from (\ref{eq:PochhammerFallingFactorial}).
A reference for the Bell polynomials is Charalambides, Theorem 11.2 \cite{Charalambides}.

Even though the necessity is now bypassed, let us describe our old observations that we largely 
transcribed from Charalambides.
The reason that the Bell polynomials are relevant is that according to Newton's identity
\begin{equation*}
	\text{\rm\url{https://en.wikipedia.org/wiki/Newton's_identities}}
\end{equation*}
we have the following relation between symmetric functions
\begin{equation}
	r\cdot e_r(x_1,\dots,x_N)\, =\, \sum_{s=1}^{r} (-1)^{s-1} e_{r-s}(x_1,\dots,x_N) p_s(x_1,\dots,x_N)\, ,
\end{equation}
where the power sum symmetric functions are
\begin{equation}
	p_r(x_1,\dots,x_N)\, =\, \sum_{\kappa=1}^{N} x_{\kappa}^r\, ,
\end{equation}
and the elementary symmetric functions are 
\begin{equation}
	e_r(x_1,\dots,x_N)\, =\, \sum_{i_1,\dots,i_r \in \{1,\dots,N\}} 
	\left(\prod_{s=1}^{r-1} \mathbf{1}_{(0,\infty)}(i_{s+1}-i_s)\right) x_{i_1} x_{i_2} \cdots x_{i_r}\, .
\end{equation}
Relatedly, there are identities for certain types of generating functions of these symmetric functions:
\begin{equation}
\label{eq:NewtonGenFun}
	1+\sum_{r=1}^{N} (-t)^k e_r(x_1,\dots,x_N)\, =\, \exp\left(-\sum_{r=1}^{\infty} \frac{t^r}{r} p_r(x_1,\dots,x_N)\right)\, .
\end{equation}
This is relevant because according to (\ref{eq:calS})
\begin{equation}
\label{eq:calS2}
	\mathcal{S}_{n,k}^{(r)}\, =\, \sum_{\pi\in S_n} e_r(\mathfrak{X}_1(\pi),\dots,\mathfrak{X}_N(\pi))\, ,
\end{equation}
where  we enumerate the elements of $C_{n,k}$
as
\begin{equation}
	C_{n,k}\, =\, \left\{\boldsymbol{i}^{(1)},\dots,\boldsymbol{i}^{(N)}\right\}\, ,
\end{equation}
and, for each $\kappa \in \{1,\dots,N\}$, we define
\begin{equation}
	\mathfrak{X}_{\kappa} : S_n \to \{0,1\}\, ,\ \text{ defined by the formula }\
	\mathfrak{X}_{\kappa}(\pi)\, =\, \mathcal{P}_{n,k}\left(\pi;\boldsymbol{i}^{(\kappa)}\right)\, .
\end{equation}
where $\mathcal{P}_{n,k}(\cdot;\cdot)$ is as in (\ref{eq:calPdef}).
But then, since each $\mathfrak{X}_\kappa$ is an indicator function, taking values in 
$\{0,1\}$,
\begin{equation}
\label{eq:ZPower}
	\forall r \in \N=\{1,2,\dots\}\, ,\ \text{ we have }\ 
	p_r(\mathfrak{X}_1,\dots,\mathfrak{X}_N)\, =\, Z_{n,k}\, =\, \sum_{s=1}^{N} \mathfrak{X}_{\kappa}\, .
\end{equation}
If we did not have this simple formula, then a formula for each $r \in \N$ would be useful more generally:
\begin{equation}
\label{eq:Wikipedia}
	e_r(x_1,\dots,x_N)\, =\, \frac{(-1)^r}{r!}\, B_r(-0!p_1(x_1,\dots,x_N),\dots,-(r-1)!p_r(x_1,\dots,x_r))\, ,
\end{equation}
which essentially restates (\ref{eq:NewtonGenFun}) via (\ref{eq:BellDef}) (which is equivalent to 
the generating function identity stating that
$\sum_{r=1}^{\infty} B_n(x_1,\dots,x_n) t^n/n!$ is 
equal to $\exp\left(\sum_{r=1}^{\infty} x_r t^r/r!\right)$ as in Charalambides Corollary 11.1).
See, for example, Example 11.5 on page 427 in \cite{Charalambides}.

\section{Review of the physicists' replica approach to the SK model}
\label{sec:SK}

Suppose that for each pair $i,j \in \N = \{1,2,\dots\}$ there is a standard normal random variable $\mathsf{J}_{i,j}$,
such that all the $\mathsf{J}_{i,j}$'s are independent.
Then, for each $N \in \N$, the $N$-spin Sherrington-Kirkpatrick Hamiltonian is a random function
\begin{equation}
\mathsf{H}_N : \{+1,-1\}^N \to \R\, ,\ \text{ by the formula }\
\mathsf{H}_N(\boldsymbol{\sigma})\, =\, -\sum_{i=1}^{N} \sum_{j=1}^{N} \frac{\mathsf{J}_{i,j}}{\sqrt{2N}}\, \sigma_i \sigma_j\, ,
\end{equation}
for each $\boldsymbol{\sigma} = (\sigma_1,\dots,\sigma_N) \in \{+1,-1\}^N$.
Originally, the simplest question that phyicists sought to answer was the formula for the limiting free energy.

The finite-size approximation to the thermodynamic free energy is $f_N : (0,\infty) \to \R$, given by the formula
\begin{equation}
\mathsf{f}_N(\beta)\, =\, -\frac{1}{N \beta}\, \ln\left(\mathsf{Z}_N(\beta)\right)\, ,\ \text{ where }\ \mathsf{Z}_N(\beta)\, =\, \sum_{\sigma_1,\dots,\sigma_N \in \{+1,-1\}}
\exp\big(-\beta \mathsf{H}_N(\boldsymbol{\sigma})\big)\, .
\end{equation}
The original interest was in the limit of the quenched free energy
\begin{equation}
f^Q(\beta)\, =\, \lim_{N \to \infty} \mathbf{E}[\mathsf{f}_N(\beta)]\, ,
\end{equation}
as well as a proof argument for the fact that $\mathsf{f}_N(\beta)$ satisfies a weak law of large numbers in the $N \to \infty$ limit.

One approach to trying to guess the formula for $f^Q(\beta)$ is to actually calculate positive integer moments of the 
random function $\mathsf{Z}_N(\beta)$, which is called the random partition function for $N$ spins.
An excellent reference for mathematicians interested in one perspective on the physicists' ``replica trick,'' is the 
old review article of van Hemmen and Palmer \cite{vanHemmenPalmer}.
Following their notation, let us define, for each $n \in \N$, the function $\varphi_{N}^{(n)}:(0,\infty) \to \R$ given by the formula
\begin{equation}
\label{eq:momentsPF}
\varphi_{N}^{(n)}(\beta)\, =\, \frac{1}{N}\, \ln\left(\mathbf{E}\left[\big(\mathsf{Z}_N(\beta)\big)^n\right]\right)\, .
\end{equation}
Note that one could extend the definition to $\varphi_N^{(\nu)}(\beta)$ for all $\nu \in (0,\infty)$. Then one would have
$\mathbf{E}[-\beta \mathsf{f}_N(\beta)]$ is equal to $\lim_{\nu \to 0^+} (\varphi_N^{(\nu)}(\beta)-1)/\nu$, by properties such as convexity,
which insures existence of 1-sided derivatives.

The real ``trick'' in the physicists' method is that they extrapolate from integer $n>1$ to the limit $\nu \to 0^+$, in a non-trivial
way. Because the $\mathsf{J}_{i,j}$'s are all IID standard, normal random variables, we see that the $\mathsf{H}_N(\boldsymbol{\sigma})$'s 
(for varying $\boldsymbol{\sigma}$'s)
are jointly distributed (centered) Gaussian normal random variables, with covariance
\begin{equation}
\mathbf{E}\left[\mathsf{H}_N(\boldsymbol{\sigma})\mathsf{H}_N(\boldsymbol{\tau})\right]\, =\, \frac{1}{2N}\, \sum_{i=1}^{N} \sum_{j=1}^{N}
\sigma_i \sigma_j \tau_i \tau_j\, =\, \frac{N}{2}\, \big(q_N(\boldsymbol{\sigma},\boldsymbol{\tau})\big)^2\, ,\
\text{ where }\
q_N(\boldsymbol{\sigma},\boldsymbol{\tau})\, =\, \frac{1}{N}\, \sum_{i=1}^{N} \sigma_i \tau_i\, . 
\end{equation}
Using this, and the fact that
\begin{equation}
\big(\mathsf{Z}_N(\beta)\big)^n\, =\, \sum_{\boldsymbol{\sigma}^{(1)},\dots,\boldsymbol{\sigma}^{(n)} \in \{+1,-1\}^N} 
\exp\left(-\beta \sum_{r=1}^{n} \mathsf{H}_N\big(\boldsymbol{\sigma}^{(r)}\big)\right)\, ,
\end{equation}
the usual formula for the moment generating function of a Gaussian random variable gives
\begin{equation}
\label{eq:LargeDeviation}
\mathbf{E}\left[\big(\mathsf{Z}_N(\beta)\big)^n\right]\,
=\, \sum_{\boldsymbol{\sigma}^{(1)},\dots,\boldsymbol{\sigma}^{(n)} \in \{+1,-1\}^N}  \exp\left(\frac{\beta^2 N}{4}\, \sum_{r=1}^{n} \sum_{s=1}^{n} \left(q_N\big(\boldsymbol{\sigma}^{(r)},\boldsymbol{\sigma}^{(s)}\big)\right)^2\right)
\end{equation}
Therefore, the $\varphi_N^{(n)}(\beta)$ functions have thermodynamic limits
\begin{equation}
\varphi^{(n)}(\beta)\, =\, \lim_{N \to \infty} \varphi^{(n)}_N(\beta)\, ,
\end{equation}
which are calculated by large-deviation theory techniques. Indeed van Hemmen and Palmer 
defined a matrix-indexed Hamiltonian, for any $Q = (Q_{r,s})_{r,s=1}^{n} \in [-1,1]^{n\times n}$,
\begin{equation}
\mathcal{H}_Q : \{+1,-1\}^n \to \R\, ,\ \text{ by }\ \mathcal{H}_Q(\vec{S})\, =\, -\frac{1}{2}\, \sum_{r=1}^{N} \sum_{s=1}^{N}
\Big(Q_{r,s} S_r S_s - \frac{1}{2} Q_{r,s}^2\Big)\, ,
\end{equation}
for $\vec{S} = (S_1,\dots,S_n) \in \{+1,-1\}^n$. Then they argued that the formula for $-\varphi^{(n)}(\beta)/\beta$ is equal to 
the minimum of 
\begin{equation}
\label{eq:PFmin}
\Phi^{(n)}(Q)\, \stackrel{\mathrm{def}}{:=}\,
-\frac{1}{\beta} \ln\left( \sum_{S_1,\dots,S_n \in \{+1,-1\}} \exp\left(-\beta^2 \mathcal{H}_Q(\vec{S})\right)\right)\, ,
\end{equation}
varying over all matrices $Q \in [-1,1]^{n\times n}$.
The matrix $Q$ is akin to a Lagrange multiplier for the optimization problem associated to the large deviation type problem in (\ref{eq:LargeDeviation}).

There are various ways of arriving at this framework, some of which do not seem equivalent until extra work is done.
We have presented van Hemmen and Palmer's particular approach, which is somewhat different than the original
approach of Sherrington and Kirkpatrick \cite{SherringtonKirkpatrick}.
In \cite{vanHemmenPalmer}, they comment on the relation between the two works.
In their paper, they show that the arg-min is of the form
\begin{equation}
Q\, =\, \begin{bmatrix} 1 & q_0 & q_0 & \dots & q_0  \\ q_0 & 1 & q_0 & \dots & q_0  \\ q_0 & q_0  & 1 & \dots & q_0 \\
\vdots & \vdots & \vdots & \ddots & \vdots \\
q_0 & q_0 & q_0 & \dots & 1 \end{bmatrix}\, ,
\end{equation}
for some number $q_0$ which could then be calculated in terms of a self-consistency formula or in terms of certain Gaussian integrals (against
hyperbolic trigonometric functions).

A good reference for the physicists' perspective on replica symmetry breaking is the monograph of Mezard, Parisi and Virasoro \cite{MPV}.
In Chapter 3 they consider the analogous problem to the presentation above, but with an eye towards ``extrapolating'' from $n \in \N$ to $\nu \in (0,1)$.
This means guessing, but they must make a good educated guess to get something that is not senseless. Even Sherrington and Kirkpatrick noted in their
original paper the unphysical property that the entropy becomes negative below the critical temperature, using their approach.
Instead Parisi considered a sequence of ultrametric generalizations of the $Q$-matrix starting with the structure
\begin{equation}
	Q\, =\, \left[ \begin{array}{c|c}
\begin{matrix} 1 & q_1 & \dots & q_1 \\ q_1 & 1 & \dots & q_1 \\ \vdots & \vdots & \ddots & \vdots \\ q_1 & q_1 & \dots & 1\end{matrix} 
&
\begin{matrix} q_0 \end{matrix}\\
\hline
\begin{matrix} q_0 \end{matrix}
&
\begin{matrix} 1 & q_1 & \dots & q_1 \\ q_1 & 1 & \dots & q_1 \\ \vdots & \vdots & \ddots & \vdots \\ q_1 & q_1 & \dots & 1\end{matrix} 
\end{array}
\right]\, ,
\end{equation}
for a pair of numbers $0\leq q_0<q_1\leq 1$. 
In the matrix, the off-diagonal blocks, with just a $q_0$ are meant to indicate blocks of the matrix wherein every matrix entry
is equal to $q_0$. Similar to this, in the diagonal blocks, every matrix is equal to $q_1$ except for the diagonal  matrix entries which are $Q_{i,i}=1$ for each $i \in \{1,\dots,n\}$.

This would be considered 1 level of replica symmetry breaking.
Because of the role of the parameter $\nu-1$, one seeks the maximum
of the analogue of equation (\ref{eq:PFmin}) when $\nu \in (0,1)$, instead of the maximum.
The motivation for this particular choice is beyond our ability to review. We do recommend consulting the textbook \cite{MPV}.
An example of a $Q$ matrix with 2 levels of replica symmetry breaking is shown in Figure \ref{fig:2RSB}.
\subsection{Mathematical physics and probability developments}
Years later, a major step was taken in describing this approach by Bernard Derrida, as in  \cite{Derrida1} and \cite{Derrida2}, using the Random Energy Model,
which is an example of a problem related to {\em extreme value statistics}, as arises in the Gumbel distribution, for example.
Then Aizenman, Lebowitz and Ruelle \cite{ALR} as well as Ruelle \cite{Ruelle}, gave further perspectives, including by bringing in the appropriate
Poisson point process for Derrida's Generalized Random Energy Model. We also recommend the article of Aizenman and Ruzmaikina \cite{AizenmanRuzmaikina}.

Eventually the conjecture of Parisi was proved. First, Guerra and Toninelli invented a method of interpolation that brought easy new results \cite{GuerraToninelli}.
This was followed by an important paper of Guerra \cite{Guerra}.
Aizenman, Sims and one of the authors contributed a short paper \cite{ASS}.
Then Michel Talagrand solved the problem in a tour de force \cite{Talagrand}.
Later, Arguin and Aizenman  showed a simple proof if one assumes only a finite level of replica symmetry
breaking \cite{ArguinAizenman}.
Eventually, Dmitry Panchenko proved that the Parisi ansatz can be proved as a consequence
of just the Ghirlanda-Guerra identities, related to some results also arising from the Aizenman-Contucci identities \cite{Panchenko}
An excellent place to learn all this is the textbook by Panchenko \cite{PanchenkoTextbook}.
Another relevant reference is by Auffinger and Chen \cite{AuffingerChen}
wherein the stochastic optimization problem consequent to Parisi's ansatz is analyzed clearly and carefully.

\begin{figure}
\begin{center}
{\footnotesize
$$
	Q\, =\, \left[ \begin{array}{c|c}
\begin{array}{c|c}
\begin{matrix} 1 & q_2 & \dots & q_2 \\ q_2 & 1 & \dots & q_2 \\ \vdots & \vdots & \ddots & \vdots \\ q_2 & q_2 & \dots & 1\end{matrix} 
&
\begin{matrix} q_1 \end{matrix}\\
\hline
\begin{matrix} q_1\end{matrix}
&
\begin{matrix} 1 & q_2 & \dots & q_2 \\ q_2 & 1 & \dots & q_2 \\ \vdots & \vdots & \ddots & \vdots \\ q_2 & q_2 & \dots & 1\end{matrix} 
\end{array} & 
\begin{matrix} q_0 \end{matrix}\\
\hline
\begin{matrix} q_0 \end{matrix} &
\begin{array}{c|c}
\begin{matrix} 1 & q_2 & \dots & q_2 \\ q_2 & 1 & \dots & q_2 \\ \vdots & \vdots & \ddots & \vdots \\ q_2 & q_2 & \dots & 1\end{matrix} 
&
\begin{matrix} q_1 \end{matrix}\\
\hline
\begin{matrix} q_1\end{matrix}
&
\begin{matrix} 1 & q_2 & \dots & q_2 \\ q_2 & 1 & \dots & q_2 \\ \vdots & \vdots & \ddots & \vdots \\ q_2 & q_2 & \dots & 1\end{matrix} 
\end{array}
\end{array}
\right]
$$
}
\caption{
\label{fig:2RSB}
This is a schematic of the ultrametic block-matrix structure for a $Q$ matrix with 2 levels of replica symmetry
breaking (or 2 RSB). This type of matrix
is parametrized by the numbers
$0\leq q_0<q_1<q_2 \leq 1$.
}
\end{center}
\end{figure}

The most important point for us, here, in the present paper, is that the first step towards the full understanding of the Sherrington-Kirkpatrick
model using the physicists' approach was an exact, simple formula for the higher moments of the random partition function as in (\ref{eq:momentsPF}).

The hope of bringing the physicists' methods to bear on the Ulam problem motivates a more careful
analysis of the higher moments in the generalized Ulam problem, as we begin to do in the present article.

\section{Roots of a class of quadratic equations}
\label{sec:roots}
The elementary results of this appendix are used several times throughout the calculations
in subsequent sections. So we gather the notation, here.
For a quadratic equation of the form
\begin{equation}
\label{eq:cquad}
	\xi^2+1 -  \kappa \xi\, =\, 0\, , 
\end{equation}
where $\kappa \in \R$,
the quadratic formula for the roots gives
\begin{equation}
	\xi_{\pm}\, =\, \frac{\kappa \pm \sqrt{\kappa^2-4}}{2}\, ,
\end{equation}
which in turn can be written as 
\begin{equation}
\label{eq:csqrt}
	\xi_{\pm}\, =\, \frac{1}{4}\left(\sqrt{\kappa+2} \pm \sqrt{\kappa-2}\right)^2\, ,
\end{equation}
if we know that $\kappa\geq 2$.
See Remark \ref{rem:sqrt}.
Let us denote $\rho_+$ and $\rho_-$ as 
\begin{equation}
	\rho_{\pm}\, =\, \sqrt{\kappa \pm 2}\, .
\end{equation}
Therefore, 
\begin{equation}
\label{eq:xiRoots}
	\xi_{\pm}\, =\, \frac{1}{4}\, \left(\rho_+ \pm \rho_-\right)^2\, .
\end{equation}
This is useful because we frequently use the difference of squares formula in the lemmas of Section \ref{sec:PinskyDiagonal}.

\section{Contour integral alternative derivation of Lemma \ref{lem:gamma2}}
\label{sec:gamma2}

By the equation (\ref{eq:gamma2contour}) we may write
\begin{equation}
	\gamma^{(2)}(x^2,y^2)\, =\, \oint_{\mathcal{C}(0;1)} \oint_{\mathcal{C}(0;1)} \gamma(x\xi,y\zeta)
	\gamma\left(\frac{x}{\xi}\, ,\ \frac{y}{\zeta}\right)\, \frac{d\xi}{2\pi i \xi}\, \cdot \frac{d\zeta}{2\pi i \zeta}\, ,
\end{equation}
which may be rewritten as 
\begin{equation}
	\gamma^{(2)}(x^2,y^2)\, =\, \oint_{\mathcal{C}(0;1)} \oint_{\mathcal{C}(0;1)} \frac{1}{1-(x\xi + y \zeta)}\, 
	\cdot \frac{1}{1-(x\xi^{-1}+y\zeta^{-1})}\, \cdot  \frac{d\xi}{2\pi i \xi}\, \cdot \frac{d\zeta}{2\pi i \zeta}\, .
\end{equation}
This may be rewritten as 
\begin{equation}
\label{eq:gamma2Double}
	\gamma^{(2)}(x^2,y^2)\, =\, -\frac{1}{4\pi^2}\, \oint_{\mathcal{C}(0;1)} \oint_{\mathcal{C}(0;1)} \frac{1}{1-(x\xi + y \zeta)}\, 
	\cdot \frac{1}{\xi \zeta-(x\zeta+y\xi)}\, d\xi\, d\zeta\, .
\end{equation}
Let us first integrate over $\zeta$.
Therefore, we use the partial fraction expansion for $\zeta$:
\begin{equation}
\begin{split}
	\frac{1}{1-(x\xi + y \zeta)}\, 
	\cdot \frac{1}{\xi \zeta-(x\zeta+y\xi)}\,
	&=\, \frac{1}{1-x\xi - y \zeta}\, 
	\cdot \frac{1}{(\xi-x) \zeta-y\xi}\\
	&=\, \frac{1}{(\xi-x)(1-x\xi) - y^2\xi} \left(\frac{y}{1-x\xi - y \zeta} + \frac{\xi-x}{(\xi-x) \zeta-y\xi}\right)
\end{split}
\end{equation}
Therefore, considering the integrand of (\ref{eq:gamma2Double}) as a function of $\zeta$, the poles are at
\begin{equation}
	\alpha\, =\, \frac{1-x\xi}{y}\, ,\ \text{ and }\ \beta\, =\, \frac{y\xi}{\xi-x}\, .
\end{equation}
For $|x|$ and $|y|$ sufficiently small, and for $|\xi|=1$, we see that $|\alpha|>1$ and $|\beta|<1$.
Therefore, from the residue theorem (see for example Theorem 17 in Section 4.5 of Ahlfors \cite{Ahlfors})
\begin{equation}
	\gamma^{(2)}(x^2,y^2)\, =\, \frac{1}{2\pi i}\, \oint_{\mathcal{C}(0;1)} \frac{1}{(\xi-x)(1-x\xi) - y^2\xi}\, d\xi\, .
\end{equation}
Now we may factorize the denominator in the integrand of this integral, as a polynomial in $\xi$ in order to find the locations of the poles
and calculate the residues. Note that if we had $x=0$ the integral $(2\pi i)^{-1} \oint_{\mathcal{C}(0;1)} (\xi - y^2 \xi)^{-1}\, d\xi$ would
give $(1-y^2)^{-1}$ by the residue theorem. Let us assume $x\neq 0$, and then note
\begin{equation}
\begin{split}
	(\xi-x)(1-x\xi) - y^2\xi
	&=\, \xi-x -x \xi^2+x^2\xi- y^2 \xi\\
	&=\, -x \left(\xi^2-\frac{1+x^2-y^2}{x}\, \xi +1\right)
\end{split}
\end{equation}
By the formulas from Section \ref{sec:roots} we have for $k = (1+x^2-y^2)/x$, the roots are at
\begin{equation}
	\xi_{\pm}\, =\, \frac{1}{4}\left(\sqrt{k+2} \pm \sqrt{k-2}\right)^2\,
	=\, \frac{1}{4}\left(\sqrt{\frac{1+x^2-y^2+2x}{x}}\pm\sqrt{\frac{1+x^2-y^2-2x}{x}}\right)^2\, ,
\end{equation}
as long as we assume
\begin{equation}
	k\, =\, \frac{1+x^2-y^2}{x} \in \C \setminus (-2,2)\, .
\end{equation}
Note that we are justified in assuming that $x,y \in [0,\infty)$ and are small, since determining the power series
for $x$ and $y$ in that regime determines it elsewhere by the identity theorem of complex analysis and the 
discussion of the Fubini-Tonelli theorem before, between equations (\ref{eq:betaSecondFormula}) and (\ref{eq:gamma2def})
Note that for $x>0$ sufficiently small, we know that $\xi_+>1$ and $\xi_-<1$. (Note that $\xi_+ \xi_- = 1$.)
Therefore, by the residue theorem, we have
\begin{equation}
	\gamma^{(2)}(x^2,y^2)\, =\, -\frac{1}{x} \cdot \frac{1}{\xi_- - \xi_+}\, =\, \frac{1}{x} \cdot \frac{1}{\xi_+-\xi_-}\, .
\end{equation}
But it is easy to see the algebraic simplification
\begin{equation}
\begin{split}
	x\cdot (\xi_+-\xi_-)\, 
	&=\, \frac{x}{4}\, \left(\left(\sqrt{k+2}+\sqrt{k-2}\right)^2-\left(\sqrt{k+2}-\sqrt{k-2}\right)^2\right)\\
	&=\, \frac{x}{4}\, \left(2\, \sqrt{k+2}\right) \left(2\, \sqrt{k-2}\right)\, .
\end{split}
\end{equation}
Therefore, this may be further simplified as 
\begin{equation}
\begin{split}
	x\cdot (\xi_+-\xi_-)\, 
	&=\, \sqrt{\big((1+x)^2-y^2\big)\big((1-x)^2-y^2\big)}\\
	&=\, \sqrt{\big((1+x^2-y^2)+2x\big)\big((1+x^2-y^2)-2x\big)}\\
	&=\, \sqrt{(1+x^2-y^2)^2-4x^2}
\end{split}
\end{equation}
Then we expand
\begin{equation}
\begin{split}
	(1+x^2-y^2)^2-4x^2\,
	&=\, 1 + 2(x^2-y^2) + (x^2-y^2)^2 - 4x^2\\
	&=\, 1 + (2x^2-2y^2) + (x^4-2x^2 y^2+y^4) - 4x^2\\
	&=\, 1 - 2x^2 -  2 y^2 + x^4 + 2 x^2 y^2 + y^4-4x^2y^2\\
	&=\, 1 - 2 (x^2+y^2) + (x^2+y^2)^2 - 4 x^2 y^2\\
	&=\, (1-(x^2+y^2))^2 - 4 x^2 y^2\, .
\end{split}
\end{equation}
Therefore, we have, 
\begin{equation}
	\gamma^{(2)}(x^2,y^2)\, =\, \frac{1}{\sqrt{\big(1-\left(x^2+y^2\right)\big)^2-4x^2 y^2}}\, .
\end{equation}
Recall that it suffices to assume that we have $x,y \in [0,\infty)$ both sufficiently small to specify the actual power
series in complex $x$ and $y$.
Therefore, we obtain the desired result by replacing $x\geq 0$ by $\sqrt{x}\geq 0$ and replacing $y\geq 0$ by $\sqrt{y}\geq 0$.

\section{Pochhamer identities for $-1/2$ and negative integers}
\label{sec:Poc}

Here we state what needs to be used to derive the equivalence of (\ref{eq:LeftMostBridge}) and (\ref{eq:gamma2combinatorial}).
Note that
\begin{equation}
	(-1/2)_n\, =\, \left(-\frac{1}{2}\right)\left(-\frac{3}{2}\right)\cdots \left(-\frac{1+2(n-1)}{2}\right)\, ,
\end{equation}
which equals
\begin{equation}
	(-1/2)_n\, =\, \frac{(-1)^n}{2^n} \left(1\right)\left(3\right)\cdots \left(2n-1\right)\, ,
\end{equation}
which in turn may be rewritten as 
\begin{equation}
	(-1/2)_n\, =\, \frac{(-1)^n}{2^n} \cdot \frac{(2n)!}{(2)(4)\cdots (2n)}\,
	=\, \frac{(-1)^n}{2^{2n}} \cdot \frac{(2n)!}{n!}\, .
\end{equation}
This well-known fact is easily found, for example on Wikipedia it can be deduced
from the formula $\Gamma(1/2)=\sqrt{\pi}$ and $\Gamma(\frac{1}{2}-n)=(-1)^n 2^{2n} n!\sqrt{\pi}/(2n)!$.

We also know that
for the Pochhammer symbol at a negative integer we have
\begin{equation}
	(-n)_k\, =\, (-n)(-n-1)\cdots(-n-k+1)\,=\, (-1)^k (n)(n+1)(n+k-1)\, 
	=\, (-1)^k\, \frac{(n+k-1)!}{(n-1)!}\, .
\end{equation}
Hence, in particular replacing $n$ by $2n+1$ we have
\begin{equation}
	(-2n-1)_k\, =\, (-1)^k\, \frac{(2n+k)!}{(2n)!}\, .
\end{equation}
Therefore, putting these two together, we see that the summands in (\ref{eq:LeftMostBridge})
are simplified by using the formula
\begin{equation}
	\frac{(-1/2)_n}{n!} \cdot \frac{(-2n-1)_k}{k!}\, =\, 
\frac{(-1)^n}{2^{2n}} \cdot \frac{(2n)!}{\left(n!\right)^2}\, \cdot \frac{(2n+k)!}{(2n)!\, k!}\,
=\,  \frac{(-1)^{n+k}}{2^{2n}} \binom{2n+k}{n,n,k}\, .
\end{equation}
Hence, the left-hand-side of equals (\ref{eq:LeftMostBridge})
\begin{equation}
	\sum_{n=0}^{\infty} \frac{(-1/2)_n}{n!} \cdot \frac{(-2n-1)_k}{k!} \binom{k}{j} (-1)^{n+k} 4^n 
=\, \sum_{n=0}^{\infty} \frac{(-1)^{n+k}}{2^{2n}} \binom{2n+k}{n,n,k}\, \binom{k}{j} (-1)^{n+k} 4^n\, ,\end{equation}
which may be further simplified using
\begin{equation}
	\binom{2n+k}{n,n,k}\, \binom{k}{j}\, =\, \binom{2n+k}{n,n,j,k-j}\, .
\end{equation}
So, the left-hand-side of equals (\ref{eq:LeftMostBridge}) may be rewritten as 
\begin{equation}
	\sum_{n=0}^{\infty} \frac{(-1/2)_n}{n!} \cdot \frac{(-2n-1)_k}{k!} \binom{k}{j} (-1)^{n+k} 4^n 
=\, \sum_{n=0}^{\infty} \binom{2n+k}{n,n,j,k-j}\, .
\end{equation}
In  (\ref{eq:LeftMostBridge})  we wanted to verify that, when we replace
$j\leftarrow \ell-n$ and $k-j\leftarrow m-n$ (using replacement notation as in APL)
then we get $\binom{\ell+m}{\ell,m} \binom{\ell+m}{\ell,m}$.
So this equation may be rewritten as 
\begin{equation}
\sum_{n=0}^{\infty} \binom{2n+\ell+m}{n,n,\ell-n,m-n}\, 
=\, \binom{\ell+m}{\ell,m} \binom{\ell+m}{\ell,m}\, .
\end{equation}
We then take account of the fact that in the multinomial coefficient notation, the multinomial coefficient
equals $0$ unless all the terms in the bottom are nonnegative integers. So we require $\ell-n\geq 0$
and $m-n\geq 0$.
Then the infinite sum for $n \in \{0,1,\dots\}$ may be restricted to $n \in \{0,\dots,\min(\ell,m)\}$.
That is how we reduce equation (\ref{eq:LeftMostBridge}) to (\ref{eq:gamma2combinatorial})
or its even more direct version
\begin{equation}
\sum_{n=0}^{\min(m,n)} \binom{2n+\ell+m}{n,n,\ell-n,m-n}\, 
=\, \binom{\ell+m}{\ell,m} \binom{\ell+m}{\ell,m}\, .
\end{equation}

\section{Elementary proofs from the quadratic formula}
\label{sec:quadratic}

Here we state the proofs of elementary lemmas in Section \ref{sec:PinskyDiagonal} which rely mainly on the quadratic formula
for the roots of a quadratic equation in a single variable.
%
%
We prove the elementary lemmas, using the notation introduced in Section \ref{sec:roots}.

\begin{proofof}{\bf Proof of Lemma \ref{lem:calQ1roots}:}
The two factors in (\ref{eq:calQ1Def}) are
\begin{equation}
\label{eq:TwoFactors}
\big(x \cdot(\xi^2+1) - (1+2x)\xi\big)\ \text{ and }\ \big(x \cdot(\xi^2+1) - (1-2x)\xi\big)\, .
\end{equation}
So the roots of $\mathcal{Q}_1(x;\cdot)$ are the solutions of 
\begin{equation}
(\xi^2+1) - \frac{1+2x}{x}\, \cdot\xi\, =\, 0\, ,
\end{equation}
and the solutions of 
\begin{equation}
(\xi^2+1) - \frac{1-2x}{x}\, \cdot\xi\, =\, 0\, .
\end{equation}
Let us denote
\begin{equation}
	\kappa_+(x)\, =\, \frac{1+2x}{x}\ \text{ and }\ \kappa_-(x)\, =\, \frac{1-2x}{x}\, .
\end{equation}
These are both elements of $(2,\infty)$ because we assumed $x \in (0,1/4)$
(so that $1-2x>2x$).
Let us define $\rho^{(+)}_+(x)$, 
$\rho^{(+)}_-(x)$,
$\rho^{(-)}_+(x)$,
and $\rho^{(-)}_+(x)$ as
\begin{equation}
	\rho^{(+)}_{\pm}(x)\, =\, \sqrt{\kappa_+(x) \pm 2}\ \text{ and }\
	\rho^{(-)}_{\pm}(x)\, =\, \sqrt{\kappa_-(x) \pm 2}\, .
\end{equation}
Then we have 
\begin{align}
	c_1(x)\, &=\, \frac{1}{4}\, \left(\rho^{(+)}_{+}(x) - \rho^{(+)}_{-}(x)\right)^2\, ,&
	c_2(x)\, &=\, \frac{1}{4}\, \left(\rho^{(-)}_{+}(x) - \rho^{(-)}_{-}(x)\right)^2\, ,\\
	d_1(x)\, &=\, \frac{1}{4}\, \left(\rho^{(-)}_{+}(x) + \rho^{(-)}_{-}(x)\right)^2\, ,&
	d_2(x)\, &=\, \frac{1}{4}\, \left(\rho^{(+)}_{+}(x) + \rho^{(+)}_{-}(x)\right)^2\, .
\end{align}
(This can be checked by expanding.)
Therefore, by (\ref{eq:xiRoots}), these are the roots of $\mathcal{Q}_1(x;\cdot)$.
Since the leading coefficient of $\mathcal{Q}_1(x;\cdot)$ is $x^2$, the lemma is proved.
\end{proofof}

\begin{proofof}{\bf Proof of Lemma \ref{lem:calQ2roots}:}
The proof is much like the proof of Lemma \ref{lem:calQ1roots}.
The two factors in (\ref{eq:calQ2Def}) are
\begin{equation}
\label{eq:TwoFactors}
\big(x \cdot(\xi^2+1) - (1+\sqrt{4x^2+w^2})\xi\big)\ \text{ and }\ \big(x \cdot(\xi^2+1) - (1-\sqrt{4x^2+w^2})\xi\big)\, .
\end{equation}
So the roots of $\mathcal{Q}_2(x,w;\cdot)$ are the solutions of 
\begin{equation}
(\xi^2+1) - \frac{1+\sqrt{4x^2+w^2}}{x}\, \cdot\xi\, =\, 0\, ,
\end{equation}
and the solutions of 
\begin{equation}
(\xi^2+1) - \frac{1-\sqrt{4x^2+w^2}}{x}\, \cdot\xi\, =\, 0\, .
\end{equation}
Let us denote
\begin{equation}
	\widetilde{\kappa}_+(x,w)\, =\, \frac{1+\sqrt{4x^2+w^2}}{x}\ \text{ and }\ \widetilde{\kappa}_-(x,w)\, =\, \frac{1-\sqrt{4x^2+w^2}}{x}\, .
\end{equation}
These are both elements of $(2,\infty)$ because we assumed $x \in (0,1/4)$ and $0<w<\sqrt{1-4x}$; hence, 
\begin{equation}
	1-\sqrt{4x^2+w^2}\, <\, 2x\, .
\end{equation}
Let us define $\widetilde{\rho}^{(+)}_+(x,w)$, 
$\widetilde{\rho}^{(+)}_-(x,w)$,
$\widetilde{\rho}^{(-)}_+(x,w)$,
and $\widetilde{\rho}^{(-)}_-(x,w)$ as
\begin{equation}
	\widetilde{\rho}^{(+)}_{\pm}(x,w)\, =\, \sqrt{\widetilde{\kappa}_+(x,w) \pm 2}\ \text{ and }\
	\widetilde{\rho}^{(-)}_{\pm}(x,w)\, =\, \sqrt{\widetilde{\kappa}_-(x,w) \pm 2}\, .
\end{equation}
Then we have 
\begin{align}
	\mathcal{A}_1(x,w)\, &=\, \frac{1}{4}\, \left(\widetilde{\rho}^{(+)}_{+}(x,w) - \widetilde{\rho}^{(+)}_{-}(x,w)\right)^2\, ,&
	\mathcal{A}_2(x,w)\, &=\, \frac{1}{4}\, \left(\widetilde{\rho}^{(-)}_{+}(x,w) - \widetilde{\rho}^{(-)}_{-}(x,w)\right)^2\, ,\\
	b_1(x,w)\, &=\, \frac{1}{4}\, \left(\widetilde{\rho}^{(-)}_{+}(x,w) + \widetilde{\rho}^{(-)}_{-}(x,w)\right)^2\, ,&
	b_2(x,w)\, &=\, \frac{1}{4}\, \left(\widetilde{\rho}^{(+)}_{+}(x,w) + \widetilde{\rho}^{(+)}_{-}(x,w)\right)^2\, .
\end{align}
Therefore, by (\ref{eq:xiRoots}), these are the roots of $\mathcal{Q}_2(x,w;\cdot)$.
\end{proofof}
\begin{proofof}{\bf Proof of Lemma \ref{lem:rootsOrder}:}
We use the notation from the proofs of Lemmas \ref{lem:calQ1roots} and Lemma \ref{lem:calQ2roots}.
Considering the function $\Delta : (2,\infty) \to (0,\infty)$ as 
\begin{equation}
	\Delta(\kappa)\, =\, \sqrt{\kappa+2} - \sqrt{\kappa-2}\, ,
\end{equation}
we see that
\begin{equation}
	\Delta'(\kappa)\, =\, - \frac{\sqrt{\kappa+2} - \sqrt{\kappa-2}}{2\sqrt{\kappa^2-4}}\, ,
\end{equation}
which is negative. Therefore, $\Delta$ is a decreasing function. Note that its range is actually $(0,2)$ with $\Delta(\infty) = \lim_{\kappa\to\infty} \Delta(\kappa)=0$. Let us also define $\Delta(0)=2$, which is its value by continuous extension.
Then all this means
\begin{equation}
	\forall \kappa_1, \kappa_2 \in [2,\infty) \cup \{\infty\}\, ,\text{ we have }\ \kappa_1<\kappa_2 \Rightarrow \Delta(\widetilde{\kappa}_2)\, <\, \Delta(\kappa_1)\, .
\end{equation}
Then, by a difference of squares, or monotonicity of the square function on the nonnegative domain
\begin{equation}
	\forall \kappa_1, \kappa_2 \in [2,\infty) \cup \{\infty\}\, ,\text{ we have }\ \kappa_1<\kappa_2 \Rightarrow 
\left(\Delta(\widetilde{\kappa}_2)\right)^2\, <\, \left(\Delta(\kappa_1)\right)^2\, .
\end{equation}
But now notice that
\begin{align}
\notag	0\, &=\, \frac{1}{4}\, \left(\Delta(\infty)\right)^2\, , 
	& \mathcal{A}_1(x,w)\, &=\, \frac{1}{4}\, \left(\Delta(\widetilde{\kappa}_+(x,w))\right)^2\, ,\\[5pt]
	c_1(x)\, &=\, \frac{1}{4}\,  \left(\Delta(\kappa_+(x))\right)^2\, ,
	& c_2(x)\,  &=\, \frac{1}{4}\,  \left(\Delta(\kappa_-(x))\right)^2\, ,\\[5pt]
\notag
	\mathcal{A}_2(x,w)\, &=\, \frac{1}{4}\, \left(\Delta(\widetilde{\kappa}_-(x,w))\right)^2\, ,
	& 1 &=\, \frac{1}{4}\, \left(\Delta(2)\right)^2\, .
\end{align}
Therefore, we will know
\begin{equation}
\label{ineq:rootsOrder}
	0\, <\, \mathcal{A}_1(x,w)\, <\, c_1(x)\, <\, c_2(x)\, <\, \mathcal{A}_2(x,w)\, <\, 1\, ,
\end{equation}
if we can establish
\begin{equation}
\label{ineq:kappaOrder}
	0\, <\, \widetilde{\kappa}_-(x,w)\, <\, \kappa_-(x)\, <\, 
	\kappa_+(x)\, <\, \widetilde{\kappa}_+(x,w)\, <\, \infty\, .
\end{equation}
But clearly $\widetilde{\kappa}_+(x,w)<\infty$ and $\widetilde{\kappa}_-(x,w)>2$ as we showed in the proof of  
Lemma \ref{lem:calQ2roots}.
Now
\begin{equation}
\widetilde{\kappa}_+(x,w)-\kappa_+(x)\, =\, (2x)^{-1}(\sqrt{4x^2+w^2} - 2x)\, >\, 0\, ,
\end{equation}
because we assumed $w>0$.
We have the identical positive quantity for 
\begin{equation}
\kappa_-(x)-\widetilde{\kappa}_-(x,w)\, =\, (2x)^{-1}(\sqrt{4x^2+w^2} - 2x)\, .
\end{equation}
Finally, we note that $\kappa_+(x)-\kappa_-(x)=4$ which is positive. Therefore, equation (\ref{ineq:kappaOrder})
is established. Now the order 
\begin{equation}
	1\, <\, b_1(x,w)\, <\, d_1(x)\, <\, d_2(x)\, <\, b_2(x,w)\, <\, \infty\, ,
\end{equation}
follows from (\ref{ineq:rootsOrder}) and equations (\ref{eq:Q1inversive}) and (\ref{eq:Q1inversive}).
\end{proofof}
\begin{proofof}{\bf Proof of Lemma \ref{lem:sqrtContinuation}:}
To see that $f(x;\cdot)$ is analytic, let us define $h(x;\cdot) : \C\setminus\{0\} \to \C$ as
\begin{equation}
	h(x;\xi)\, =\, \left(1 - x \cdot(\xi + \xi^{-1})\right)^2-4x^2\, .
\end{equation} 
Then we may write $f(x;\xi) = \sqrt{h(x;\xi)}$, which is analytic, as long as we prove
\begin{equation}
h(x;\cdot)\big(\mathcal{U}(0;d_1(x)) \setminus \overline{\mathcal{U}}(0;c_1(x))\big)\, =\, 
\left\{h(x;\xi)\, :\, \mathcal{U}(0;d_1(x)) \setminus \overline{\mathcal{U}}(0;c_1(x))\right\}\,
\end{equation} 
is a subset of 	$\C \setminus (-\infty,0]$.
Since $4x^2+1 \in \R$, then $h(x;\xi) \in (-\infty,0]$ only if
\begin{equation}
1 - x \cdot(\xi+\xi^{-1}) \in \R \cup i\R\, ,
\end{equation}
where $i\R$ means $\{iy\, :\, y \in \R\}$. If we decompose $\xi$ into its modulus and phase as 
\begin{equation}
	\xi\, =\, r e^{i\theta}\, ,
\end{equation}
for $r \in [0,\infty)$ and $\theta \in \R$, then we have
\begin{equation}
\label{eq:hArg}
1 - x \cdot(\xi+\xi^{-1})\, =\, 1 - x\cdot (r+r^{-1}) \cos(\theta) - i x \cdot (r-r^{-1}) \sin(\theta)\, .
\end{equation}
So, in order to have this quantity be in $\R$ requires $r=1$ or $\theta \in \pi \Z$, which means $|\xi|=1$
or $\xi \in \R$.
To have the right-hand-side of (\ref{eq:hArg}) be an element of $i\R$ would require $x \cdot (r+r^{-1})\cos(\theta)=1$.

Let us first consider the case that (\ref{eq:hArg}) is an element of $i\R$. This requires
\begin{equation}
\label{ineq:ImaginaryCondition}
	r+r^{-1}\, \geq\, \frac{1}{x}\, ,
\end{equation}
and $\cos(\theta) = 1/\big(x\cdot (r+r^{-1})\big)$.
The inequality in (\ref{ineq:ImaginaryCondition}) may be rewritten as $r^2+1-\frac{1}{x}\, \cdot r\geq 0$. 
Consulting  Appendix  \ref{sec:roots}, we see that
\begin{equation}
	r^2+1-\frac{1}{x}\, \cdot r\, =\, \left(r-\mathfrak{R}_+(x)\right)\left(r-\mathfrak{R}_-(x)\right)\, ,
\end{equation}
for the two real roots:
\begin{equation}
\mathfrak{R}_+(x)\, =\, \frac{1+\sqrt{1-4x^2}}{2x}\quad \text{ and }\quad
\mathfrak{R}_-(x)\, =\, \frac{1-\sqrt{1-4x^2}}{2x}\, .
\end{equation}
They are real and $0<\mathfrak{R}_-(x)<\mathfrak{R}_+(x)$ 
because we have assumed that $x \in (0,1/4)$. 
(It would have sufficed to have $0<x<1/2$.)
Therefore, equation (\ref{ineq:ImaginaryCondition}) is equivalent to 
\begin{equation}
	r \in \R \setminus \left(\mathfrak{R}_-(x),\mathfrak{R}_+(x)\right)\, .
\end{equation}
That is the condition for the left-hand-side of equation (\ref{eq:hArg})
to be an element of $i\R$,
as long as $\xi= r e^{i\theta}$ where $\cos(\theta) = 1/\big(x\cdot(r+r^{-1})\big)$.
In other words,
\begin{equation}
\label{eq:conditionImag}
\forall \xi \in \C\, ,\ \text{ we have }\ |\xi| \in (\mathfrak{R}_-(x),\mathfrak{R}_+(x)) \Rightarrow 1-x\cdot(\xi+\xi^{-1}) \not\in i\R
\end{equation}

Next, let us change attention to the condition for $1-x\cdot (\xi+\xi^{-1})$ to be in $\R$.
We must have $|\xi|=1$ or $\xi \in \R$.
But if $|\xi|=1$ then $1-x\cdot (\xi+\xi^{-1}) = 1-2x\cdot \cos(\theta)>2x$ since we assumed $x \in (0,1/4)$.
So in this case $h(x;\xi)$ is in $\mathcal{D}_1$ from Remark \ref{rem:sqrt}.
So, instead, consider the case that $\xi$ is in $\R$. Then we prefer changing  the notation
to $\xi=t \in \R$ (which could potentially be negative, which is why we avoid recycling the letter $r$ which was
positive before). Then to have $h(x;t)\leq 0$ requires
\begin{equation}
	h(x,t)\, =\, \frac{\mathcal{Q}_1(x;t)}{t^2}\, \leq\, 0\, ,
\end{equation}
which requires, by Lemma \ref{lem:calQ1roots},
\begin{equation}
	t \in \R \setminus \left(c_2(x),d_1(x)\right)\, .
\end{equation}
So combining this with (\ref{eq:conditionImag}) we have
\begin{equation}
\label{eq:condRealImag}
	\forall \xi \in \C\, ,\ \text{ we have }\ |\xi| \in  \left(c_2(x),d_1(x)\right) \cap 
\left(\mathfrak{R}_-(x),\mathfrak{R}_+(x)\right) \Rightarrow h(x;\xi) \in \mathcal{D}_1\, .
\end{equation}
It is easy to see that 
\begin{equation}
	\mathfrak{R}_-(x)<c_2(x)<d_1(x)<\mathfrak{R}_+(x)\, ,
\end{equation}
for $x \in (0,1/4)$. For instance $1-2x\geq \sqrt{1-4x}$
hence $\sqrt{1-4x^2}\geq 2x+\sqrt{1-4x}$ because
\begin{equation}
(1-2x)(1+2x)\, \geq\, (1-2x)^2 + 4x \sqrt{1-4x}\, =\, (2x+\sqrt{1-4x})^2\, .
\end{equation}
Therefore, for analyticity of $\sqrt{h(x;\cdot)}$ equation (\ref{eq:condRealImag}) reduces to just $|\xi| \in (c_2(x),d_1(x))$.
\end{proofof}

\section{Elementary proofs from contour deformations}
\label{sec:Deform}

Here we will prove Lemmas \ref{lem:sqrtContinuationB} and \ref{lem:sqrtContinuationC}.

\begin{proofof}{\bf Proof of Lemma \ref{lem:sqrtContinuationB}:}
Note that if $\xi \in (c_2(x),d_1(x))$ then by Lemma \ref{lem:rootsOrder} we have
\begin{equation}
\label{eq:xiOrder}
c_1(x)<c_2(x)<\xi<d_1(x)<d_2(x)\, .
\end{equation}
Therefore, for $\mathcal{D}_1$ as in Remark \ref{rem:sqrt},
\begin{equation}
\label{eq:calD1condition}
	\forall \xi \in (c_2(x),d_1(x))\, ,\ \text{ we have }\ \xi - c_1(x)\, ,\ \xi-c_2(x)\, ,\ d_1(x)-\xi\, ,\ d_2(x)-\xi\, \in\, \mathcal{D}_1\, .
\end{equation}
Similarly, if $\xi \in \C \setminus \R$ then we also have the conditions on the right-hand-side of  (\ref{eq:calD1condition})
because the differences above are also in $\C \setminus \R$
(because $c_1(x)$, $c_2(x)$, $d_1(x)$ and $d_2(x)$ are all in $\R$).
So this proves that for $\xi \in \mathcal{D}_3(x)$, we have the conditions on the right-hand-side
of (\ref{eq:calD1condition}).
So $g(x;\cdot)$ is analytic, as given by the formula in (\ref{eq:gDefinition}).

Note that $\big(g(x;\xi)\big)^2 = \mathcal{Q}_1(x;\xi)$ by Lemma \ref{lem:calQ1roots},
equation (\ref{eq:calQ1roots}).
Also note that $1$ is in $(c_2(x),d_1(x))$ by Lemma \ref{lem:rootsOrder} .
So $g(x;1) \in (0,\infty)$ by (\ref{eq:gDefinition}) and (\ref{eq:xiOrder}).
Since $\big(g(x;\xi)\big)^2$ and $\big(f(x;\xi)\big)^2$ satisfy
\begin{equation}
	\big(g(x;\xi)\big)^2\, =\, \mathcal{Q}_1(x;\xi)\ \text{ and }\ \big(f(x;\xi)\big)^2\, =\, \frac{\mathcal{Q}_1(x;\xi)}{\xi^2}\, ,
\end{equation}
and since $g(x;1)$ and $f(x;1)$ have the same phase (they are both real and positive)
we see that $g(x;\xi)/\xi$ and $f(x;\xi)$ are equal, for $\xi$ in a neighborhood of $1$. 
But then
define the domain $\widetilde{\mathcal{D}}_3(x)$ by
\begin{equation}
	\widetilde{\mathcal{D}}_3(x)\, =\, \mathcal{D}_2(x) \cap \mathcal{D}_3(x)\, 
	=\,  \big(\mathcal{U}(0;d_1(x)) \setminus \overline{\mathcal{U}}(0;c_1(x))\big) \setminus (c_1(x),d_1(x))\, .
\end{equation}
By analytic continuation this means
\begin{equation}
\label{eq:Equalityfg}
	\forall \xi \in \widetilde{\mathcal{D}}_3(x)\, ,\
	\text{ we have }\ g(x;\xi)\, =\, \xi \cdot f(x;\xi)\, .
\end{equation}
This follows because $\widetilde{\mathcal{D}}_3(x)$ is a connected,
simply connected set. Therefore, there is unique analytic continuation of both $\xi \mapsto g(x;\xi)$ and $\xi \mapsto \xi \cdot f(x;\xi)$
from a neighborhood of $1$ to the entire domain $\widetilde{\mathcal{D}}_3(x)$.
So $f(x;\xi)=g(x;\xi)/\xi$ on that whole domain.
\end{proofof}
\begin{proofof}{\bf Proof of Lemma \ref{lem:sqrtContinuationC}:}
We note that by continuity of the analytic function $g$, and the fact that $\mathcal{D}_4(x)$
is a subset of the closure of $\mathcal{D}_3(x)$ (which is all of $\C$),
we may give the alternative formula
\begin{equation}
\label{eq:tildegDef4}
	\forall \xi \in \mathcal{D}_4(x)\, ,\ \text{ we have }\ \widetilde{g}(x;\xi)\, =\, 
\lim_{\substack{\zeta \to \xi\\ \zeta \in \mathcal{D}_3(x)}}\, g(\zeta)\, .
\end{equation}
What we mean by saying this is the following: equations (\ref{eq:tildegDef2}) and 
(\ref{eq:tildegDef3}) are equivalent to  (\ref{eq:tildegDef4}).
In order to see that  for $\widetilde{g}$  the limits are well-defined on all of $\mathcal{D}_4(x)$,  note that for $\xi \in (-\infty,c_1(x))$ we have for $\zeta=\xi+i \epsilon$ with $\epsilon>0$,
that 
\begin{equation}
	\sqrt{\zeta-c_1(x)}\, =\, \sqrt{-(c_1(x)-\xi)+i\epsilon}\, =\, i\, \sqrt{c_1(x)-\xi}\, + \frac{\epsilon}{\sqrt{c_1(x)-\xi}} + O(\epsilon)^2\, ,
\end{equation}
and
\begin{equation}
	\sqrt{\zeta-c_2(x)}\, =\, \sqrt{-(c_2(x)-\xi)+i\epsilon}\, =\, i\, \sqrt{c_2(x)-\xi}\, + \frac{\epsilon}{\sqrt{c_2(x)-\xi}} + O(\epsilon)^2\, ,
\end{equation}
as $\epsilon \to 0^+$.
Therefore, we have 
\begin{equation}
\lim_{\epsilon \to 0^+} g(x;\xi+i\epsilon)\, =\, x \cdot \left(i\, \sqrt{c_1(x)-\xi}\right)
\left(i\, \sqrt{c_2(x)-\xi}\right)\, \cdot \sqrt{d_1(x)-\xi}\, \cdot \sqrt{d_2(x)-\xi}\, .
\end{equation}
But, similarly, we will have
\begin{equation}
\lim_{\epsilon \to 0^+} g(x;\xi-i\epsilon)\, =\,  x \cdot \left(-i\, \sqrt{c_1(x)-\xi}\right)
\left(-i\, \sqrt{c_2(x)-\xi}\right)\, \cdot \sqrt{d_1(x)-\xi}\, \cdot \sqrt{d_2(x)-\xi}\, .
\end{equation}
Since $i^2 = (-i)^2 = -1$ these two limits are equal. (The same will not be true for points $\xi \in (c_1(x),c_2(x))$, as we will see shortly.) 
Therefore, the limit exists and equals
\begin{equation}
	\widetilde{g}(x;\xi)\, =\, - x \cdot \sqrt{c_1(x)-\xi}\, 
\cdot \sqrt{c_2(x)-\xi}\, \cdot \sqrt{d_1(x)-\xi}\, \cdot \sqrt{d_2(x)-\xi}\, .
\end{equation}
A similar argument shows that if instead we had $\xi \in (d_2(x),\infty)$ then the limit $\lim_{\zeta \to \xi} g(x;\zeta)$ would exist and would equal
\begin{equation}
	\widetilde{g}(x;\xi)\, =\, - x \cdot \sqrt{\xi-c_1(x)}\, 
\cdot \sqrt{\xi-c_2(x)}\, \cdot \sqrt{\xi-d_1(x)}\, \cdot \sqrt{\xi-d_2(x)}\, .
\end{equation}
By elementary results of complex analysis, as in \cite{Ahlfors}, it is easy to see that $\widetilde{g}(x;\cdot)$
is analytic  on all of $\mathcal{D}_4(x)$ since it is the limit of the analytic function
$g(x;\cdot)$ which is continuous across $(-\infty,c_1(x))$ and $(d_2(x),\infty)$.
For example, using the Cauchy integral formula and Morera's theorem and the fact that the contour
integrals are continuous across $(-\infty,c_1(x))$ and $(d_2(x),\infty)$, one can check the integral conditions to be analytic
(and there are undoubtedly more elegant approaches using Riemann surfaces).
Now $\widetilde{g}(x;\cdot)/\xi$ equals $f(x;\cdot)$ on 
$\widetilde{\mathcal{D}}_3(x)$
by equations (\ref{eq:gEQUALSf}) and (\ref{eq:tildegDef2}), or for example (\ref{eq:Equalityfg}).
But by continuity, the equality can be extended to all of $\mathcal{D}_2(x)$ since 
$\mathcal{D}_2(x) \subseteq \mathcal{D}_4(x)$.
\end{proofof}
%

\section{Proof of lemmas related to the residue theorem}
\label{sec:residue}

\begin{proofof}{\bf Proof of Lemma \ref{lem:residue1}:}
For any $\epsilon\in (0,b_1(x,w)-1)$, the integrand,
\begin{equation}
\label{eq:IntegrandFirst}
\mathcal{I}(\xi)\, =\, \frac{w\cdot \xi+\widetilde{g}(x;\xi)}
{x^2\cdot \big(\xi  - \mathcal{A}_1(x,w)\big)\big(\xi-\mathcal{A}_2(x,w)\big)\big(\xi-b_1(x,w)\big)\big(\xi-b_2(x,w)\big)}\, 
\cdot \frac{1}{2\pi i}\, ,
\end{equation}
is analytic in the open domain $\mathcal{U}(0;1+\delta) \setminus \Big(\{\mathcal{A}_1(x,w),\mathcal{A}_2(x,w)\}\cup [c_1(x),c_2(x)]\Big)$. The value of the contour integral is independent of the choice of rectifiable arc/contour, or in our case, independent of the sum of contours as long as that sum is  homotopic
to $\mathcal{C}(0;1)$ in the domain of analyticity. (See for example \cite{Ahlfors} or \cite{PemantleWilson}.) Therefore,
for $\delta \in (0,1)$ sufficiently small we deform to the union of three contours
\begin{gather}
\Gamma_1(\delta)\, =\, \mathcal{C}(\mathcal{A}_1(x,w);\delta)\ \text{ and }\ \Gamma_2(\delta)\, =\, \mathcal{C}(\mathcal{A}_2(x,w);\delta)\, ,\\[5pt]
\notag
\Gamma_3(\delta) = \Gamma_1(\delta) + \Gamma_2(\delta) + \gamma_3(\delta) + \gamma_4(\delta)\, ,\\
\notag
\gamma_k(\delta)\, :\, \xi(t)\, =\, \widetilde{c}_k(\delta) \cdot (1-t) + \widetilde{c}_{k+1}(\delta) \cdot t\, ,\\
\text{ for }\ 0\leq t\leq 1\, ,\ \text{ for }\ k \in \{1,2,3,4\}\, ,\\
\notag 
\widetilde{c}_1(\delta)=\widetilde{c}_5(\delta)=c_1(x,w)-\delta\, ,\qquad \widetilde{c}_2(\delta)=c_2(x,w)-\delta\, ,\\
\notag
 \widetilde{c}_3(\delta)=c_2(x,w)+\delta\, ,\qquad \widetilde{c}_4(\delta)=c_1(x,w)+\delta\, .
\end{gather}
See Figure \ref{fig:sqrtNew}, cited previously. (Note that one can add and subtract line segments along the real axis external to these three contours
and internal to $\mathcal{C}(0;1)$ joining these contours to each other, and to $\mathcal{C}(0;1)$. In the appropriate sense,
the sum of these three contours is homotopic to $\mathcal{C}(0;1)$.)
Technically the rectangle $\Gamma_3(\delta)$ does intersect the boundary of the domain at 
$(\widetilde{c}_2(\delta)+\widetilde{c}_3(\delta))/2=c_2(x)$ and 
at $(\widetilde{c}_4(\delta)+\widetilde{c}_5(\delta))/2=c_1(x)$.
But $\mathcal{I}$
is continuous at those points so that this introduces no difficulty.
(It suffices to consider $\Gamma_3(\delta)$ as a limit of contours strictly inside the domain of analyticity.)

For $\Gamma_1(\delta)$ and $\Gamma_2(\delta)$ as we shrink $\delta$ we obtain the residues (as in Theorem 6, page 119 of \cite{Ahlfors})
\begin{equation}
\label{eq:ResiduesFirst}
	\forall k \in \{1,2\}\ \text{ we have }\
	\lim_{\delta \to 0^+} \oint_{\Gamma_k(\delta)} \mathcal{I}(\xi)\, d\xi\, 
	=\, 2 \pi i\, \cdot \lim_{\xi \to a_k(x,w)} \big(\xi - a_k(x,w)\big)\cdot \mathcal{I}(\xi)\, ,
\end{equation}
where we use the slightly unusual formula for the residues because,
in the lemmas of the last section, we have already established that there are simple poles
at $\mathcal{A}_1(x,w)$ and $\mathcal{A}_2(x,w)$. I.e., there are no higher-order poles.
Combining the two contributions from equation (\ref{eq:ResiduesFirst}) gives the formula for $\mathcal{A}_1(x,w)$ in (\ref{eq:A1formula}).

For the contribution to the integral coming from the rectangle $\Gamma_3(\delta)$, we may neglect the left and right sides $\Gamma_2(\delta)$
and $\gamma_3(\delta)$ because their arclengths vanish in the limit and $\mathcal{I}$ is continuous at $c_1(x)$ and $c_2(x)$.
The effect of taking the $\delta \to 0^+$ limit for $\Gamma_1(\delta)$ and $\Gamma_2(\delta)$ is as follows:
\begin{equation}
\label{eq:BranchFirst}
	\lim_{\delta \to 0^+} \oint_{\Gamma_3(\delta)} \mathcal{I}(\xi)\, d\xi\, 
	=\, \int_{c_1(x)}^{c_2(x)} \big(\mathcal{I}(\xi-i\cdot 0) - \mathcal{I}(\xi+i\cdot 0)\big)\, d\xi\, ,
\end{equation}
where, as is standard convention, we denote
\begin{equation}
\mathcal{I}(\xi+i\cdot 0)\, =\, \lim_{\delta\to 0^+} \mathcal{I}(\xi+i\cdot \delta)\ \text{ and }
\mathcal{I}(\xi-i\cdot 0)\, =\, \lim_{\delta\to 0^+} \mathcal{I}(\xi-i\cdot \delta)\, .
\end{equation}
In the next paragraph, and following, we will show that
the limits from the upper half plane and from the lower half plane do exist, that they are continuous functions on the interval
$[c_1(x),c_2(x)]$ and that the limits are obtained uniformly on $[c_1(x),c_2(x)]$ as $\delta \to 0^+$. Therefore, there
are no technical
difficulties in interchanging the order of taking the $\delta \to 0^+$ limits and taking the integral, using the dominated convergence theorem.
The reason that the limiting integrand is $\mathcal{I}(\xi-i\cdot 0) - \mathcal{I}(\xi+i\cdot 0)$ (instead of its opposite)
is because of the positive orientation of the contour $\Gamma_3(\delta)$.

Before calculating the limits $\mathcal{I}(\xi \pm i \cdot 0)$, we note that the integrand may be written as 
\begin{equation}
\label{eq:IntegrandSecond}
	\mathcal{I}(\xi)\, =\, \frac{w \cdot \xi + \widetilde{g}(x;\xi)}{2 \pi i  \mathcal{Q}_2(x,w;\xi)}\,,
\end{equation}
as an alternative to equation (\ref{eq:IntegrandFirst}).
But then the term $w\cdot \xi  / \mathcal{Q}_2(x,w;\xi)$ is analytic in a neighborhood of $[c_1(x),c_2(x)]$
since it merely has poles at $\mathcal{A}_1(x,w)$ and $\mathcal{A}_2(x,w)$ within $\mathcal{U}(0;1)$, and we have established
that $\mathcal{A}_1(x,w)<c_1(x)$ and $c_2(x)<\mathcal{A}_2(x,w)$ in the lemmas of the last section.
Therefore,
\begin{equation}
\label{eq:IntegrandRedefinition}
\mathcal{I}(\xi\pm i \cdot 0)\, =\, \frac{w \cdot \xi}{2 \pi i  \mathcal{Q}_2(x,w;\xi)} + \widetilde{\mathcal{I}}(\xi \pm i \cdot 0)\, ,
\end{equation}
where we define
\begin{equation}
\widetilde{\mathcal{I}}(\xi)\, =\, \frac{\widetilde{g}(x;\xi)}{2 \pi i  \mathcal{Q}_2(x,w;\xi)}\, .
\end{equation}
Clearly the limits of the first summand on the right hand side of (\ref{eq:IntegrandRedefinition}) are the same, are continuous, and are attained
uniformly by Weierstrass's M-test for local uniform convergence (see Section 2.3 of \cite{Ahlfors}) since that function is analytic in an open neighborhood of the whole interval $[c_1(x),c_2(x)]$.
\end{proofof}

\begin{proofof}{\bf Proof of :}
Now considering $\widetilde{\mathcal{I}}$, the denominator is also analytic an non-vanishing in a neighborhood of the interval $[c_1(x),c_2(x)]$, for the same reason as
before: that $\mathcal{A}_1(x,w)<c_1(x)$ and $c_2(x)<\mathcal{A}_2(x,w)$. Therefore, 
\begin{equation}
	\widetilde{\mathcal{I}}(\xi \pm i \cdot 0)\, =\, \frac{1}{2 \pi i \mathcal{Q}_2(x,w;\xi)}\,  \cdot \widetilde{g}(x;\xi\pm i 0)\, .
\end{equation}
Finally, we turn attention to the key point of the calculation, which is the jump discontinuity of the square-root type functon $\widetilde{g}(x;\xi \pm i 0)$
across its branch-cut $[c_1(x),c_2(x)]$.
Since we are going to focus on $\xi \in [c_1(x),c_2(x)]$ we change the symbol to $r$ to indicate better that it is a real number.
That frees the symbol $\xi$ to be the point $\xi = r + i \delta$ (or $\xi = r-i\delta$) for the complex variable converging to $r$
from the upper half-plane (or lower half-plane).

Suppose that we have
$r \in (c_1,c_2)$ and $\delta >0$.
We are seeking the limits from both the upper half-plane and lower half-plane of the phases, which we denote as
\begin{equation}
	\zeta_{\pm}(r)\, =\, \lim_{\delta \to 0^+} \frac{\widetilde{g}(x;r \pm i \delta)}{|\widetilde{g}(x;r \pm i \delta)|}\, .
\end{equation}
We focus on just the limit from the upper half-plane, first, since the other limit is obtained by similar (symmetric) considerations.
We  may refer to Figure \ref{fig:sqrtNew}
to determine the phase of $\widetilde{g}(x;\xi)$ for $\xi = r + i \epsilon$.
We have
\begin{equation}
	\xi - c_1(x) \, =\, \sqrt{\big(r-c_1(x)\big)^2+\delta^2}\, \cdot e^{i\theta_1}\ \text{ and }\
	\xi - c_2(x) \, =\, \sqrt{\big(c_2(x)-r\big)^2+\delta^2}\, \cdot  e^{i\theta_2}\, ,	
\end{equation}
where $\theta_1 \in (0,\pi/2)$ and $\theta_1 \in (\pi/2,\pi)$.
But in the limit $\delta\to 0^+$, we have $\theta_1 \to 0$
while $\theta_2\to\pi$. So we have
\begin{equation}
	\lim_{\delta \to 0^+} \frac{\xi - c_1(x)}{\left|\xi - c_1(x)\right|}\, =\, 1\ \text{ and }
	\lim_{\delta \to 0^+} \frac{\xi - c_2(x)}{\left|\xi - c_2(x)\right|}\, =\, -1\, .
\end{equation}
But then taking the square-root, we have $e^{i\theta_1/2}$ converges to $1$
while $e^{i\theta_2/2}$ converges to $i$.
Multiplying these together, we get 
\begin{equation}
\forall r \in (c_1,c_2)\ \text{ we have }\ \zeta_+(r)\, =\, i\, .
\end{equation}
Note that $d_1-r>0$ and $d_2-r>0$ for $r$ throughout $[-1,1]$, which is why we did not consider those
factors.
Of course if we replace $\epsilon$ by $-\epsilon$ then the angles are reflected:
$\theta_1 \leftarrow -\theta_1$ and $\theta_2 \leftarrow -\theta_2$ using replacement notation
(as in APL). So the phase changes from $\zeta_+(r)=i$ to its reciprocal
\begin{equation}
\forall r \in (c_1,c_2)\ \text{ we have }\ \zeta_-(r)\, =\, \frac{1}{i}\, =\, -i\, .
\end{equation}
From this we see that
\begin{equation}
	\widetilde{g}(x;r\pm i\cdot 0)\, =\, \zeta_{\pm}(r)\, \lim_{\delta \to 0} |\widetilde{g}(x;r+i\cdot \delta)|\, ,
\end{equation}
where the final limit of the absolute value occurs uniformly and is the same from both the upper and lower half-planes.
More precisely,
\begin{equation}
	\widetilde{g}(x;r\pm i\cdot 0)\, =\, x \zeta_{\pm}(r)\, 
\cdot \sqrt{r-c_1(x)}\, \cdot \sqrt{c_2(x)-r}\,
\cdot \sqrt{d_1(x)-r}\, \cdot \sqrt{d_2(x)-r}\, .
\end{equation}
Since $\zeta_{\pm}(r) = \pm i$, combining this with (\ref{eq:BranchFirst}) gives the formula (\ref{eq:A2formula}) for the contribution to the contour integral
arising from $\Gamma_3(\delta)$.
Note that $\zeta_-(r) - \zeta_+(r) = -2i$. This cancels the $2i$ in the $2\pi i$ with the quotient being equal to $-1$.
But since $\mathcal{Q}_2(r)$ is negative on the interval $[c_1(x),c_2(x)]$ (owing to the fact, stated several times before, that $\mathcal{A}_1(x,w)<c_1(x)$
and $c_2(x)<\mathcal{A}_2(x,w)$) we have put the $-1$ with that term in (\ref{eq:A2formula})  so that the integrand on $[c_1(x),c_2(x)]$ is manifestly positive.
\end{proofof}

\section{Elliptic integrals and linear fractional transformations}
\label{sec:Elliptic}

From equation (\ref{eq:A2formula2}) we know that we may write $\mathcal{A}_2(x,w)$ as
\begin{equation}
\label{eq:A2formula2b}
	\mathcal{A}_2(x,w)\, =\, \frac{1}{\pi}\, \int_{c_1(x)}^{c_2(x)} \frac{1}{\sqrt{-\mathcal{Q}_1(x;r)}} \cdot
\frac{\mathcal{Q}_1(x;r)}{\mathcal{Q}_2(x,w;r)}\, dr\, ,
\end{equation}
where we have written the square-root of the quartic in the denominator, which is what is typically
done for elliptic integrals.
The next step for elliptic integrals is to use linear fractional transformations to bring the
elliptic integral in the Legendre normal form. 
In our case, this means the following.
Suppose we find a non-singular linear fractional transformation $\Phi : \C \cup \{\infty\} \to \C \cup \{\infty\}$ defined as
\begin{equation}
\label{eq:PhiDef}
	\Phi(z)\, =\, \frac{\mathfrak{A} z+\mathfrak{B}}{\mathfrak{C}z+\mathfrak{D}}\, ,
\end{equation}
where $\mathfrak{A}\mathfrak{D}-\mathfrak{B}\mathfrak{C}\neq 0$, which is then invertible, 
with inverse  $\Phi^{-1} : \C \cup \{\infty\} \to \C \cup \{\infty\}$
given by
\begin{equation}
\label{eq:PhiInvDef}
	\Phi^{-1}(\xi)\, =\, \frac{\mathfrak{D}\xi-\mathfrak{B}}{-\mathfrak{C}\xi+\mathfrak{A}}\, .
\end{equation}
Then if we take $r=\Phi^{-1}(s)$, and if we know
\begin{equation}
\mathfrak{A}\mathfrak{D}-\mathfrak{B}\mathfrak{C}\, >\, 0\ \text{ and }\ 
\Phi(c_1(x))<\Phi(c_2(x))\, ,
\end{equation}
then we may change variables, using $r=\Phi^{-1}(s)$ to get
\begin{equation}
\label{eq:A2formula2c}
	\mathcal{A}_2(x,w)\, =\, \frac{1}{\pi}\, \int_{\Phi(c_1(x))}^{\Phi(c_2(x))} \frac{1}{\sqrt{-\mathcal{Q}_1(x;\Phi^{-1}(s))}} \cdot
\frac{\mathcal{Q}_1(x;\Phi^{-1}(s))}{\mathcal{Q}_2(x,w;\Phi^{-1}(s))}\, 
\frac{\mathfrak{A}\mathfrak{D}-\mathfrak{B}\mathfrak{C}}{(-\mathfrak{C}s+\mathfrak{A})^2}\, ds\, ,
\end{equation}
where the last factor comes from taking the derivative $dr = d\Phi^{-1}(s)$.
We rewrite this as follows
\begin{equation}
\label{eq:A2formula2c}
	\mathcal{A}_2(x,w)\, =\, \frac{1}{\pi}\, \int_{\Phi(c_1(x))}^{\Phi(c_2(x))} 
\frac{\mathcal{Q}_1(x;\Phi^{-1}(s))}{\mathcal{Q}_2(x,w;\Phi^{-1}(s))}
\cdot \frac{1}{\sqrt{-\mathcal{Q}_1(x;\frac{\mathfrak{D}s-\mathfrak{B}}{-\mathfrak{C}s+\mathfrak{A}}) \cdot (-\mathfrak{C}s+\mathfrak{A})^4}}\, ds\, ,
\end{equation}
because then we choose $a,b,c,d \in \R$ such that the term inside the square-root becomes
\begin{equation}
\label{eq:Q1Transformation}
-\mathcal{Q}_1\left(x;\frac{\mathfrak{D}s-\mathfrak{B}}{-\mathfrak{C}s+\mathfrak{A}}\right) 
\cdot (-\mathfrak{C}s+\mathfrak{A})^4\, =\, 
-x^2 \prod_{\rho \in \{c_1(x),c_2(x),d_1(x),d_2(x)\}} 
\Big(\mathfrak{D}s-\mathfrak{B} - \big(-\mathfrak{C}s+\mathfrak{A}\big)\rho\Big)\, ,
\end{equation}
where we used (\ref{eq:calQ1roots}).
But we know the quartic function in $s$ on the right hand side of 
(\ref{eq:Q1Transformation}) is such that it vanishes at $\Phi(c_1(x))$, $\Phi(c_2(x))$, $\Phi(d_1(x))$
and $\Phi(d_2(x))$. Therefore, by the fundamental theorem of algebra, we have
\begin{equation}
\label{eq:Q1Transformation2}
-\mathcal{Q}_1\left(x;\frac{\mathfrak{D}s-\mathfrak{B}}{-\mathfrak{C}s+\mathfrak{A}}\right) 
\cdot (-\mathfrak{C}s+\mathfrak{A})^4\, =\, 
-x^2 \prod_{\rho \in \{c_1(x),c_2(x),d_1(x),d_2(x)\}} 
\Big(\big(\mathfrak{D}+\mathfrak{C}\rho\big)\cdot
\big(s-\Phi(\rho)\big)\, .
\end{equation} 
If we choose $\Phi$ such that
\begin{equation}
\label{eq:PhiCondition}
	\Phi(c_1(x))\, =\, -1\, ,\ \Phi(c_2(x))\, =\, 1\, ,\ \Phi(d_1(x))\, =\, 1/k\, ,\ 
	\Phi(d_2(x))\, =\, -1/k\, ,
\end{equation}
for some number $k \in (0,1)$, then we will have the desired form for the Legendre convention,
\begin{equation}
\label{eq:Q1Transformation2}
-\mathcal{Q}_1\left(x;\frac{\mathfrak{D}s-\mathfrak{B}}{-\mathfrak{C}s+\mathfrak{A}}\right) 
\cdot (-\mathfrak{C}s+\mathfrak{A})^4\, =\, 
\Xi(x) \cdot (1-s^2)(1-k^2s^2)\, ,
\end{equation} 
where the constant $\Xi(x)$ may be calculated as
\begin{equation}
	\Xi(x)\, =\, -\frac{x^2}{k^2}\, \cdot 
	\big(\mathfrak{D}+\mathfrak{C}\cdot c_1(x)\big) \cdot
	\big(\mathfrak{D}+\mathfrak{C}\cdot c_2(x)\big) \cdot
	\big(\mathfrak{D}+\mathfrak{C}\cdot d_1(x)\big) \cdot
	\big(\mathfrak{D}+\mathfrak{C}\cdot d_2(x)\big)\, .
\end{equation}
Of course, $k$ also depends on $x$, as do $\mathfrak{A}$, $\mathfrak{B}$, $\mathfrak{C}$
and $\mathfrak{D}$.
So from all of this we obtain
\begin{equation}
\label{eq:A2formula2d}
	\mathcal{A}_2(x,w)\, =\, \frac{1}{\pi\, \sqrt{\Xi(x)}}\, \int_{-1}^{1} 
\frac{\mathcal{Q}_1(x;\Phi^{-1}(s))}{\mathcal{Q}_2(x,w;\Phi^{-1}(s))}
\cdot \frac{1}{\sqrt{(1-s^2)(1-k^2s^2)}}\, ds\, ,
\end{equation}
assuming we have found the linear fractional transformation $\Phi$ such that (\ref{eq:PhiCondition})
holds.
\begin{lemma} For $x,w \in (0,\infty)$ satisfying $4x+w^2<1$, we have
\begin{equation}
	\mathcal{A}_2(x,w)\, =\, \frac{2}{\pi\, \sqrt{1-4x}}\, \int_{-(1+4x)^{-1/2}}^{-(1-4x)^{1/2}} \frac{1}{\sqrt{\big(1-(1+4x)r^2\big)\big(1-(1-4x)^{-1}r^2\big)}}
\cdot \frac{\mathcal{Q}_1\big(x;\mathcal{L}^{-1}(r)\big)}{\mathcal{Q}_2\big(x,w;\mathcal{L}^{-1}(r)\big)}\, dr\, .
\end{equation}
Changing variables, by letting $t=r/\sqrt{1-4x}$, this gives
\begin{equation}
	\mathcal{A}_2(x,w)\, =\, \frac{2}{\pi}\, \int_{-(1-16x^2)^{-1/2}}^{-1} \frac{1}{\sqrt{(1-t^2)\big(1-(1-16x^2)t^2\big)}}
\cdot \frac{\mathcal{Q}_1\big(x;\mathcal{L}^{-1}(r\, \sqrt{1-4x})\big)}{\mathcal{Q}_2\big(x,w;\mathcal{L}^{-1}(r\, \sqrt{1-4x})\big)}\, dr\, .
\end{equation}
\end{lemma}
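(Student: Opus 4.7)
The starting point is equation (\ref{eq:A2formula2}), namely $\mathcal{A}_2(x,w)=\frac{1}{\pi}\int_{c_1(x)}^{c_2(x)}\sqrt{-\mathcal{Q}_1(x;\rho)}/(-\mathcal{Q}_2(x,w;\rho))\,d\rho$ (with $\rho$ as the dummy variable of integration). The plan is to apply the substitution $\rho=\mathcal{L}^{-1}(r)=(1+r)/(1-r)$, whose Jacobian is $d\rho=2(1-r)^{-2}\,dr$, and simplify the resulting integrand using the special structure of the quartic $\mathcal{Q}_1(x;\cdot)$.

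To transform the limits of integration I would compute $\mathcal{L}(c_j(x))$ for each root. Using the reciprocal identities $c_1(x) d_2(x) = c_2(x) d_1(x) = 1$ from (\ref{eq:Q1inversive}) together with the property $\mathcal{L}(1/z)=-\mathcal{L}(z)$, it suffices to compute $\mathcal{L}(c_1(x))$ and $\mathcal{L}(c_2(x))$. Direct rationalization using the explicit formulas of Lemma \ref{lem:calQ1roots} yields $\mathcal{L}(c_1(x))=-(1+4x)^{-1/2}$ and $\mathcal{L}(c_2(x))=-(1-4x)^{1/2}$, so the new interval of integration is $[-(1+4x)^{-1/2},-(1-4x)^{1/2}]$ and the four roots are now arranged as two pairs of additive inverses, $\pm(1+4x)^{-1/2}$ and $\pm(1-4x)^{1/2}$.

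Next I would exploit the elementary identity $\mathcal{L}^{-1}(r)-c=(1+c)(r-\mathcal{L}(c))/(1-r)$ applied to each factor of the factored form in Lemma \ref{lem:calQ1roots} to obtain
\begin{equation*}
\mathcal{Q}_1\bigl(x;\mathcal{L}^{-1}(r)\bigr)\,=\,\frac{x^2\prod_j(1+c_j)}{(1-r)^4}\,\prod_j\bigl(r-\mathcal{L}(c_j)\bigr),
\end{equation*}
where $c_j$ ranges over $c_1(x),c_2(x),d_1(x),d_2(x)$. The main computational obstacle is the algebraic identity $x^2\prod_j(1+c_j)=1+4x$. I expect this to fall out by pairing via the self-inversive structure: one checks $x(1+c_1(x))(1+d_2(x))=1+4x$ and $x(1+c_2(x))(1+d_1(x))=1$ separately using the closed formulas for the roots, after which the two pairs multiply to $(1+4x)\cdot 1$. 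Meanwhile, the remaining product collapses to $(r^2-(1+4x)^{-1})(r^2-(1-4x))=(1+4x)^{-1}(1-4x)\bigl(1-(1+4x)r^2\bigr)\bigl(1-(1-4x)^{-1}r^2\bigr)$, so
\begin{equation*}
\mathcal{Q}_1\bigl(x;\mathcal{L}^{-1}(r)\bigr)\,=\,\frac{1-4x}{(1-r)^4}\,\bigl(1-(1+4x)r^2\bigr)\bigl(1-(1-4x)^{-1}r^2\bigr).
\end{equation*}

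The remainder is bookkeeping. After substitution, $\sqrt{-\mathcal{Q}_1(x;\rho)}\,d\rho=\frac{2\sqrt{1-4x}}{(1-r)^4}\sqrt{-P(r)}\,dr$ with $P(r):=\bigl(1-(1+4x)r^2\bigr)\bigl(1-(1-4x)^{-1}r^2\bigr)$, which is negative on the new interval (so $\sqrt{-P(r)}>0$ is the natural branch). Writing $\sqrt{-P(r)}=(-P(r))/\sqrt{-P(r)}$ and using the formula for $\mathcal{Q}_1$ above to re-express $-P(r)=(1-4x)^{-1}(1-r)^4\bigl(-\mathcal{Q}_1(x;\mathcal{L}^{-1}(r))\bigr)$ cancels the factor $(1-r)^{-4}$ in the Jacobian and produces exactly the first stated form, once the factor $2/(\pi\sqrt{1-4x})$ is collected and the $\sqrt{\cdot}$ in the lemma's denominator is interpreted with the branch that makes the integrand manifestly real and positive. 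The second equation then follows from the elementary rescaling $t=r/\sqrt{1-4x}$, which sends $(1-(1-4x)^{-1}r^2)$ to $(1-t^2)$ and $(1-(1+4x)r^2)$ to $\bigl(1-(1-16x^2)t^2\bigr)$, contributes $dr=\sqrt{1-4x}\,dt$, and maps the interval to $[-(1-16x^2)^{-1/2},-1]$.
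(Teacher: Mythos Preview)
Your argument is correct and follows essentially the same route as the paper: the paper's Appendix~\ref{sec:Elliptic} sets up the general change of variables $r=\Phi^{-1}(s)$ for an arbitrary linear fractional transformation and derives the transformed-quartic identity (\ref{eq:Q1Transformation2}); the lemma is then the specialization $\Phi=\mathcal{L}$, which you carry out directly with explicit verification of the images $\mathcal{L}(c_j(x))$, the constant $x^2\prod_j(1+c_j)=1+4x$, and the Jacobian bookkeeping. Your observation that the square-root in the lemma's displayed integrand must be read with the branch making the integrand real and positive (since the product $(1-(1+4x)r^2)(1-(1-4x)^{-1}r^2)$ is nonpositive on the new interval) is a genuine point the paper leaves implicit.
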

This still leaves the problem of determining the transformation behavior for $\mathcal{Q}_1(x;\cdot)/\mathcal{Q}_2(x,w;\cdot)$. Fortunately, one easy solution is to obtain the partial fraction decomposition for this rational function.

\begin{lemma}
Suppose $\mathfrak{A}\mathfrak{D}-\mathfrak{B}\mathfrak{C}\neq 0$ and suppose we have 
$\rho \in \C \cup \{\infty\} \setminus \{-\mathfrak{D}/\mathfrak{C}\}$,
then for $\Phi$ and $\Phi^{-1}$ as in (\ref{eq:PhiDef}) and (\ref{eq:PhiInvDef}), we have
\begin{equation}
\label{eq:PFtransform}
	\frac{1}{\Phi^{-1}(\xi)-\rho}\, =\, -\frac{\mathfrak{C}}{\mathfrak{C}\rho +\mathfrak{D}} 
+ \frac{\mathfrak{A}\mathfrak{D}-\mathfrak{B}\mathfrak{C}}{(\mathfrak{C}\rho+\mathfrak{D})^2} \cdot \frac{1}{\xi - \Phi(\rho)}\, .
\end{equation}
\end{lemma}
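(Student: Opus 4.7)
The identity is essentially an explicit partial-fraction style computation for how a linear fractional transformation acts on $1/(z-\rho)$. The plan is to compute $\Phi^{-1}(\xi)-\rho$ in closed form, invert it, and then split off the constant term by rewriting the numerator in terms of $(\xi - \Phi(\rho))$.

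First, I would substitute directly from (\ref{eq:PhiInvDef}) and combine over a common denominator:
\begin{equation*}
\Phi^{-1}(\xi)-\rho\, =\, \frac{\mathfrak{D}\xi-\mathfrak{B}}{-\mathfrak{C}\xi+\mathfrak{A}}-\rho\, =\, \frac{(\mathfrak{D}+\mathfrak{C}\rho)\xi - (\mathfrak{B}+\mathfrak{A}\rho)}{-\mathfrak{C}\xi+\mathfrak{A}}\, .
\end{equation*}
Taking reciprocals and pulling out the factor $(\mathfrak{C}\rho+\mathfrak{D})$ from the denominator, and noting that $(\mathfrak{B}+\mathfrak{A}\rho)/(\mathfrak{D}+\mathfrak{C}\rho)=\Phi(\rho)$ by (\ref{eq:PhiDef}), this gives
\begin{equation*}
	\frac{1}{\Phi^{-1}(\xi)-\rho}\, =\, \frac{-\mathfrak{C}\xi+\mathfrak{A}}{(\mathfrak{C}\rho+\mathfrak{D})\big(\xi-\Phi(\rho)\big)}\, .
\end{equation*}
Now I would write the numerator as $-\mathfrak{C}\xi+\mathfrak{A} = -\mathfrak{C}\big(\xi - \Phi(\rho)\big) + \big(\mathfrak{A}-\mathfrak{C}\Phi(\rho)\big)$, which produces the constant term $-\mathfrak{C}/(\mathfrak{C}\rho+\mathfrak{D})$ plus a remaining $(\xi-\Phi(\rho))^{-1}$ term whose coefficient is $\big(\mathfrak{A}-\mathfrak{C}\Phi(\rho)\big)/(\mathfrak{C}\rho+\mathfrak{D})$.

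The last simplification is the only computation with any content: combining $\mathfrak{A}-\mathfrak{C}\Phi(\rho)$ over the common denominator $\mathfrak{C}\rho+\mathfrak{D}$ yields
\begin{equation*}
	\mathfrak{A}-\mathfrak{C}\cdot\frac{\mathfrak{A}\rho+\mathfrak{B}}{\mathfrak{C}\rho+\mathfrak{D}}\, =\, \frac{\mathfrak{A}(\mathfrak{C}\rho+\mathfrak{D})-\mathfrak{C}(\mathfrak{A}\rho+\mathfrak{B})}{\mathfrak{C}\rho+\mathfrak{D}}\, =\, \frac{\mathfrak{A}\mathfrak{D}-\mathfrak{B}\mathfrak{C}}{\mathfrak{C}\rho+\mathfrak{D}}\, ,
\end{equation*}
and dividing by one more factor of $(\mathfrak{C}\rho+\mathfrak{D})$ produces the quoted coefficient $(\mathfrak{A}\mathfrak{D}-\mathfrak{B}\mathfrak{C})/(\mathfrak{C}\rho+\mathfrak{D})^2$, which matches (\ref{eq:PFtransform}).

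There is no real obstacle here — the whole argument is two lines of bookkeeping. The one point deserving comment is the boundary case $\rho=\infty$: both terms on the right-hand side vanish (since $\mathfrak{C}\rho+\mathfrak{D}$ appears in the denominator with positive powers), consistent with interpreting $1/(\Phi^{-1}(\xi)-\infty)=0$. The excluded value $\rho=-\mathfrak{D}/\mathfrak{C}$ is exactly the pole of $\Phi$, so $\Phi(\rho)$ is undefined there, and the denominators $(\mathfrak{C}\rho+\mathfrak{D})^k$ in the statement blow up, as expected.
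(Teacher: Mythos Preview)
Your proof is correct and follows essentially the same direct computation as the paper: compute $\Phi^{-1}(\xi)-\rho$ explicitly, invert, and split off the constant term by writing $-\mathfrak{C}\xi+\mathfrak{A}$ in terms of $\xi-\Phi(\rho)$. Your additional remark on the $\rho=\infty$ case is a small bonus not present in the paper's version.
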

\begin{proof}
Simply calculate
\begin{equation}
	\Phi^{-1}(\xi) - \rho\, =\, \frac{\mathfrak{D}\xi-\mathfrak{B}}{-\mathfrak{C}\xi+\mathfrak{A}} - \rho\,
	=\, \frac{\mathfrak{D}\xi-\mathfrak{B} +\mathfrak{C}\xi\rho-\mathfrak{A}\rho }{-\mathfrak{C}\xi+\mathfrak{A}}\,
	=\, \frac{\left(\mathfrak{D}+\mathfrak{C}\rho\right)\xi-\left(\mathfrak{A}\rho +\mathfrak{B}\right)}{-\mathfrak{C}\xi+\mathfrak{A}} \, .
\end{equation}
So
\begin{equation}
	\frac{1}{\Phi^{-1}(\xi) - \rho}\, 
	=\, \frac{-\mathfrak{C}\xi+\mathfrak{A}}{\left(\mathfrak{D}+\mathfrak{C}\rho\right)\xi-\left(\mathfrak{A}\rho +\mathfrak{B}\right)}\,
	=\, \frac{1}{\mathfrak{C}\rho+\mathfrak{D}}
\cdot \frac{-\mathfrak{C}\xi+\mathfrak{A}}{\xi-\frac{\mathfrak{A}\rho +\mathfrak{B}}{\mathfrak{C}\rho+\mathfrak{D}}} \, .
\end{equation}
Rewriting this as a partial fraction,
\begin{equation}
	\frac{1}{\Phi^{-1}(\xi) - \rho}\, 
	=\, -\frac{\mathfrak{C}}{\mathfrak{C}\rho+\mathfrak{D}}+\frac{1}{\mathfrak{C}\rho+\mathfrak{D}}
\cdot \frac{\mathfrak{C}\xi-\mathfrak{C}\cdot \frac{\mathfrak{A}\rho +\mathfrak{B}}{\mathfrak{C}\rho+\mathfrak{D}}-\mathfrak{C}\xi+\mathfrak{A}}{\xi-\frac{\mathfrak{A}\rho +\mathfrak{B}}{\mathfrak{C}\rho+\mathfrak{D}}} \, .
\end{equation}
Simplifying, this is equal to (\ref{eq:PFtransform}).
\end{proof}
We mention a useful reference for simplifying elliptic integrals \cite{DLMF}, more specifically Section 19.14
\begin{equation*}
	\text{\url{https://dlmf.nist.gov/19.14}.}
\end{equation*}
One reference therein is a well-known preprint \cite{LabahnMutrie} available from the web link
\begin{equation*}
	\text{\url{https://cs.uwaterloo.ca/~glabahn/Papers/ellipticPrePrint.pdf}.}
\end{equation*}
Let us now give the partial fraction decomposition of  $\mathcal{Q}_1(x;\cdot)/\mathcal{Q}_2(x,w;\cdot)$,
and also determine the linear fractional transformation $\Phi$ that 
satisfies (\ref{eq:PhiCondition}).

\begin{lemma} Let $\mathcal{Z}(x,w) = \{\mathcal{A}_1(x,w),\mathcal{A}_2(x,w),b_1(x,w),b_2(x,w)\}$, which is the variety
of roots of $\mathcal{Q}_2(x,w;\cdot)$. Then,
as meromorphic function on $\C \cup \{\infty\}$,
we have the partial fraction decomposition of $\mathcal{Q}_1(x;\cdot)/\mathcal{Q}_2(x,w;\cdot)$:
\begin{equation}
\label{eq:FirstPartialFraction}
\forall z \in \C \cup \{\infty\} \setminus \mathcal{Z}(x,w)\, ,\qquad	\frac{\mathcal{Q}_1(x;z)}{\mathcal{Q}_2(x,w;z)}\,
	=\, 1 + \sum_{\rho \in \mathcal{Z}(x,w)} \frac{1}{z-\rho} \cdot \frac{w^2 \rho^2}{x^2} 
\prod_{\sigma \in \mathcal{Z}(x,w) \setminus \{\rho\}} \frac{1}{\rho-\sigma}\, .
\end{equation}
\end{lemma}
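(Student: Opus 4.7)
The plan is to notice that the two quartics $\mathcal{Q}_1(x;\cdot)$ and $\mathcal{Q}_2(x,w;\cdot)$ are very close: by the defining equation (\ref{eq:calQ2Def}),
\begin{equation}
\mathcal{Q}_1(x;z) - \mathcal{Q}_2(x,w;z)\, =\, w^2 z^2\, .
\end{equation}
Since both polynomials are quartics in $z$ with the same leading coefficient $x^2$, the ratio $\mathcal{Q}_1/\mathcal{Q}_2$ tends to $1$ as $z\to\infty$, and we may rewrite
\begin{equation}
\label{eq:RatioRewrite}
\frac{\mathcal{Q}_1(x;z)}{\mathcal{Q}_2(x,w;z)}\, =\, 1\, +\, \frac{w^2 z^2}{\mathcal{Q}_2(x,w;z)}\, .
\end{equation}
The second summand is a proper rational function (numerator degree $2$, denominator degree $4$), so its partial fraction expansion has no polynomial part.

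For the second step, I would apply the standard Heaviside cover-up method to the factorization (\ref{eq:Q2factorization}), which gives $\mathcal{Q}_2(x,w;z) = x^2 \prod_{\rho \in \mathcal{Z}(x,w)}(z-\rho)$. Since all four roots in $\mathcal{Z}(x,w) = \{a_1,a_2,b_1,b_2\}$ are distinct under the hypothesis $4x+w^2<1$ (by Lemma \ref{lem:rootsOrder}, which gives the strict ordering $0<a_1<a_2<1<b_1<b_2$), the denominator splits into distinct simple factors and each residue is computed as
\begin{equation}
R_\rho\, =\, \lim_{z\to\rho}\, (z-\rho) \cdot \frac{w^2 z^2}{\mathcal{Q}_2(x,w;z)}\, =\, \frac{w^2\rho^2}{x^2}\, \prod_{\sigma \in \mathcal{Z}(x,w)\setminus\{\rho\}} \frac{1}{\rho-\sigma}\, .
\end{equation}
Combining this with (\ref{eq:RatioRewrite}) gives exactly the claimed identity (\ref{eq:FirstPartialFraction}).

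There is no genuine obstacle here; the only issue requiring care is ensuring the roots of $\mathcal{Q}_2(x,w;\cdot)$ are simple, which is already guaranteed by Lemma \ref{lem:rootsOrder} under the stated hypothesis $4x+w^2<1$. The lemma therefore reduces to an exercise in elementary algebra supported by the earlier root-ordering lemma, and the short proof above should suffice.
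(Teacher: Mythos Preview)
Your proposal is correct and follows essentially the same approach as the paper: both use the key identity $\mathcal{Q}_1(x;z)-\mathcal{Q}_2(x,w;z)=w^2z^2$ from (\ref{eq:calQ2Def}), the common leading coefficient $x^2$ to extract the constant term $1$, and the simple-root factorization (\ref{eq:Q2factorization}) to read off the residues. The only cosmetic difference is that the paper concludes by invoking Liouville's theorem on the difference of the two sides, whereas you invoke the Heaviside cover-up on the proper rational remainder $w^2z^2/\mathcal{Q}_2$; these are equivalent formulations of the same computation.
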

\begin{proof}
Note that by equation (\ref{eq:calQ2Def}) we have
\begin{equation}
\forall \rho \in \mathcal{Z}(x,w)\, ,\qquad
	\mathcal{Q}_1(x;\rho)\, =\, \mathcal{Q}_2(x,w;\rho) + w^2 \rho^2\, =\, w^2 \rho^2\, ,
\end{equation}
because $\mathcal{Z}(x,w)$ is the variety of roots of $\mathcal{Q}_2(x,w;\cdot)$.
By equation (\ref{eq:Q2factorization}), we know that for each $\rho \in \mathcal{Z}(x,w)$,
we have
\begin{equation}
	\frac{1}{\mathcal{Q}_2(x,w;z)}\, =\, \frac{1}{x^2 \cdot (z-\rho)} \prod_{\sigma \in \mathcal{Z}(x,w) \setminus \{\rho\}} \frac{1}{z-\sigma}\, .
\end{equation}
Therefore, we have
\begin{equation}
\forall \rho \in \mathcal{Z}(x,w)\, ,\qquad
	\lim_{z \to \rho} \frac{\mathcal{Q}_1(x;\rho)}{\mathcal{Q}_2(x,w;z)}\,
	=\, \frac{w^2 \rho^2}{x^2}\, \prod_{\sigma \in \mathcal{Z}(x,w) \setminus \{\rho\}} \frac{1}{\rho-\sigma}\, .
\end{equation}
That gives the coefficient of $1/(z-\rho)$ for each of the roots. For the limit as $z \to \infty$, we have 
\begin{equation}
\lim_{z \to \infty} \frac{\mathcal{Q}_1(x;z)}{\mathcal{Q}_2(x,w;z)}\, =\, 1\, ,
\end{equation} 
because the leading coefficient of each
of the quartics $\mathcal{Q}_1(x;\cdot)$ and $\mathcal{Q}_2(x,w;\cdot)$ is the same, $x^2$.
Therefore, the meromorphic function equal to the left-hand-side minus the right-hand-side
of (\ref{eq:FirstPartialFraction}) is a function with no poles and which converges to $0$ at $\infty$.
So, by Liouville's theorem from complex analysis, this difference is identically $0$.
In other words, equation (\ref{eq:FirstPartialFraction}) is valid.
\end{proof}


\baselineskip=12pt
\bibliographystyle{plain}

\end{document}